\newtheorem{thm}{Theorem}[section]
\newtheorem{lemma}[thm]{Lemma}
\newtheorem{theorem}[thm]{Theorem}
\newtheorem{cor}[thm]{Corollary}
\newtheorem{prop}[thm]{Proposition}
\newtheorem{notation}[thm]{Notation}
\newtheorem{remark}[thm]{Remark}
\newtheorem{remarks}[thm]{Remarks}
\newtheorem{de}[thm]{Definition}
\newtheorem{example}[thm]{Example}
\newcommand{\PG}{{\rm PG}}
\newcommand{\Gauss}[2]{{\begin{bmatrix} #1 \\ #2 \end{bmatrix}_q}}
\newcommand{\allones}{\mathds{1}}
\newcommand{\charvec}{\mathds{1}}
\newcommand{\ff}{\mathbb{F}}
\newcommand{\EKR}{\mathcal{F}}
\renewcommand{\le}{\leqslant}
\renewcommand{\leq}{\leqslant}
\renewcommand{\ge}{\geqslant}
\renewcommand{\geq}{\geqslant}
\newtheorem{result}[thm]{Result}
\DeclareMathOperator{\CC}{{\mathcal C}}
\newcommand{\gauss}[2]{{#1\brack #2}}
\title[Maximum Erdős-Ko-Rado sets of chambers and their antidesigns]{Maximum Erd\H{o}s-Ko-Rado sets of chambers and their antidesigns in vector-spaces of even dimension}
\author[Heering]{Philipp Heering$^1$}
\author[Lansdown]{Jesse Lansdown$^2$}
\author[Metsch]{Klaus Metsch$^1$}
\address{$^1$Justus-Liebig-Universität, Mathematisches Institut, Arndtstraße 2, D-35392 Gießen, Germany
}
\address{$^2$School of Mathematics and Statistics, University of Canterbury, Christchurch, New Zealand}
\email{philipp.heering@math.uni-giessen.de}
\email{jesse.lansdown@canterbury.ac.nz}
\email{klaus.metsch@math.uni-giessen.de}
\begin{document}

\maketitle

\begin{abstract}
A chamber of the vector space $\ff_q^n$ is a set $\{S_1,\dots,S_{n-1}\}$ of subspaces of $\ff_q^n$ where $S_1\subset S_2\subset\hdots\subset S_{n-1}$ and $\dim(S_i)=i$ for $i=1,\dots,n-1$. By $\Gamma_n(q)$ we denote the graph whose vertices are the chambers of $\ff_q^n$ with two chambers $C_1=\{S_1,\dots,S_{n-1}\}$ and $C_2=\{T_1,\dots,T_{n-1}\}$ adjacent in $\Gamma_n(q)$, if $S_i\cap T_{n-i}=\{0\}$ for $i=1,\dots,n-1$.
The Erd\H{o}s-Ko-Rado problem on chambers is equivalent to determining the structure of independent sets of $\Gamma_n(q)$.
The independence number of this graph was determined in \cite{AlgebraicApproach} for $n$ even and given a subspace $P$ of dimension one, the set of all chambers whose subspaces of dimension $\frac n2$ contain $P$ attains the bound. The dual example of course also attains the bound. It remained open in \cite{AlgebraicApproach} whether or not these are all maximum independent  sets. Using a description from \cite{PreprintJanSamKlaus} of the eigenspace for the smallest eigenvalue of this graph, we prove an Erd\H{o}s-Ko-Rado theorem on chambers of $\ff_q^n$ for sufficiently large $q$, giving an affirmative answer for n even.
\end{abstract}

%\textbf{Keywords:}  Erd\H{o}s-Ko-Rado problem, q-analog of generalized Kneser graph, structure of maximal independent set

%\textbf{MSC(2020):} 
%05C69, %Dominating sets, independent sets, cliques
%05C35, % Extremal problems in graph theory

\section{Introduction}

Motivated by Erd\H{o}s, Ko, and Rado's fundamental result on maximum intersecting sets \cite{OriginalEKR}, an entire subfield of combinatorics has developed which seeks to answer the following questions in different settings: \textit{Given a suitable notion of ``intersection'', how large can a set of intersecting objects be? What is their structure?} Results of this type are referred to as \emph{Erd\H{o}s-Ko-Rado} theorems (or simply \emph{EKR} theorems) and have been considered for 
blocks in designs \cite{EKR_blocks_in_designs, An_extension_of_the_EKR_theorem_to_t-designs},
multisets  \cite{EKR_multisets}, 
integer sequences {\cite{EKR_integer_sequences}, 
 geometries \cite{Theorems_of_EKR_type_in_geometrical_settings},
and even permutations \cite{Intersection_theorems_in_permutation_groups,Intersecting_families_of_permutations}.
For an overview of the subject, see the recent survey by Ellis \cite{EllisSurvey3} or the book by Godsil and Meagher \cite{GodsilMeagher}.

One of the most natural and significant ways to generalize results on sets is to instead consider vector spaces over finite fields, where $k$-subsets become subspaces of dimension $k$. 
Indeed, the properties of sets often behave like the limit case of the vector space analogue. 
An EKR theorem for vector spaces was proved by Frankl and Wilson \cite{ekr_vectorspaces}.
More recently, interest has developed around EKR theorems for chambers of subspaces in finite vector spaces. Here the term \emph{chamber} is borrowed from the theory of buildings and refers to chains of contained subspaces.

More formally, let $V$ be a vector space of finite dimension $d$, then a \emph{flag} of $V$ is a set $F$ of subspacesof $V$ (other than $\{0\}$ and $V$) such that $U\le W$ or $W\le U$ for all $U,W\in F$ and $\{\dim(U)\mid U\in F\}$ is the \emph{type} of $F$. A \emph{chamber} $C$ of $V$ is a flag of type $\{1,\dots,d-1\}$ and we denote its $i$-subspace by $C_i$. Two chambers $C$ and $D$ of $V$ are \emph{opposite} if $C_i\cap D_{d-i}=\{0\}$ for $1\le i\le d-1$. An \emph{Erd\H{o}s-Ko-Rado set} (or \emph{EKR-set}) is a set of chambers for which no two are opposite. The following constructions in Example \ref{ex:classical} yield large EKR-sets, which we shall refer to as \emph{classical}.

\begin{example}\label{ex:classical}
Let $P$ be any $1$-dimensional subspace of $\ff_q^{2n}$, then an EKR-set is formed by the set of chambers whose $n$-dimensional subspace contains $P$. Dually, let $S$ be any $(2n-1)$-dimensional subspace of $\ff_q^{2n}$, then an EKR-set is formed by the set of chambers whose $n$-dimensional subspace is contained in $S$.
\end{example}

By taking the chambers as the vertices of a graph and adjacency as oppositeness, we obtain the \emph{Kneser graph on chambers}. Clearly an EKR-set is then a coclique in this graph, mirroring the original EKR problem in which an EKR-set is a coclique of the 
Kneser graph.
By computing the eigenvalues of the Kneser graph on chambers and applying the Delsarte-Hoffman bound, an upper bound on the size of an EKR-set was obtained in \cite{AlgebraicApproach}.
For odd-dimensional vector spaces no examples meeting the bound are known, however for even-dimension the classical examples meet the bound. Thus, we have the following important theorem on the size of a maximum EKR-set of chambers in a vector space of even dimension.

\begin{thm}[{\cite[Theorem 3.1]{AlgebraicApproach}}]\label{thm:maxsize}
Let $\EKR$ be an EKR-set of chambers of maximum size in $\ff_q^{2n}$, then
\[
|\EKR|=\frac{(q^{2n}-1)(q^{2n-1}-1)\ldots (q^{2}-1)(q-1)}{(q-1)^{2n}(q^n+1)}.
\]
\end{thm}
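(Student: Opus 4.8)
The plan is to translate the problem into spectral graph theory and apply the Delsarte–Hoffman (ratio) bound to the Kneser graph $\Gamma := \Gamma_{2n}(q)$ on chambers, in which two chambers are adjacent exactly when they are opposite. An EKR-set is precisely an independent set of $\Gamma$, so $|\EKR|$ for a maximum EKR-set is the independence number $\alpha(\Gamma)$. Since $GL_{2n}(q)$ acts transitively on chambers and preserves oppositeness, $\Gamma$ is regular; its valency $k$ is the number of chambers in general position with a fixed chamber, i.e.\ the size of the big Bruhat cell, which equals $k = q^{\binom{2n}{2}} = q^{n(2n-1)}$. The number of vertices is the number of complete flags of $\ff_q^{2n}$, namely $N = \prod_{i=1}^{2n}\frac{q^i-1}{q-1}$; note that this is exactly the numerator $(q^{2n}-1)\cdots(q^2-1)(q-1)$ of the claimed expression divided by $(q-1)^{2n}$. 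As $\Gamma$ is regular with symmetric (hence real-diagonalizable) adjacency matrix, Hoffman's bound yields $\alpha(\Gamma)\le N\cdot\frac{-\lambda_{\min}}{k-\lambda_{\min}}$, where $\lambda_{\min}$ is the least eigenvalue of $\Gamma$.

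The crux is to compute $\lambda_{\min}$, and this is where I expect the real work and the main obstacle to lie. First I would place $\Gamma$ inside the association scheme on chambers whose relations record the relative position of a pair of chambers --- an element of the Weyl group $S_{2n}$; its adjacency algebra is the Iwahori–Hecke algebra of type $A_{2n-1}$, and oppositeness is the relation of the longest element. The eigenvalues of $\Gamma$ are then the eigenvalues of the (suitably normalised) oppositeness element acting in the irreducible representations of this Hecke algebra; each such eigenvalue is, up to sign, a power of $q$ extractable from the irreducible Hecke characters. The delicate point is that the chamber permutation module is far from multiplicity-free, so the oppositeness operator is not merely a scalar on each $GL_{2n}(q)$-isotypic component --- one must work inside the (non-commutative) multiplicity spaces and then identify the most negative eigenvalue rather than merely estimate it. Carrying this out, one finds $\lambda_{\min} = -q^{2n(n-1)} = -k/q^n$.

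With $\lambda_{\min} = -k/q^n$ the ratio bound simplifies cleanly, since $\frac{k-\lambda_{\min}}{-\lambda_{\min}} = \frac{k + k/q^n}{k/q^n} = q^n+1$, giving $\alpha(\Gamma)\le N/(q^n+1)$, which is precisely the asserted formula. It then remains to prove this bound is attained. For that I invoke the classical construction of Example \ref{ex:classical}: fixing a $1$-dimensional subspace $P$ and taking all chambers $C$ with $P\le C_n$ gives an EKR-set, because two such chambers $C,D$ satisfy $C_n\cap D_n\supseteq P\ne\{0\}$, so the opposition condition at $i=n$ fails and no two are opposite. A direct count gives the size of this family as $\Gauss{2n-1}{n-1}\big(\prod_{i=1}^{n}\tfrac{q^i-1}{q-1}\big)^2$, and a one-line manipulation using $q^{2n}-1=(q^n-1)(q^n+1)$ shows this equals $N/(q^n+1)$. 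Hence the upper and lower bounds coincide, establishing the stated value of $|\EKR|$.
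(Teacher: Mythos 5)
Your overall route---model EKR-sets as independent sets of the oppositeness graph $\Gamma_{2n}(q)$, apply the Delsarte--Hoffman ratio bound, and show the classical family of Example \ref{ex:classical} attains it---is exactly the route of the proof in \cite{AlgebraicApproach}, which is what this paper cites for Theorem \ref{thm:maxsize} rather than reproving it. Everything in your write-up that can be checked is correct: the vertex count $N=z_{2n}(q)=\prod_{i=1}^{2n}\frac{q^i-1}{q-1}$ (Lemma \ref{basicnumbers}(b)), the valency $k=q^{\binom{2n}{2}}=q^{n(2n-1)}$ (Lemma \ref{basicnumbers}(d)), the simplification $\frac{-\lambda_{\min}}{k-\lambda_{\min}}=\frac{1}{q^n+1}$ once one knows $\lambda_{\min}=-k/q^n=-q^{2n(n-1)}$, and the attainment count: the classical family has size $\gauss{2n-1}{n-1}z_n(q)^2$, which equals $z_{2n}(q)/(q^n+1)$ because $\gauss{2n}{n}=(q^n+1)\gauss{2n-1}{n-1}$ and $z_{2n}=\gauss{2n}{n}z_n^2$ (Lemma \ref{basicnumbers}(c)); the verification that no two chambers through $P$ are opposite is also right.

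The genuine gap is at the step you yourself flag as the crux: the identity $\lambda_{\min}=-q^{2n(n-1)}$ is asserted (``carrying this out, one finds'') rather than proved. Your framework for it is the correct one---the relations on pairs of chambers are indexed by the Weyl group $S_{2n}$, the adjacency algebra is the Iwahori--Hecke algebra of type $A_{2n-1}$, the eigenvalues of the oppositeness operator are signed powers of $q$, and since the scheme is neither symmetric nor commutative one must work inside multiplicity spaces rather than isotypic components---but actually identifying which signed power of $q$ occurs on each irreducible constituent of the chamber module, and that the most negative one is $-q^{2n(n-1)}$, is precisely the content of the cited theorem in \cite{AlgebraicApproach}; nothing in your argument establishes it, and it is not a routine verification. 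Without it the upper bound $\alpha(\Gamma_{2n}(q))\le N/(q^n+1)$ is unsupported, so as a self-contained proof the proposal is incomplete: it reconstructs the cited proof's architecture correctly but leaves its hard kernel as a black box.
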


For $n=2$ an EKR theorem is given in \cite{heeringmetsch2023secondmax}
where maximum EKR-sets of chambers in $\ff_q^4$ are determined to be precisely the classical examples for $q\geq 4$. Note that Theorem \ref{thm:maxsize} resolves the question of maximum size of an EKR-set of chambers for finite vector spaces of even dimension, but it does not resolve the question of their structure. Indeed, determining their structure is a difficult question in general. In this paper we provide machinery for approaching this problem in the form of \emph{antidesigns} which are (weighted) sets of chambers that intersect a maximum EKR-set in a constant, determined only by the size of the EKR-set and the antidesign itself. We borrow the terminology of ``antidesigns'' from Delsarte theory in association schemes, since they are determined by the eigenspaces of the Kneser graph on chambers. 
However, for dimension at least $3$ the association scheme induced by the Kneser graph on chambers is neither symmetric nor commutative and so Delsarte theory is not directly applicable.
We provide key families of antidesigns in Section \ref{sec:antidesigns} and by combining them with geometric arguments involving weights we are able to prove that for sufficiently large $q$ (relative to $n$), all maximum EKR-sets of chambers are classical for even dimensional finite vector spaces.

\begin{thm}\label{thm:Bound}
For every integer $n \ge 3$, there exists an integer $m_n$ such that every maximum
Erdős-Ko-Rado set of chambers of $\ff_q^{2n}$ with $q\ge m_n$ is of classical type. 
\end{thm}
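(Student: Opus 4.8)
The plan is to exploit the machinery of antidesigns together with the explicit description of the smallest-eigenvalue eigenspace from \cite{PreprintJanSamKlaus}. Let $\EKR$ be a maximum EKR-set in $\ff_q^{2n}$ and let $\charvec_\EKR$ denote its characteristic vector. By Theorem \ref{thm:maxsize} the bound is attained with equality, so the Delsarte-Hoffman argument forces $\charvec_\EKR$ to decompose as a constant term plus a component lying entirely in the eigenspace of the smallest eigenvalue. Concretely, I would first establish the \emph{antidesign orthogonality} principle: if $W$ is any (weighted) antidesign, then $\langle \charvec_\EKR, W\rangle$ equals a constant depending only on $|\EKR|$ and the total weight of $W$, independent of which maximum EKR-set we started with. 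This turns structural questions about $\EKR$ into a collection of linear equations, one for each antidesign in the families constructed in Section \ref{sec:antidesigns}.

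First I would fix a point $P$ of dimension one and compare $\EKR$ against the classical example $\EKR_P$ (the chambers whose $n$-subspace contains $P$). Using the antidesign equations, I would compute, for each point $P$ and each hyperplane $S$, a local quantity measuring how many chambers of $\EKR$ have their $n$-subspace through $P$ (respectively inside $S$). The goal is a \emph{weighting/averaging} argument: I would assign to each point $P$ a weight $w(P)$ equal to an appropriately normalised count of chambers of $\EKR$ whose middle subspace contains $P$, and similarly a dual weight for hyperplanes. The antidesign identities should pin down the sum of these weights and, more importantly, bound their second moment, so that a variance-type inequality forces almost all of the weight to concentrate on a single point $P_0$ or a single hyperplane $S_0$.

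Once concentration is established — say on a point $P_0$ — I would argue that $\EKR$ must in fact equal $\EKR_{P_0}$ exactly, not merely approximately. Here I would use the oppositeness structure directly: if a chamber $C\in\EKR$ fails to have $P_0\le C_n$, then it is opposite to many chambers of $\EKR_{P_0}$, and a careful count (using that $q$ is large) shows the resulting deficit in $|\EKR|$ exceeds what the maximality of $\EKR$ permits. The integer $m_n$ enters precisely here: the error terms in the variance bound and in the exchange argument are polynomial in $q$ of lower order than the main terms, so there is a threshold $m_n$ above which the inequalities become strict and force exactness.

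The hard part, I expect, will be the concentration step. Because the association scheme is neither symmetric nor commutative for $n\ge 3$, one cannot simply invoke Delsarte's clique-coclique duality or a ready-made characterisation of the tightness case; instead the eigenspace description from \cite{PreprintJanSamKlaus} must be leveraged by hand to produce enough independent antidesigns to control both the point-weights and the hyperplane-weights simultaneously. Ruling out ``mixed'' configurations, where some weight concentrates on a point and some on a hyperplane, will require showing that a point-pencil and a hyperplane-pencil cannot both contribute substantially to a single maximum EKR-set; I anticipate this dichotomy is where the bulk of the geometric casework lives, and where the precise value of $m_n$ is ultimately determined.
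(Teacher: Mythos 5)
Your opening step (the antidesign orthogonality principle) is exactly Proposition \ref{Intersection_F_with_antidesing} of the paper, and your overall goal --- showing that a non-classical maximum EKR-set is impossible once $q$ is large --- is the right one. But the core of your plan, the concentration step, has a genuine gap. The antidesign identities are \emph{linear} constraints on $\charvec_\EKR$: each one fixes the value of $\charvec_\EKR^\top v$ for a single vector $v$. The second moment of your point-weights, $\sum_P w(P)^2$, counts \emph{pairs} of chambers of $\EKR$ whose $n$-subspaces share a point; this is a quadratic functional of $\charvec_\EKR$, and no collection of linear identities can bound it without further input. You offer no substitute mechanism, and none is readily available: two chambers of an EKR-set may have disjoint $n$-subspaces (non-oppositeness only requires $C_i\cap D_{2n-i}\neq\{0\}$ for \emph{some} $i$), so there is no a priori pairwise-intersection structure on the middle subspaces that could feed a variance argument. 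Moreover, even the dichotomy you aim for (weight concentrating on a point $P_0$ or a hyperplane $S_0$) is not the right target: the genuinely hard non-classical configurations are Hilton--Milner-type families, where the relevant $n$-subspaces pairwise meet without a common point, and your outline has no tool to exclude them. Finally, your closing exchange argument silently needs a quantitative statement of the same Hilton--Milner flavour for chambers, which you would also have to prove.

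For contrast, the paper's proof avoids concentration entirely. It calls a subspace $S$ \emph{heavy} (with respect to $\EKR$) if every chamber through $S$ lies in $\EKR$, and splits into two cases. If some chamber of $\EKR$ has all its subspaces light, then the subspace antidesign identities (Lemma \ref{C: key for the proof}, which is where your orthogonality principle actually gets used) bound $|\EKR|$ by a polynomial in $q$ of degree $2n^2-2n-1$. If instead every chamber of $\EKR$ contains a heavy subspace, then heavy $s$-subspaces with $s\le n$ pairwise meet, so the Frankl--Wilson EKR theorem for vector spaces bounds their number by $\gauss{2n-1}{s-1}$, and --- this is precisely where non-classicality enters --- a Hilton--Milner theorem (Result \ref{L: HM_new}) bounds the number of heavy $n$-subspaces by $3\gauss{n}{1}\gauss{2n-2}{n-2}$; summing the chambers through heavy subspaces again gives degree $2n^2-2n-1$. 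Since $|\EKR|=z_{2n}/(1+q^n)$ has degree $2n^2-2n$, both cases are impossible for $q$ large. In short, the external combinatorial theorems (vector-space EKR and Hilton--Milner) do the work your variance argument was supposed to do; without them, or an honest proof of your concentration claim, your outline does not close.
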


Explicit values for $m$ can be derived from the proof and are given for small $n$ at the end of Section \ref{Section: Classification of ekr}.
Using a spread based antidesign, we resolve the case $n=2$ for all $q$, thus completing the EKR theorem for $\ff_q^4$.

\begin{thm} \label{T: chambers of f_q^4}
 Every maximum Erdős-Ko-Rado set of chambers of $\ff_q^{4}$ is classical.
\end{thm}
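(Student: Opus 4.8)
The plan is to reduce the problem to a statement about the middle $2$-dimensional subspaces of the chambers, and then to exploit a single spread-based antidesign. Here $n=2$, chambers of $\ff_q^4$ are flags $C_1\le C_2\le C_3$, and two chambers $C,D$ are opposite exactly when $C_1\cap D_3=\{0\}$, $C_2\cap D_2=\{0\}$ and $C_3\cap D_1=\{0\}$. For a $2$-dimensional subspace $\ell$ (a line of $\PG(3,q)$) let $m(\ell)$ be the number of chambers of $\EKR$ with middle space $C_2=\ell$; then $0\le m(\ell)\le (q+1)^2$ and $\sum_\ell m(\ell)=|\EKR|=(q+1)^2(q^2+q+1)$ by Theorem~\ref{thm:maxsize}. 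The endgame I aim for is the claim that the middle spaces pairwise intersect nontrivially. Granting this, the lines carrying positive weight form an intersecting family of $2$-spaces of $\ff_q^4$, so by the Erdős--Ko--Rado theorem for subspaces~\cite{ekr_vectorspaces} in the self-dual case $2\cdot 2=4$ there are at most $q^2+q+1$ of them, with equality only for a point-pencil or a plane-pencil. Since $\sum_\ell m(\ell)=(q+1)^2(q^2+q+1)$ and $m(\ell)\le (q+1)^2$, equality is forced throughout, every occurring middle line is used by all $(q+1)^2$ of its chambers, and $\EKR$ is the set of all chambers through a fixed point $P$ in their middle space (or the dual), i.e.\ a classical set as in Example~\ref{ex:classical}.

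The new ingredient is a spread $\mathcal S$ of $\ff_q^4$: a set of $q^2+1$ pairwise complementary $2$-spaces partitioning the points, with the well-known property that every point and every plane lies on exactly one line of $\mathcal S$. Let $Y_{\mathcal S}$ be the set of all chambers whose middle space lies in $\mathcal S$. Using the description of the minimal eigenspace of the chamber graph in~\cite{PreprintJanSamKlaus}, one checks that the characteristic vector of $Y_{\mathcal S}$ is orthogonal to that eigenspace, so $Y_{\mathcal S}$ is an antidesign and $|\EKR\cap Y_{\mathcal S}|$ is independent of the maximum set $\EKR$; evaluating on a classical set gives the value $(q+1)^2$. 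Hence $\sum_{\ell\in\mathcal S}m(\ell)=(q+1)^2$ for every spread $\mathcal S$. Counting the incidences $(\ell,\ell',\mathcal S)$ with $\ell,\ell'\in\mathcal S$, and using that each line lies in a constant number of spreads while exactly $q^4$ lines are skew to a given one, I obtain the key identity
\[
\sum_{\ell}m(\ell)\bigl((q+1)^2-m(\ell)\bigr)=\frac1{q^2}\sum_{\substack{\ell\ne\ell'\\ \ell\cap\ell'=\{0\}}}m(\ell)m(\ell').
\]
The left-hand side is a deficit vanishing precisely when every $m(\ell)\in\{0,(q+1)^2\}$, while the right-hand side measures the interaction of skew middle spaces; the whole game is to force the right-hand side to be $0$.

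To control that interaction I analyse oppositeness for two chambers $C,D$ with skew middles $\ell=C_2$ and $\ell'=D_2$. Since $\ell\cap\ell'=\{0\}$, non-oppositeness is equivalent to $C_1\le D_3$ or $D_1\le C_3$, and because no plane through $\ell'$ contains $\ell$ (and symmetrically), this becomes $D_3=\ell'+C_1$ or $C_3=\ell+D_1$. Thus, fixing $D$, every chamber of $\EKR$ with middle $\ell$ lies in the ``cross'' $\{C:C_1=\ell\cap D_3\}\cup\{C:C_3=\ell+D_1\}$ of the $(q+1)\times(q+1)$ grid of $(\text{point},\text{plane})$ pairs through $\ell$; in particular any line $\ell$ skew to some other occurring middle satisfies $m(\ell)\le 2q+1$, so such a line contributes at least $m(\ell)\,q^2$ to the left-hand side of the identity. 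A companion observation that I will also use is a full-weight lemma: if $m(\ell_0)=(q+1)^2$ then every chamber of $\EKR$ has middle meeting $\ell_0$, since otherwise the cross confinement would force a plane to contain the complementary sum $\ell_0+D_2=\ff_q^4$.

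The main obstacle is to make this local information defeat the right-hand side globally. The cross analysis bounds the weight of a single skew pair but does not by itself constrain the total weight $W$ carried by skew-involved middles, so the naive estimate (which only yields $W\ge q^4$) does not close the identity. The decisive step is therefore a sharper count: I will show, combining the cross structure with the spread equality $\sum_{\ell\in\mathcal S}m(\ell)=(q+1)^2$ (and, where needed, the explicit minimal eigenvectors of~\cite{PreprintJanSamKlaus}), that the total weight on lines skew to a fixed $\ell$ is strictly less than $q^2\bigl((q+1)^2-m(\ell)\bigr)$ whenever $0<m(\ell)<(q+1)^2$. Fed into the identity, this makes the right-hand side strictly dominated unless both sides vanish, forcing every $m(\ell)\in\{0,(q+1)^2\}$; the full-weight lemma then makes the occurring middles pairwise intersecting, and the reduction of the first paragraph yields that $\EKR$ is classical. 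I expect this sharp global weight estimate, bridging the per-pair cross bound and the total skew weight, to be the technical heart of the proof.
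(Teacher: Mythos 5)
Your setup (the reduction to pairwise-intersecting middle lines, the spread antidesign giving $\sum_{\ell\in\mathcal S}m(\ell)=(q+1)^2$, the cross analysis giving $m(\ell)\le 2q+1$ for a middle skew to another occurring middle, and the full-weight lemma) is all correct and consistent with the paper. But the step you yourself identify as the technical heart --- the claim that $\sum_{\ell'\,\text{skew to}\,\ell}m(\ell')<q^2\bigl((q+1)^2-m(\ell)\bigr)$ whenever $0<m(\ell)<(q+1)^2$ --- is not only unproven, it is provably false. The paper's subspace-based antidesign (Corollary \ref{L: preparation for EKR proof}, equivalently Lemma \ref{C: key for the proof}(a) with $s=n=2$) forces the \emph{exact} identity
\begin{align*}
\sum_{\ell'\,:\,\ell'\cap\ell=\{0\}}m(\ell')=q^2\bigl((q+1)^2-m(\ell)\bigr)
\end{align*}
for \emph{every} line $\ell$, with no hypothesis on $m(\ell)$. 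Consequently your key identity is a term-by-term consequence of these per-line equalities (multiply by $m(\ell)$, sum, divide by $q^2$): it is an identity with no slack, so the right-hand side can never be ``strictly dominated''. Your strategy amounts to proving a strict inequality that the antidesign machinery itself refutes whenever an intermediate-weight line exists, i.e.\ the proposed estimate is equivalent to the conclusion you are trying to reach and cannot be established by any counting of this kind. This is a genuine, unfixable gap in the proposed route.

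What is actually needed --- and what the paper does --- is structural, not enumerative, information about lines of intermediate weight. The paper takes a single spread $S$ through two supposedly skew middles, so that every line of $S$ is skew to some occurring middle and hence has weight in $\{0,1,2,q+1,2q+1\}$ (Lemma \ref{P: weight of lines}, imported from \cite{heeringmetsch2023secondmax}); it then uses that no two $\pi$-lines and no two $P$-lines are skew, that weight-$2$ lines are constrained (Lemma \ref{L: requirements}), and that weight-$(q+1)$ and weight-$(2q+1)$ lines come with a special point $P$ and plane $\pi$ confining all chambers on skew lines. Comparing with $\sum_{\ell\in S}w(\ell)=(q+1)^2$ from Corollary \ref{C: spreads intersection size} forces exactly one $\pi$-line, one $P$-line, and $q^2-1$ weight-one lines, and a final collinearity/pigeonhole argument inside the plane $\pi$ produces two opposite chambers of $\EKR$, a contradiction. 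Your cross analysis is a first step toward Lemma \ref{P: weight of lines}, but without the gap in the weight spectrum (no weights strictly between $2$ and $q+1$, or between $q+1$ and $2q+1$) and without the ``special point/plane'' consequences for chambers on skew lines, the count on a spread cannot be closed.
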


\section{Preliminaries}

\begin{notation}\label{Def_chi_PG}
For integers $q,a,b\in\mathbb{Z}$ with $q\ge 2$ we define the Gaussian coefficient
 \begin{align*}
 \gauss{b}{a}_q=
 \begin{cases}
\displaystyle \prod_{i=1}^a\frac{q^{b-a+i}-1}{q^i-1} & \text{if $0\le a\le b$,}
\\
0 & \text{otherwise.} \end{cases}
\end{align*}
We omit the index $q$ if it is clear from the context.

For a prime power $q$ we denote the finite field with $q$ elements by $\ff_q$. For integers $c\ge 0$, we denote by $z_c(q)$ the number of chambers in $\ff_q^c$, but we write $z_c$ if $q$ is clear from the context. For integers $c\ge 0$ we define $[c]_q:=\prod_{i=1}^c(q^i-1)$ and omit the index $q$ if it is clear from the context.
\end{notation}

\begin{remarks}\label{firstremarks}
\begin{enumerate}[1.]
\item We have $z_0(q)=z_1(q)=1$, $[0]_q=1$ and $[1]_q=q-1$.
\item For every prime power $q$ and all $0\le a\le b$ we have $\gauss{b}{a}_q=\frac{[b]_q}{[a]_q[b-a]_q}$.
\end{enumerate}
\end{remarks}

\begin{lemma}\label{basicnumbers}
    Let $c\ge s\ge 0$ be integers. Then
\begin{enumerate}[(a)]
\item The number of $s$-subspaces of $\ff_q^c$ is $\gauss{c}{s}_q$.
\item  For $c\ge 0$ the number of chambers of $\ff_q^c$ is $z_c(q)=(q-1)^{-c}[c]_q$.
\item For $n\in\mathbb{N}$ we have $z_{2n}(q)=\gauss{2n}{n}_q\cdot z_n(q)^2$.
\item Every chamber of $\ff_q^c$ is opposite to exactly $q^{c\choose 2}$ chambers of $\ff_q^c$.
\item If $C$ is a chamber of $\ff_q^c$ and $S$ a subspace of dimension $s$ with $S\cap C_{c-s}=\{0\}$, then the number of chambers of $\ff_q^c$ that contain $S$ and are opposite to $C$ is $q^{{s\choose 2}+{c-s\choose 2}}$.
\end{enumerate}
\end{lemma}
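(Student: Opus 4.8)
The plan is to handle the five parts in increasing order of difficulty, since (a)--(c) are standard enumerations while (d) and (e) carry the real content. For (a) I would count ordered linearly independent $s$-tuples in two ways: there are $\prod_{i=0}^{s-1}(q^c-q^i)$ of them, while each $s$-subspace carries $\prod_{i=0}^{s-1}(q^s-q^i)$ of them as ordered bases; dividing and cancelling the common factor $q^{\binom s2}$ yields $\gauss{c}{s}_q$. For (b) I would build a chamber from the bottom up: $C_1$ is any of the $\gauss{c}{1}_q=\frac{q^c-1}{q-1}$ lines, and given $C_{i-1}$ the space $C_i$ corresponds to a line of $\ff_q^c/C_{i-1}\cong\ff_q^{c-i+1}$, of which there are $\frac{q^{c-i+1}-1}{q-1}$; multiplying for $i=1,\dots,c-1$ telescopes to $[c]_q/(q-1)^c$. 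Part (c) then follows either combinatorially (choose the middle space $C_n$ in $\gauss{2n}{n}_q$ ways, then a chamber of $C_n$ and a chamber of $\ff_q^{2n}/C_n$, each in $z_n$ ways) or by substituting (a), (b) and Remark~\ref{firstremarks}.2 into $\gauss{2n}{n}_q z_n^2=\frac{[2n]_q}{[n]_q^2}(q-1)^{-2n}[n]_q^2$.

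The substance is in (d) and (e), and I would prove (d) first by induction on $c$, the cases $c\le 1$ being trivial. Fixing a chamber $C$ and an opposite chamber $D$, I single out $D_1$, which must satisfy $D_1\cap C_{c-1}=\{0\}$; the admissible lines are exactly those outside the hyperplane $C_{c-1}$, giving $q^{c-1}$ choices. Passing to the quotient $\ff_q^c/D_1$, the images $(C_k+D_1)/D_1$ form a chamber $\bar C$ (here $D_1\cap C_{c-1}=\{0\}$ guarantees the correct dimensions), and $D$ induces a chamber $\bar D$. The key step is the modular-law identity $D_j\cap(C_{c-j}+D_1)=(D_j\cap C_{c-j})+D_1$, which, combined with $D_1\cap C_{c-1}=\{0\}$, shows that the remaining conditions $D_j\cap C_{c-j}=\{0\}$ for $j\ge 2$ are \emph{equivalent} to $\bar D$ being opposite to $\bar C$ in $\ff_q^c/D_1$. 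By induction there are $q^{\binom{c-1}2}$ such $\bar D$, and since $(c-1)+\binom{c-1}2=\binom c2$ the total is $q^{c-1}\cdot q^{\binom{c-1}2}=q^{\binom c2}$.

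For (e) I would fix $C$ and the $s$-space $S$ with $S\cap C_{c-s}=\{0\}$, and parametrise a chamber $D$ with $D_s=S$ by its lower part (a chamber of $S$) and its upper part (a chamber of $\ff_q^c/S$), chosen independently. The first thing to establish is the dimension formula $\dim(S\cap C_k)=\max(0,k-c+s)$: the hypothesis gives $S\cap C_{c-s}=\{0\}$, and since $\dim(S\cap C_k)$ grows by at most one per step yet must reach $s$ at $k=c$, it grows by exactly one on each of the $s$ steps from $k=c-s$ to $k=c$. Consequently the spaces $S\cap C_{c-s+i}$ form a chamber $\hat C$ of $S$, and the projections $(C_k+S)/S$ form a chamber of $\ff_q^c/S$. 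Using the same modular-law identity $D_j\cap(C_{c-j}+S)=(D_j\cap C_{c-j})+S$ (valid since $S\subseteq D_j$ for $j\ge s$), the oppositeness conditions split: those with $j<s$ say exactly that the lower part is opposite to $\hat C$ in $S$, the condition $j=s$ is the hypothesis, and those with $j>s$ say exactly that the upper part is opposite to the induced chamber of $\ff_q^c/S$. Applying (d) in dimensions $s$ and $c-s$ gives $q^{\binom s2}\cdot q^{\binom{c-s}2}$; note (d) is the case $s=0$, so I prove (d) independently to avoid circularity.

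The main obstacle I anticipate is precisely this decoupling of the oppositeness conditions: verifying the modular-law identities and, in (e), the forced dimensions $\dim(S\cap C_k)=\max(0,k-c+s)$ that certify the induced flags in $S$ and in $\ff_q^c/S$ are genuine chambers. Once these are in place the product counts are immediate and the exponent bookkeeping, $(c-1)+\binom{c-1}2=\binom c2$ and $\binom s2+\binom{c-s}2$, is routine.
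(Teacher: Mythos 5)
Your proposal is correct and follows essentially the same route as the paper: (b) and (d) are proved by induction via the quotient by a $1$-subspace, and (e) by splitting a chamber through $S$ into a chamber of $S$ and a chamber of $\ff_q^c/S$ and applying (d) to each factor. The only differences are cosmetic: you prove (a) by double counting where the paper cites Hirschfeld, and you spell out the modular-law identities and the dimension formula $\dim(S\cap C_k)=\max(0,k-c+s)$ that the paper leaves implicit (you also implicitly fix the paper's slip of citing part (c) instead of part (d) inside the proof of (e)).
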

\begin{proof}
\begin{enumerate}[(a)]
\item See Section 3.1 in \cite{Hirschfeld}
\item  
We use induction on $c$. For $c=0$ and $c=1$ the only chamber is the empty set. If $c\ge 2$, there exist $\gauss{c}{1}$ subspaces $U$ of dimension one. The induction hypothesis applied to the factor space $\ff_q^c/U$ shows that each such subspace lies in  $(q-1)^{-c+1}[c-1]$chambers. Hence the total number of chambers is  $(q-1)^{-c+1}[c-1]\gauss{c}{1}=(q-1)^{-c}[c]$.
\item This follows from (b) but can also be seen geometrically, since $\gauss{2n}{n}$ is the number of $n$-subspaces of $\ff_q^{2n}$ and each $n$-subspace is a member of $z_n(q)^2$ chambers. 
\item We prove this by induction on $c$, the cases $c=0,1$ being obvious. Now let $c\ge 2$ and $C$ be a chamber of $\ff_q^c$. If $D$ is a chamber opposite to $C$, then its $1$-subspace $D_1$ is not contained in $C_{c-1}$. From (a) we see that there are $\gauss{c}{1}-\gauss{c-1}{1}=q^{c-1}$ subspaces $U$  of dimension one not contained in $C_{c-1}$. In the quotient space $\ff_q^c/U$, we see that each such subspace $U$ can be extended in exactly $q^{c-1\choose 2}$ chambers that are opposite to $C$. Hence $C$ is opposite to $q^{c-1}q^{c-1\choose 2}=q^{c\choose 2}$ chambers.
\item
The set $\underline{C}:=\{S\cap C_i\mid c-s<i<c\}$ is a chamber of $S$. Part (c) of Lemma \ref{basicnumbers} shows that $S$ has exactly $q^{s\choose 2}$ chambers $D$ opposite to $\underline C$. The set $\bar C:=\{(S+C_i)/S\mid 0<i<c-s\}$ is a chamber in the factor space $\ff_q^c/S$ and Lemma \ref{basicnumbers} shows that $\ff_q^c/S$ has exactly $q^{c-s\choose 2}$ chambers opposite to $\bar C$. Such a chamber has the form $\{U_i/S\mid s<i<c\}$ where $U_i$ is a subspace of dimension $s+i$ containing $S$. For such a chamber, the union of $\{U_i\mid s<i<c\}$ with any of the above chambers $D$ is a chamber opposite to $C$ and all chambers opposite to $C$ have this form.
\end{enumerate}
\end{proof}

\begin{lemma}\label{numberofsubspaces}
For integers $a,b,d\ge 0$ with $d\ge a+b$, consider a subspace $A$ of dimension $a$ in an $\ff_q$ vector space $T$ of dimension $d$. Then the number of $b$-subspaces of $T$ that intersect $A$ trivially is $\gauss{d-a}{b}q^{ab}$.
\end{lemma}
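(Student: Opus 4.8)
The plan is to count the desired $b$-subspaces by a standard double-counting argument that relates subspaces meeting $A$ trivially to subspaces of the quotient $T/A$. I would first pass to the quotient space $\bar T := T/A$, which has dimension $d-a$. The key observation is that a $b$-subspace $B$ of $T$ intersects $A$ trivially precisely when the natural projection $\pi\colon T\to \bar T$ restricts to an isomorphism on $B$, so that $\pi(B)$ is a $b$-subspace of $\bar T$. Thus I would set up a map sending each such $B$ to the $b$-subspace $\pi(B)$ of $\bar T$, and the strategy is to count the fibres of this map: first count the $b$-subspaces $\bar B$ of $\bar T$, then count how many $b$-subspaces $B$ of $T$ with $B\cap A=\{0\}$ map to a fixed $\bar B$.

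For the first count, Lemma \ref{basicnumbers}(a) gives exactly $\gauss{d-a}{b}_q$ subspaces of dimension $b$ in $\bar T$. For the fibre count, I would fix a $b$-subspace $\bar B$ of $\bar T$ and let $W := \pi^{-1}(\bar B)$, a subspace of $T$ of dimension $a+b$ containing $A$. The $b$-subspaces $B$ with $\pi(B)=\bar B$ and $B\cap A=\{0\}$ are exactly the complements of $A$ inside $W$, i.e.\ the $b$-subspaces $B\le W$ with $B\oplus A = W$. The core computation is therefore to show that a fixed $a$-subspace $A$ inside an $(a+b)$-dimensional space $W$ has exactly $q^{ab}$ complements. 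This I would do by choosing an ordered basis: picking vectors $w_1,\dots,w_b\in W$ whose images in $W/A$ form a basis of $W/A$, where each $w_i$ is determined modulo $A$ (contributing $q^a$ choices per vector after normalising, across $b$ vectors), and then dividing by the number of ordered bases of a fixed complement to correct for overcounting; the arithmetic collapses to $q^{ab}$. Combining the two counts via multiplicativity of fibres yields $\gauss{d-a}{b}_q\, q^{ab}$.

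The main obstacle is the complement-counting step, and care is needed to carry it out cleanly. The cleanest route is to count ordered linearly independent $b$-tuples spanning a complement of $A$ in $W$ and divide by the number $\prod_{i=0}^{b-1}(q^b-q^i)$ of ordered bases of a $b$-space; the number of such spanning tuples works out to $\prod_{i=0}^{b-1}(q^{a+b}-q^{a+i})=q^{ab}\prod_{i=0}^{b-1}(q^b-q^i)$, since once the first $i$ chosen vectors together with $A$ span an $(a+i)$-dimensional subspace, the next vector must avoid that subspace, leaving $q^{a+b}-q^{a+i}$ admissible choices. The factor $q^{ab}$ then survives the division, giving the claimed count of complements. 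An alternative, perhaps slicker, approach avoids the quotient entirely by directly counting ordered $b$-tuples of vectors in $T$ that are linearly independent modulo $A$: the $i$-th vector must lie outside the $(a+i-1)$-dimensional span of $A$ and the previously chosen vectors, giving $q^d-q^{a+i-1}$ choices, and dividing by $\prod_{i=0}^{b-1}(q^b-q^i)$ again yields $\gauss{d-a}{b}_q\,q^{ab}$ after factoring. Either way the argument is elementary once the right quantity is counted, so I would present whichever of the two keeps the bookkeeping shortest.
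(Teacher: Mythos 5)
Your proof is correct, but it is worth noting that the paper does not actually prove this lemma at all: its ``proof'' is a one-line citation to Section 3.1 of Hirschfeld's book, so any genuine argument here is your own contribution. Both of your routes are sound. In the quotient approach, the key facts check out: a $b$-subspace $B\le T$ satisfies $B\cap A=\{0\}$ iff $\pi|_B$ is injective, the fibre of $B\mapsto\pi(B)$ over a fixed $\bar B\le T/A$ is exactly the set of complements of $A$ in $W=\pi^{-1}(\bar B)$, and the complement count is $q^{ab}$ (your tuple count $\prod_{i=0}^{b-1}(q^{a+b}-q^{a+i})=q^{ab}\prod_{i=0}^{b-1}(q^b-q^i)$ divided by the $\prod_{i=0}^{b-1}(q^b-q^i)$ ordered bases per complement is right; even slicker, fixing one basis of $W/A$ and counting its $q^{ab}$ lifts gives the answer with no division). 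Your direct approach is the shortest: $\prod_{i=0}^{b-1}(q^d-q^{a+i})$ ordered $b$-tuples independent modulo $A$, divided by $\prod_{i=0}^{b-1}(q^b-q^i)$, and the factorisation $q^d-q^{a+i}=q^{a+i}(q^{d-a-i}-1)$ indeed collapses the quotient to $\gauss{d-a}{b}_q\,q^{ab}$. The only blemish is the slightly muddled middle description of the complement count (``determined modulo $A$ \dots\ then dividing \dots''), which you immediately replace by the clean tuple-counting version, so nothing is lost. What your argument buys over the paper's citation is self-containedness; what the citation buys is brevity and deference to a standard reference, which is a reasonable editorial choice for such a classical fact.
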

\begin{proof}
See Section 3.1 in \cite{Hirschfeld}.
\end{proof}

\begin{lemma}\label{ChambeerExtensions}
Consider in $\ff_q^d$ a non-empty flag $F$ and let $\{t_1,t_2,\dots,t_f\}$ with $f=|F|$ be its type where $t_1<t_2<\dots<t_f$. Then $F$ is contained in exactly $z_{t_1}z_{d-t_f}\prod_{i=2}^f(z_{t_i-t_{i-1}})$ chambers of $\ff_q^d$.
\end{lemma}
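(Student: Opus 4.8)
The plan is to show that completing $F$ to a chamber amounts to independently choosing a chamber in each ``gap'' of the type, and then to multiply the resulting counts using Lemma \ref{basicnumbers}(b). Write the members of $F$ as $F_{t_1}\subset F_{t_2}\subset\dots\subset F_{t_f}$ with $\dim(F_{t_i})=t_i$, and adjoin the formal endpoints $t_0:=0$, $F_{t_0}:=\{0\}$ and $t_{f+1}:=d$, $F_{t_{f+1}}:=\ff_q^d$. A chamber $C$ of $\ff_q^d$ contains $F$ precisely when $C_{t_i}=F_{t_i}$ for $i=1,\dots,f$. The remaining subspaces of $C$, those of dimensions strictly between consecutive values $t_{i-1}$ and $t_i$, then split uniquely into $f+1$ blocks, the $i$-th block consisting of the members of $C$ lying strictly between $F_{t_{i-1}}$ and $F_{t_i}$.

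Next I would set up, for each $i\in\{1,\dots,f+1\}$, a bijection between the possible contents of the $i$-th block and the chambers of the quotient space $F_{t_i}/F_{t_{i-1}}$, which has dimension $t_i-t_{i-1}$. This is the correspondence theorem: the subspaces $W$ with $F_{t_{i-1}}\le W\le F_{t_i}$ correspond bijectively, and in a dimension- and inclusion-preserving way, to the subspaces $W/F_{t_{i-1}}$ of $F_{t_i}/F_{t_{i-1}}$. Hence the members of $C$ of dimensions $t_{i-1}+1,\dots,t_i-1$ form, after passing to the quotient, exactly a chamber of $F_{t_i}/F_{t_{i-1}}$, and every such chamber arises this way. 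By Lemma \ref{basicnumbers}(b) the number of these chambers is $z_{t_i-t_{i-1}}$. For the bottom block ($i=1$) this is a chamber of $F_{t_1}\cong\ff_q^{t_1}$, contributing $z_{t_1}$, and for the top block ($i=f+1$) a chamber of $\ff_q^d/F_{t_f}$, contributing $z_{d-t_f}$; both are merely the cases $t_0=0$ and $t_{f+1}=d$ of the same formula.

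Since the choices in distinct blocks are made independently of one another, the number of chambers containing $F$ is the product over all blocks, namely $z_{t_1}\,z_{d-t_f}\,\prod_{i=2}^{f}z_{t_i-t_{i-1}}$, as claimed. I expect no serious obstacle: the only points needing care are verifying that the decomposition into blocks is genuinely a product (that is, the blocks can be filled independently, each completion gluing back to a valid chamber), and checking the degenerate cases where a gap has width one and hence contributes no new subspace. These are handled automatically, since $z_0=z_1=1$ by Remarks \ref{firstremarks}, covering also the boundary situations $t_1=1$ or $t_f=d-1$.
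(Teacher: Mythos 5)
Your proof is correct and follows the same route as the paper: adjoin the endpoints $\{0\}$ and $\ff_q^d$, identify the completions of $F$ in each gap with chambers of the successive quotients $F_{t_i}/F_{t_{i-1}}$, count each by Lemma \ref{basicnumbers}(b), and multiply. The paper's proof is just a terser version of yours, leaving the correspondence-theorem bijection and the independence of the blocks implicit.
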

\begin{proof}
Let $T_i$ be the subspace of $F$ that has dimension $t_i$. Put $T_0:=\{0\}$, $T_{f+1}:=\ff_q^d$, $t_0=0$ and $t_{f+1}=d$. Then $t_i$ is the dimension of $T_i$ for $i=0,\hdots,f+1$. By Lemma \ref{basicnumbers} the number of chambers of $T_i/T_{i-1}$ is $z_{t_i-t_{i-1}}$, $i=1,\dots,f+1$. It follows that the number of chambers containing $F$ is equal to $\prod_{i=1}^{f+1}(z_{t_i-t_{i-1}})$.
\end{proof}

\section{Antidesigns}\label{sec:antidesigns}

For an integer $d\ge 1$ and the finite field $\ff_q$ we define the Kneser graph on chambers of $\ff_q^{d}$ to be the graph whose vertices are the chambers of $\ff_q^d$ where two vertices are adjacent if and only if the corresponding chambers are opposite. In this paper we denote this graph by $\Gamma_d(q)$. For positive integers $n\ge 1$ it was shown in \cite{AlgebraicApproach} that the smallest eigenvalue of $\Gamma_{2n}(q)$ is $\lambda:=-q^{2n(n-1)}$ and that the corresponding eigenspace has dimension $n\cdot\frac{q^{2n}-q}{q-1}$. Applying the Hoffmann bound, the authors of \cite{AlgebraicApproach} showed that the independence number of $\Gamma_{2n}(q)$ is (see also Theorem \ref{thm:maxsize})
\begin{align*} %\label{eqn_coclique_number}
\alpha(\Gamma_{2n}(q))=\frac{z_{2n}(q)}{q^n+1}.
\end{align*}

If $d$ is the number of chambers of $\Gamma_{2n}(q)$, then we index the entries of a vector of $\mathbb{Q}^d$ by the chambers of $\Gamma_{2n}(q)$. We also view a vector $v$ of $\mathbb{Q}^d$ as a map from the set of all chambers to $\mathbb{Q}$; then $v(C)$ is the entry of $v$ in the position corresponding to $C$ for all chambers $C$. For a subset $X$ of chambers we define its \emph{characteristic vector} $\charvec_X$ by $\charvec_X(C)=1$, if $C\in X$, and $\charvec_X(C)=0$, if $C$ is a chamber that is not in $X$. 
We shall denote the all ones vector of $\mathbb{Q}^d$ by $\allones$, which is the characteristic vector of the set of all chambers. 

For every independent set $\EKR$ of $\Gamma_{2n}(q)$ satisfying $|\EKR|=\alpha(\Gamma_{2n}(q))$ we have $\charvec_\EKR\in\langle \allones \rangle+E$ where $E$ is the eigenspace of $\Gamma_{2n}(q)$ for its smallest eigenvalue 
(see Theorem 2.4.1 in \cite{GodsilMeagher}).
An explicit description of $E$ is thus a helpful tool for classifying all largest independent sets. One possibility is to use such a description to prove that certain vectors are antidesigns, where we use the following notation in this paper.

\begin{de}
Let $E$ be the eigenspace for the smallest eigenvalue of $\Gamma_{2n}(q)$. An \emph{antidesign} of $\Gamma_{2n}(q)$ is a function $v:V(\Gamma_{2n}(q))\to\mathbb{Q}$ such that $e^\top v=0$ for all $e\in E$.
\end{de}

The reason why we are interested in antidesigns is the following well-known fact, which will enable us to derive geometric insight about largest independent sets. Though its proof is well-known, we give the short argument.

\begin{prop}\label{Intersection_F_with_antidesing}
If $\EKR$ is a largest independent set of $\Gamma_{2n}(q)$ and if $v$ is an antidesign, then
\begin{align*}
\charvec_\EKR^\top v=\frac{\allones^\top v}{q^n+1}.
\end{align*}
\end{prop}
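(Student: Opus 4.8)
The plan is to exploit the structural fact already quoted before the statement: for a largest independent set, $\charvec_\EKR$ lies in $\langle\allones\rangle + E$, where $E$ is the eigenspace for the smallest eigenvalue. First I would write
\[
\charvec_\EKR = c\,\allones + e
\]
for some scalar $c\in\mathbb{Q}$ and some $e\in E$, and determine $c$ explicitly before taking the inner product with $v$.

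To pin down $c$, I would use that the adjacency matrix of $\Gamma_{2n}(q)$ is symmetric (oppositeness is a symmetric relation) and that $\Gamma_{2n}(q)$ is regular, so that $\allones$ is an eigenvector for the degree, which is the largest eigenvalue. Since $E$ is the eigenspace for a \emph{different} eigenvalue, orthogonality of eigenspaces of a symmetric matrix gives $\allones^\top e = 0$. Taking the inner product of $\charvec_\EKR = c\,\allones + e$ with $\allones$ then yields
\[
|\EKR| = \allones^\top\charvec_\EKR = c\,\allones^\top\allones = c\,z_{2n}(q),
\]
and since $|\EKR| = \alpha(\Gamma_{2n}(q)) = z_{2n}(q)/(q^n+1)$ by Theorem \ref{thm:maxsize}, I obtain $c = 1/(q^n+1)$.

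Finally I would take the inner product with the antidesign $v$. By the definition of an antidesign we have $e^\top v = 0$, and therefore
\[
\charvec_\EKR^\top v = c\,\allones^\top v + e^\top v = \frac{\allones^\top v}{q^n+1},
\]
which is exactly the claimed identity.

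There is no serious obstacle here; the argument is entirely linear-algebraic and short, as the paper itself anticipates. The one point that deserves care is the orthogonality $\allones^\top e = 0$: it rests on the symmetry of the adjacency matrix together with the regularity of the graph, the latter guaranteeing that $\allones$ spans the eigenspace for the degree and hence is orthogonal to $E$. This is also why the result is unaffected by the fact, noted in the introduction, that the full association scheme on chambers is neither symmetric nor commutative: only the symmetry of the single Kneser graph $\Gamma_{2n}(q)$ is used.
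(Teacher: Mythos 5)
Your proof is correct and takes essentially the same route as the paper: both write $\charvec_\EKR = c\,\allones + e$ with $e\in E$, use the antidesign condition $e^\top v=0$, and plug in $|\EKR| = z_{2n}/(q^n+1)$ and $\allones^\top\allones = z_{2n}$. The only differences are cosmetic --- you skip the paper's (unneeded) decomposition $v=\ell\,\allones+w$, and you explicitly justify the orthogonality $\allones^\top e=0$ via symmetry of the adjacency matrix and regularity, a step the paper leaves implicit.
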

\begin{proof}
If $E$ is the eigenspace of the smallest eigenvalue of $\Gamma_{2n}(q)$, then $\charvec_\EKR\in\langle \allones \rangle+E$, that is $\charvec_\EKR=k\allones+e$ for some $k\in\mathbb{Q}$ and $e\in E$. We write $v=\ell \allones+w$ with $\allones^\top w=0$. Since $e^\top v=0$, $\allones^\top \charvec_\EKR=k\allones^\top \allones$ and $\allones^\top v=\ell \allones^\top \allones$, then
\begin{align*}
\charvec_\EKR^\top v =(k\allones+e)^\top(\ell \allones+w)=k\ell \allones^\top \allones =\frac{(\allones^\top\charvec_\EKR)(\allones^\top v)}{\allones^\top \allones}.
\end{align*}
The assertion follows from the fact that $\allones^\top \allones$ is the number of vertices, which is $z_{2n}$ and $\allones^\top\charvec_\EKR$ is the size of $\EKR$ which is $\frac{z_{2n}}{q^n+1}$ by 
Theorem \ref{thm:maxsize} and Lemma \ref{basicnumbers}.
\end{proof}

In order to prove that certain functions are antidesigns, we need the following explicit description of the eigenspace $E$, which was proved in \cite{PreprintJanSamKlaus}.

\begin{notation}
Let $n\ge 1$ be an integer and let $\CC$ be the set of chambers of $\ff_q^{2n}$. For each subspace $P$ of dimension one and each $i\in\{1,\dots,n\}$, define the map $\chi^i_P:\CC\to\mathbb{Q}$ where
\begin{align*}
\chi^i_P(C)=\left\{\begin{array}{rl}
q^i & \text{if $P\le C_n$ and $P\not\le C_{n-i}$}
\\
-1 &  \text{if $P\le C_{n+i}$ and $P\not\le C_n$}
\\0 & \text{if $P\le C_{n-i}$ or $P\not\le C_{n+i}$}
\end{array}
\right.
\end{align*}
for all $C\in\CC$.
\end{notation}

\begin{thm}\label{PreprintJanSamKlaus}
The eigenspace for the smallest eigenvalue of $\Gamma_{2n}(q)$, $n\ge 1$, is spanned by the vectors $\chi^i_P$ where $i\in\{1,\dots,n\}$ and $P$ is a subspace of dimension one of $\ff_q^{2n}$.
\end{thm}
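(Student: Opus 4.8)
The plan is to prove the statement in two parts: first that each $\chi^i_P$ lies in $E$, i.e.\ is an eigenvector of the adjacency matrix $A$ of $\Gamma_{2n}(q)$ for the eigenvalue $\lambda=-q^{2n(n-1)}$; and second that the $\chi^i_P$ span all of $E$, using the fact (cited from \cite{AlgebraicApproach}) that $\dim E=n\cdot\frac{q^{2n}-q}{q-1}$.

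For the membership part I would first observe that the value $\chi^i_P(C)$ depends only on the integer $m=m(C,P)\in\{1,\dots,2n\}$ determined by $P\le C_m$ and $P\not\le C_{m-1}$ (with the conventions $C_0=\{0\}$, $C_{2n}=\ff_q^{2n}$): it equals $q^i$ if $n-i+1\le m\le n$, equals $-1$ if $n+1\le m\le n+i$, and is $0$ otherwise. Since the stabiliser of $C$ acts transitively on the one-spaces with a given value of $m$ and preserves oppositeness, the quantity $(A\chi^i_P)(C)=\sum_{D\text{ opp }C}\chi^i_P(D)$ likewise depends only on $m$, so the eigenvector equation reduces to a finite check over $m\in\{1,\dots,2n\}$. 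To carry out the check I would count, for each $\ell$, the number of chambers $D$ opposite to $C$ with $P\le D_\ell$. Choosing $D_\ell$ first (an $\ell$-space through $P$ meeting $C_{2n-\ell}$ trivially, counted by Lemma~\ref{numberofsubspaces}) and then completing it to an opposite chamber (counted by Lemma~\ref{basicnumbers}(e)), a short computation gives that this number equals $q^{2n^2-3n+\ell}$ for $\ell\ge 2n-m+1$ and $0$ otherwise. Differencing in $\ell$ yields the number $N_\ell$ of opposite $D$ with $m(D,P)=\ell$, and substituting into $\sum_{D\text{ opp }C}\chi^i_P(D)=q^i\sum_{\ell=n-i+1}^{n}N_\ell-\sum_{\ell=n+1}^{n+i}N_\ell$ confirms in each of the four ranges of $m$ that the sum equals $\lambda\,\chi^i_P(C)$. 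This counting is the main computational obstacle.

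For the spanning part, membership already gives $\langle\chi^i_P\rangle\subseteq E$, so it suffices to show the span has dimension $\dim E$. There are $n\cdot\frac{q^{2n}-1}{q-1}$ vectors $\chi^i_P$, which is exactly $n$ more than $\dim E$, so I expect precisely $n$ independent linear relations. These are visible directly: for each fixed $i$ one computes
\[
\sum_{P}\chi^i_P(C)=q^i\cdot q^{\,n-i}\tfrac{q^i-1}{q-1}-q^{\,n}\tfrac{q^i-1}{q-1}=0
\]
for every chamber $C$, where the two terms count the one-spaces $P$ with $\chi^i_P(C)=q^i$ and with $\chi^i_P(C)=-1$ respectively. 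Hence $\sum_P\chi^i_P=0$ for $i=1,\dots,n$, and these $n$ relations are clearly linearly independent. It remains to prove that they generate all relations, equivalently that the relation space is exactly $n$-dimensional.

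The hard part of the spanning step is this last claim. One clean route is representation-theoretic: since $g\cdot\chi^i_P=\chi^i_{gP}$ for $g\in\mathrm{GL}_{2n}(q)$, each $E_i:=\langle\chi^i_P\mid P\rangle$ is a $\mathrm{GL}_{2n}(q)$-submodule of $E$, and it would suffice to show that each $E_i$ is irreducible of dimension $\frac{q^{2n}-q}{q-1}$ and that the sum $\sum_i E_i$ is direct. A more hands-on alternative is to assume a relation $\sum_{i,P}a_{i,P}\chi^i_P=0$; since $\chi^i_P(C)$ depends on $C$ only through $m(C,P)$, this reads $\sum_{P}v_P(m(C,P))=0$ for all chambers $C$, where $v_P(m):=\sum_i a_{i,P}\chi^i_P(C)$ for any $C$ with $m(C,P)=m$. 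Applying local modifications that change a single subspace $C_k$ of the chamber and comparing the resulting conditions should peel off the dependence on individual one-spaces and force $a_{i,P}$ to be independent of $P$, so that the relation is a combination of the $n$ found above. Either way, ruling out extra relations is the second, more delicate, obstacle after the eigenvalue computation.
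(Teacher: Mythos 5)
The paper contains no internal proof of this statement: Theorem \ref{PreprintJanSamKlaus} is quoted from the cited preprint, so there is no in-paper argument to compare yours against; I judge the proposal on its own merits.

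Your membership half is correct. The count of chambers $D$ opposite to $C$ with $P\le D_\ell$ is indeed $q^{(2n-\ell)(\ell-1)}\cdot q^{\binom{\ell}{2}+\binom{2n-\ell}{2}}=q^{2n^2-3n+\ell}$ when $\ell\ge 2n-m+1$ and $0$ otherwise, and telescoping these counts through the four ranges of $m$ really does give $A\chi^i_P=-q^{2n(n-1)}\chi^i_P$; likewise the relations $\sum_P\chi^i_P=0$ and the count that the number of vectors exceeds $\dim E$ by exactly $n$ are right. The genuine gap is the spanning half, and you acknowledge it yourself: nothing in the proposal shows that the relation space is only $n$-dimensional, i.e.\ that $\dim\langle\chi^i_P\rangle\ge n\cdot\frac{q^{2n}-q}{q-1}$. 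This is not a routine loose end but the real content of the theorem, since the containment $\langle\chi^i_P\rangle\subseteq E$ alone proves nothing about equality. Your representation-theoretic sketch gets partway: because $\mathrm{GL}_{2n}(q)$ is $2$-transitive on points, the permutation module on points is the trivial module plus an irreducible $W$ of dimension $\frac{q^{2n}-q}{q-1}$, and since your relation kills the trivial summand, each $E_i=\langle\chi^i_P\mid P\rangle$ is a nonzero image of $W$, hence irreducible of exactly that dimension. But the remaining step, that $E_1+\cdots+E_n$ is direct, is precisely the hard point and does not follow from irreducibility: the $E_i$ are pairwise isomorphic, so they all lie in one isotypic component, where even pairwise distinctness (equivalently pairwise trivial intersection) of $n\ge 3$ irreducible submodules does not force their sum to be direct. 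Closing this would require, for instance, computing the multiplicity of $W$ in the chamber module and analysing how the adjacency operator acts on that isotypic component, or exhibiting for each $i$ a functional vanishing on $\sum_{j\ne i}E_j$ but not on $E_i$; your alternative ``local modification'' route is likewise only a heuristic. As it stands, the proposal is an incomplete proof.
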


This section is devoted to the construction of antidesigns. In our first family the antidesigns are based on a spread.
Recall that the vertices of $\Gamma_{2n}(q)$ are the chambers of $\ff_q^{2n}$.

\subsection{Spread based antidesigns}

An \emph{$n$-spread} of $\ff_q^{2n}$ is  a set $S$ of subspaces of rank $n$ such that every $1$-subspace is a subspace of a unique member of $S$. It follows that $|S|=q^n+1$. Spreads exist for all $n$, e.g. consider the set $S_0$ of all $q^n+1$ subspaces of dimension one of $\ff_{q^n}^2$. If we understand $\ff_{q^n}^2$ as an $\ff_q$-subspace, then it has dimension $2n$ and $S_0$ is a spread of $n$-subspaces of it. A \emph{$t$-fold $n$-spread} is a set $S$ of subspaces of rank $n$ such that every $1$-subspace is a subspace of exactly $t$ subspaces of $S$. It follows that $|S|=t(q^n+1)$. Of course, the disjoint union of $t$ spreads is a $t$-fold $n$-spread but there exist others.

\begin{notation}\label{SpreadAntidesign}
Let $n\ge 1$ be an integer and let $S$ be a $t$-fold $n$-spread of $\ff_q^{2n}$. Let $\CC$ be the set of chambers of $\ff_q^{2n}$ and let $v_S$ be the map from $\CC$ to $\mathbb{Q}$ with
\begin{align*}
v_S(C)=\left\{\begin{array}{ll}
1 & \text{if $C_n\in S$,}
\\
0 &  \text{if $C_n\notin S$,}
\end{array}
\right.
\end{align*}
for all chambers $C$.
\end{notation}

\begin{theorem}\label{thm:SpreadAntidesigns}
For every t-fold n-spread $S$ of $\ff_q^{2n}$, the map $v_S$ is an antidesign with $\allones^\top v_S=t(q^n+1)z_n(q)^2$.
\end{theorem}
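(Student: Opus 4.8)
The plan is to verify the two assertions separately, starting with the easy one. Since $v_S(C)=1$ exactly when $C_n\in S$, the quantity $\allones^\top v_S$ simply counts the chambers whose $n$-subspace lies in $S$. Each member of $S$ is the $n$-subspace of precisely $z_n(q)^2$ chambers, which is Lemma \ref{ChambeerExtensions} applied to a flag of type $\{n\}$ in $\ff_q^{2n}$ (giving $z_n\cdot z_{2n-n}=z_n^2$), and distinct members of $S$ yield disjoint families of chambers because a chamber has a unique $n$-member. As $|S|=t(q^n+1)$, this gives $\allones^\top v_S=t(q^n+1)z_n(q)^2$.

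For the antidesign property, by Theorem \ref{PreprintJanSamKlaus} it suffices to prove $(\chi^i_P)^\top v_S=0$ for every $i\in\{1,\dots,n\}$ and every $1$-subspace $P$. Since $v_S$ is supported on chambers with $C_n\in S$, and on such chambers $\chi^i_P$ takes only the nonzero values $q^i$ (when $P\le C_n$ and $P\not\le C_{n-i}$) and $-1$ (when $P\not\le C_n$ and $P\le C_{n+i}$) — these two cases being mutually exclusive by the trichotomy in the definition of $\chi^i_P$ — I can write $(\chi^i_P)^\top v_S=q^iN_+-N_-$, where $N_+$ counts chambers with $C_n\in S$, $P\le C_n$, $P\not\le C_{n-i}$, and $N_-$ counts chambers with $C_n\in S$, $P\not\le C_n$, $P\le C_{n+i}$. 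The goal then becomes the identity $q^iN_+=N_-$.

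The key idea is to evaluate both counts by fixing $C_n=U\in S$ and factoring each chamber into its part inside $U$ and its part in the quotient $\ff_q^{2n}/U$. For $N_+$ only the $t$ members $U\in S$ with $P\le U$ occur; the quotient part is then unconstrained (a factor $z_n$), while $P\not\le C_{n-i}$ is a condition purely on the chamber of $U$, so $N_+=t\cdot z_n\cdot a$, where $a$ is the number of chambers of an $n$-space in which a fixed $1$-subspace avoids the $(n-i)$-member. For $N_-$ the relevant $U\in S$ are those with $P\not\le U$, of which there are $t(q^n+1)-t=tq^n$; here the part inside $U$ is unconstrained (a factor $z_n$), and passing to the quotient turns $P\le C_{n+i}$ into the condition that the image $\bar P=(P+U)/U$, a $1$-subspace of the $n$-dimensional quotient, lies in the $i$-member of the quotient chamber. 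Thus $N_-=tq^n\cdot z_n\cdot b$, where $b$ counts chambers of an $n$-space in which a fixed $1$-subspace lies in the $i$-member. Consequently $q^iN_+=N_-$ reduces to the clean relation $a=q^{n-i}b$.

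The final step is to establish $a=q^{n-i}b$ by direct counting in $\ff_q^n$. Counting chambers by choosing their $j$-member through a fixed $1$-subspace shows that such a subspace lies in the $j$-member of exactly $\gauss{n-1}{j-1}z_jz_{n-j}$ chambers, whence $b=\gauss{n-1}{i-1}z_iz_{n-i}$ and $a=z_n-\gauss{n-1}{n-i-1}z_{n-i}z_i$. Using the symmetry $\gauss{n-1}{n-i-1}=\gauss{n-1}{i}$, the $q$-Pascal identity $\gauss{n}{i}=\gauss{n-1}{i}+q^{n-i}\gauss{n-1}{i-1}$, and the identity $z_n=\gauss{n}{i}z_iz_{n-i}$ (Lemma \ref{ChambeerExtensions} for a flag of type $\{i\}$), the relation $a=q^{n-i}b$ drops out. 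I expect the main obstacle to be bookkeeping rather than anything conceptual: correctly translating the incidence conditions $P\le C_{n-i}$ and $P\le C_{n+i}$ through the quotient by $U$, and keeping the two spread multiplicities $t$ and $tq^n$ straight; once this is done, the cancellation is forced by standard $q$-binomial algebra.
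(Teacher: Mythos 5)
Your proposal is correct and takes essentially the same approach as the paper: both reduce via Theorem \ref{PreprintJanSamKlaus} to showing $v_S^\top\chi^i_P=0$, count the chambers on which $\chi^i_P$ equals $q^i$ (spread members through $P$, of which there are $t$) and $-1$ (the $tq^n$ spread members avoiding $P$) by fixing $C_n=U$ and extending flags, and verify the cancellation --- your quotient factorization is precisely what Lemma \ref{ChambeerExtensions} encodes. The only cosmetic difference is the final verification, where you use complementary counting plus the $q$-Pascal identity $\gauss{n}{i}=\gauss{n-1}{i}+q^{n-i}\gauss{n-1}{i-1}$, while the paper counts the $(n-i)$-subspaces avoiding $P$ directly via Lemma \ref{numberofsubspaces} and invokes the symmetry $\gauss{n-1}{n-i}=\gauss{n-1}{i-1}$.
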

\begin{proof}
In view of Theorem \ref{PreprintJanSamKlaus} we have to show $v_S^\top \chi^i_P=0$. If $\mathcal{C}$ is the set of all chambers we have $v_S^\top \chi^i_P=\sum_{C\in\mathcal{C}}v_f(C)\chi^i_P(C)$.
Consider $i\in\{1,\dots,n\}$ and a $1$-subspace $P$. 
Then $P$ is a subspace of exactly $t$ members $U_1,\dots,U_t$ of $S$. By Lemma \ref{numberofsubspaces} each of these contains $\gauss{n-1}{n-i}q^i$ subspaces $V$ of dimension $n-i$ with $P\not\le V$. Given such spaces $U$ and $V$ with $U\le V$, Lemma \ref{ChambeerExtensions} shows that there exist $z_{n-i}z_iz_n$ chambers that contain $V$ und $U_i$. Hence, the number of chambers $C$ satisfying $v_S(C)=1$ and $\chi^i_P(C)=q^i$, that is $C_n\in\{U_1,\dots,U_t\}$ and $P\not\le C_{n-i}$ is
\begin{align*}
A:=t\cdot \gauss{n-1}{n-i}q^{n-i}z_iz_{n-i}z_n.
\end{align*}
A subspace $U\in S\setminus\{U_1,\dots,U_t\}$ does not contain $P$ and hence it is contained in $\gauss{n-1}{i-1}$ subspaces $W$ of dimension $n+i$ with $P\le W$. Lemma \ref{ChambeerExtensions} implies that for each pair $(U,W)$ there exist $z_{n-i}z_iz_n$ chambers that contain $U$ und $W$. Hence the number of chambers satisfying $v_S(C)=1$ and $\chi^i_P(C)=-1$, that is $C_n\in S\setminus\{U_1,\dots,U_t\}$ and $P\le C_{n+i}$ is
\begin{align*}
B:=(|S|-t)\gauss{n-1}{i-1}z_iz_{n-i}z_n=t\gauss{n-1}{i-1}q^nz_iz_{n-i}z_n.
\end{align*}
Since $v_S^\top\chi^i_P=Aq^i-B$, it follows that $v_S^\top\chi^i_P=0$.
Since this holds for all $i$ and $P$, Theorem \ref{PreprintJanSamKlaus} implies that $v_S$ is an antidesign. Since every $n$-subspace lies in $z_n(q)^2$ chambers of $\ff_q^{2n}$, it follows that we have $\allones^\top v_S=|S|\cdot z_n(q)^2=t(q^n+1)z_n(q)^2$.
\end{proof}

One example for an $t$-fold $n$-spread is given by the maximal totally isotropic subspaces with respect to a non-degenerate symplectic form. In the next subsection we introduce a second possibility to construct an antidesign from a symplectic form.
The following corollary of Theorem \ref{thm:SpreadAntidesigns} follows immediately from Proposition \ref{Intersection_F_with_antidesing}.

\begin{cor} \label{C: spreads intersection size}
If $S$ is a $t$-fold $n$-spread of  $\PG(2n,q)$ and $\EKR$ is a maximum EKR-set of $\Gamma_{2n}(q)$, then $v_S^\top \charvec_\EKR=t\cdot z_n(q)^2$.
\end{cor}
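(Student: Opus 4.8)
The plan is to derive the corollary as an immediate consequence of the two results it cites, namely Theorem~\ref{thm:SpreadAntidesigns} and Proposition~\ref{Intersection_F_with_antidesing}. Since the statement is explicitly flagged as following ``immediately'' from the proposition, the entire content is just a substitution; the only real work is bookkeeping of the arithmetic factors.

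First I would invoke Theorem~\ref{thm:SpreadAntidesigns}, which guarantees that for a $t$-fold $n$-spread $S$ the map $v_S$ is an antidesign, and moreover records the value $\allones^\top v_S=t(q^n+1)z_n(q)^2$. This is exactly the hypothesis needed to apply the general intersection formula, so the second step is to feed $v=v_S$ into Proposition~\ref{Intersection_F_with_antidesing}. That proposition states that for any maximum EKR-set $\EKR$ one has
\begin{align*}
\charvec_\EKR^\top v_S=\frac{\allones^\top v_S}{q^n+1}.
\end{align*}

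The final step is purely computational: substituting the known value of $\allones^\top v_S$ gives
\begin{align*}
\charvec_\EKR^\top v_S=\frac{t(q^n+1)z_n(q)^2}{q^n+1}=t\cdot z_n(q)^2,
\end{align*}
and since $\charvec_\EKR^\top v_S=v_S^\top\charvec_\EKR$ this is precisely the claimed equality. One minor point of care is that the corollary is phrased for a spread of $\PG(2n,q)$ rather than of $\ff_q^{2n}$, so I would note that these are the same object under the standard projective/vector-space dictionary, ensuring the hypotheses of Theorem~\ref{thm:SpreadAntidesigns} genuinely apply.

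I do not anticipate any genuine obstacle here, as both ingredients are already established in the excerpt; the proof is a one-line chain of equalities. If any subtlety were to arise, it would lie in confirming that $\EKR$ being a \emph{maximum} EKR-set is exactly the independence-number condition $|\EKR|=\alpha(\Gamma_{2n}(q))$ required by Proposition~\ref{Intersection_F_with_antidesing}, which follows directly from Theorem~\ref{thm:maxsize}. Thus the whole argument reduces to citing the two prior results and simplifying the fraction.
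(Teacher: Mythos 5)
Your proof is correct and is exactly the paper's argument: the paper simply notes that the corollary ``follows immediately from Proposition \ref{Intersection_F_with_antidesing}'' applied to the antidesign $v_S$ of Theorem \ref{thm:SpreadAntidesigns}, which is precisely your substitution $\charvec_\EKR^\top v_S=\frac{t(q^n+1)z_n(q)^2}{q^n+1}=t\cdot z_n(q)^2$. Your added remarks on the $\PG(2n,q)$ versus $\ff_q^{2n}$ phrasing and on the maximality hypothesis are sound and require no changes.
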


\subsection{Antidesigns from alternating forms}

Let $f$ be a non-degenerate alternating form on the $\ff_q$ vector space $\ff_q^{2n}$. The maximal totally isotropic subspaces have dimension $n$, see Section 6.5 in \cite{Cameron}. Let $\perp$ denote the related polarity.
We call a chamber $C$ of $\ff_q^{2n}$ symplectic with respect to $f$, if $C_n$ is totally isotropic (and hence $C_i$ is for all $i\le n$) and if $C_{2n-i}=C_i^\perp$ for $i=1,\dots,n$.

\begin{notation}
Let $n\ge 1$ be an integer and let $v_f$ be the map from the set of all chambers of $\ff_q^{2n}$ to $\mathbb{Q}$ with $v_f(C)=1$, if the chamber $C$ is symplectic, and $v_f(C)=0$ otherwise.
\end{notation}

\begin{theorem}
For a non-degenerate alternating form $f$ on the $\ff_q$ vector space $\ff_q^{2n}$, the map $v_f$ is an antidesign with 
$\allones^\top v_f=z_n(q)\prod_{i=1}^{n}(q^i+1)$.
\end{theorem}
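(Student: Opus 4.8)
The plan is to mirror the structure of the proof of Theorem \ref{thm:SpreadAntidesigns}: by Theorem \ref{PreprintJanSamKlaus} it suffices to show $v_f^\top\chi^i_P=0$ for every $1$-subspace $P$ and every $i\in\{1,\dots,n\}$, and then to count the symplectic chambers to obtain $\allones^\top v_f$. Since $\chi^i_P(C)$ is supported on chambers with $P\le C_{n+i}$ and $P\not\le C_{n-i}$, I only need to enumerate symplectic chambers whose behaviour relative to $P$ places them in one of the two nonzero cases of $\chi^i_P$, weighting the first case by $q^i$ and the second by $-1$. The key structural fact is that a symplectic chamber is determined entirely by its totally isotropic half, that is by the flag $C_1\subset\dots\subset C_n$ with $C_n$ totally isotropic, because the top half is forced via $C_{2n-j}=C_j^\perp$. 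So each symplectic chamber corresponds bijectively to a chamber of the totally isotropic flag variety, and the polarity $\perp$ is orientation-reversing on dimensions.

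First I would fix $P$ and split the symplectic chambers according to the position of $P$. For the case $\chi^i_P(C)=q^i$ I need $P\le C_n$ but $P\not\le C_{n-i}$; here $P$ lies in the isotropic part, so I would count isotropic flags $C_1\subset\dots\subset C_n$ with $C_n$ totally isotropic and $P\le C_n$, $P\not\le C_{n-i}$. For the case $\chi^i_P(C)=-1$ I need $P\le C_{n+i}=C_{n-i}^\perp$ but $P\not\le C_n$; equivalently, writing this in terms of the isotropic flag, $P$ is perpendicular to $C_{n-i}$ but $P$ is not contained in $C_n$ (and automatically $P\not\le C_{n-i}$). The crucial point is a symmetry coming from the polarity: the condition $P\le C_{n-i}^\perp$ is equivalent to $C_{n-i}\le P^\perp$, and since $f$ is non-degenerate $P^\perp$ is a hyperplane containing $P$. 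I would reorganise both counts as counts of totally isotropic $n$-subspaces $C_n$ together with an extension to a full isotropic flag, fibred over the relevant incidences with $P$ and $P^\perp$, and then compute each fibre using Lemma \ref{basicnumbers}, Lemma \ref{numberofsubspaces}, and Lemma \ref{ChambeerExtensions} applied inside the polar space of isotropic subspaces.

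The main obstacle, and the part requiring the most care, is the combinatorics of totally isotropic subspaces through a fixed point $P$: unlike in the spread proof, the relevant ambient objects are not the full Gaussian-coefficient counts but their symplectic analogues, since an isotropic subspace can only be extended within $P^\perp/P$, which carries an induced non-degenerate alternating form on a space of dimension $2(n-1)$. I would therefore set up a recursive or quotient description: given an isotropic $C_{n-i}$ with $P\not\le C_{n-i}$, the extensions $C_{n-i}\subset\dots\subset C_n$ to a maximal isotropic flag live in $C_{n-i}^\perp/C_{n-i}$, again a symplectic space, and the two cases correspond precisely to whether or not $P$ meets this flag. Matching the two contributions $A q^i$ and $B$ then reduces to an identity between symplectic subspace counts weighted by powers of $q$, which I expect to collapse exactly as in Theorem \ref{thm:SpreadAntidesigns} once the polarity symmetry $P\le C_{n-i}^\perp \iff C_{n-i}\le P^\perp$ is used to pair the two cases.

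Finally, for $\allones^\top v_f$ I would count all symplectic chambers directly: choose a maximal totally isotropic flag $C_1\subset\dots\subset C_n$, which is a chamber of the symplectic polar space, and note the top half is then determined. The number of totally isotropic $n$-subspaces is $\prod_{i=1}^{n}(q^i+1)$, and refining each to a complete isotropic flag contributes a factor $z_n(q)$ by Lemma \ref{ChambeerExtensions} applied to an $n$-dimensional isotropic subspace; this yields $\allones^\top v_f=z_n(q)\prod_{i=1}^{n}(q^i+1)$, matching the claimed value.
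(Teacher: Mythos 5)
Your proposal follows essentially the same route as the paper's proof: reduce to $v_f^\top\chi^i_P=0$ via Theorem \ref{PreprintJanSamKlaus}, use the fact that a symplectic chamber is determined by its isotropic flag $C_1\subset\dots\subset C_n$, translate the case $\chi^i_P(C)=-1$ through the polarity into $C_{n-i}\le P^\perp$, count both cases by fibring over totally isotropic $n$-subspaces (using that a point lies on $\prod_{i=1}^{n-1}(q^i+1)$ generators, via the quotient $P^\perp/P$), and obtain $\allones^\top v_f=z_n(q)\prod_{i=1}^{n}(q^i+1)$ exactly as the paper does. The only step you leave as an expectation rather than carrying out is the final cancellation, which in the paper is the one-line identity $Aq^i=B$ with $A=g_{n-1}\gauss{n-1}{n-i}q^{n-i}z_iz_{n-i}$ and $B=q^ng_{n-1}\gauss{n-1}{n-i}z_iz_{n-i}$, a routine evaluation given your setup.
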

\begin{proof}
From Theorem 6.14 in \cite{Cameron} we see that the number of totally isotropic $n$-subspaces is $g_n:=\prod_{i=1}^n(q^i+1)$ and that every $1$-subspace is contained in $g_{n-1}:=\prod_{i=1}^{n-1}(q^i+1)$ of these.

In view of Theorem \ref{PreprintJanSamKlaus} we have to show that $v_f^\top \chi^i_P=0$ for all $i=1,\dots,n$ and for every $1$-subspace $P$ of $\ff_q^{2n}$. Given $P$ and $i$, we first count the number $A$ of chambers $C$ of $\ff_q^{2n}$ that satisfy $\chi^i_P(C)=q^i$ and $v_f(C)=1$, that is the number of symplectic chambers satisfying $P\le C_n$, $P\not\le C_{n-i}$.
There are $g_{n-1}$ totally isotropic $n$-subspaces $U$ containing $P$ and each such $U$ contains $\gauss{n-1}{n-i}q^{n-i}$ subspaces $V$ of dimension $n-i$ that do not contain $P$. Lemma \ref{ChambeerExtensions} implies that for every $U$ and $V$ with $V\le U$ there are $z_iz_{n-i}$ symplectic chambers containing $U$ and $V$. This gives 
\begin{align*}
A=g_{n-1}\gauss{n-1}{n-i}q^{n-i}z_iz_{n-i}.
\end{align*}

Next we count the number $B$ of chambers $C$ of $\ff_q^{2n}$ that satisfy $\chi^i_P(C)=-1$ and $v_f(C)=1$, that is the number of symplectic chambers satisfying $P\le C_{n+i}$ and $P\not\le C_n$, which is equivalent to $C_{n-i}\le P^\perp$ and $P\not\le C_n$. There are $g_n-g_{n-1}=q^ng_{n-1}$  totally isotropic $n$-subspaces $U$ that do not contain $P$. For such a subspace $U$ the subspace $P^\perp\cap U$ has dimension $n-1$ and hence $U$ contains $\gauss{n-1}{n-i}$ subspaces $V$ of dimension $n-i$ with $V\le P^\perp$. Lemma \ref{ChambeerExtensions} shows that each of the resulting flags $\{V,U\}$ is contained in $z_iz_{n-i}$ symplectic chambers. This gives 
\begin{align*}
B=q^ng_{n-1}\gauss{n-1}{n-i}z_iz_{n-i}.
\end{align*}

Since $v_f^\top\chi^i_P=Aq^i-B$, it follows that $v_f^\top\chi^i_P=0$. 
This completes the proof that $v_f$ is an antidesign. 
Finally, we have that $\allones^\top v_f$ is the number of symplectic chambers in $\ff_q^{2n}$. It follows that $\allones^\top v_f=z_n(q)\cdot g_n$. 
\end{proof}

The following corollary follows immediately from Theorem \ref{thm:SpreadAntidesigns} and Proposition \ref{Intersection_F_with_antidesing}.

\begin{cor}\label{cor:symplectic}
If $f$ is a non-degenerate alternating form on the $\ff_q$ vector space $\ff_q^{2n}$ and $\EKR$ is a maximum EKR-set of $\Gamma_{2n}(q)$, then $v_f^\top\charvec_\EKR=z_n(q)\cdot \prod_{i=1}^{n-1}(q^i+1)$.
\end{cor}

\begin{remark}
Note that the set of symplectic chambers is in one-to-one correspondence with the chambers of the polar space and moreover the definition of oppositeness agrees \cite{AlgebraicApproach}. Hence an EKR-set of chambers in $\ff_q^{2n}$ induces an EKR-set of chambers in the polar space, and if the EKR-set in $\ff_q^{2n}$ is a maximum one, then the size of the induced EKR is given by Corollary \ref{cor:symplectic} which is maximal in the polar space \cite{AlgebraicApproach}.
\end{remark}

\subsection{Antidesigns from unitary forms}

In this subsection we assume that the order $q$ of the field $\ff_q$ is a square, and we let $f$ be a non-degenerate unitary form on the $\ff_q$ vector space $\ff_q^{2n}$. Let $\perp$ denote the related polarity. Subspaces $U$ of $\ff_q^{2n}$ with $f(u,v)=0$ for all $u,v\in U$ are called \emph{totally isotropic}; this is equivalent to $U\le U^\perp$. The maximal totally isotropic subspaces will be called \emph{generators}, by Lemma 23.3.1 of \cite{Hirschfeld&Thas} they have dimension $n$.
By $t_n(q)$ we denote the number of generators of $\ff_q^{2n}$ with respect to a non-degenerate unitary form. The following lemma is well-known.

\begin{lemma}
We have 
\begin{align*}
t_n(q)=\prod_{j=1}^n(\sqrt q\cdot q^{j-1}+1).
\end{align*}
If $f$ is a non-degenerate unitary form on $\ff_q^{2n}$, then each totally isotropic subspace of dimension one is contained in $t_{n-1}(q)$ generators. \end{lemma}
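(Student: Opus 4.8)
The plan is to count the generators (maximal totally isotropic subspaces) of a non-degenerate unitary form on $\ff_q^{2n}$ by a standard two-step recursion, mirroring the approach that Lemma \ref{basicnumbers} uses for chambers: fix a totally isotropic $1$-subspace, pass to a quotient on which a smaller unitary form is induced, and multiply the number of choices at each stage. First I would recall from the theory of Hermitian polar spaces (e.g.\ Lemma 23.3.1 and the surrounding material in \cite{Hirschfeld&Thas}, with $q$ a square so that $\sqrt q$ makes sense) the count of totally isotropic $1$-subspaces: a non-degenerate unitary form on $\ff_q^{2n}$ has exactly $(\sqrt q\cdot q^{2n-1}+1)(q^{2n-1}-1)/(q-1)$ isotropic points, though for the recursion it is cleaner to work with the number of isotropic points directly from the standard formula rather than to expand it.

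Next I would set up the recursion. Let $P$ be any totally isotropic $1$-subspace. Every generator $U$ of dimension $n$ containing $P$ satisfies $P\le U\le P^\perp$, and the induced form on the $(2n-2)$-dimensional space $P^\perp/P$ is again non-degenerate unitary. Generators through $P$ correspond bijectively to generators (dimension $n-1$) of $P^\perp/P$, so the number of generators through a fixed isotropic point is exactly $t_{n-1}(q)$. This establishes the second assertion of the lemma as a structural fact, and simultaneously gives the key identity needed for the count: the number of (isotropic point, generator) incident pairs equals $t_n(q)$ times the number of isotropic points inside a generator, and also equals the number of isotropic points times $t_{n-1}(q)$. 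Counting isotropic points inside a fixed generator is easy, since a generator is totally isotropic, so all its $\gauss{n}{1}=(q^n-1)/(q-1)$ points are isotropic.

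From the double count I obtain the recurrence
\begin{align*}
t_n(q)\cdot\frac{q^n-1}{q-1}=(\text{number of isotropic points})\cdot t_{n-1}(q).
\end{align*}
Substituting the standard value for the number of isotropic points and simplifying should collapse the $(q^n-1)/(q-1)$ factor, leaving $t_n(q)=(\sqrt q\cdot q^{n-1}+1)\,t_{n-1}(q)$. With the base case $t_0(q)=1$ (or $t_1(q)=\sqrt q+1$, the number of isotropic points on a non-degenerate Hermitian line), induction on $n$ then yields the closed form $t_n(q)=\prod_{j=1}^n(\sqrt q\cdot q^{j-1}+1)$.

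I expect the main obstacle to be purely bookkeeping rather than conceptual: one must invoke the correct count of isotropic points of a non-degenerate unitary form on $\ff_q^{2n}$ and verify that the induced form on $P^\perp/P$ is again non-degenerate unitary of the right dimension, so that the recursion is legitimate. Care is needed because $q$ is a square and the unitary parameters are expressed in $\sqrt q$; the algebraic simplification that makes the $(q^n-1)/(q-1)$ factor cancel cleanly is the one step where an arithmetic slip is easy, so I would double-check the telescoping against the stated product formula. Alternatively, if one prefers to avoid quoting a separate point-count, the whole lemma can be read off directly from the known generator count for Hermitian polar spaces in \cite{Hirschfeld&Thas}, reducing the proof to citing that reference and the quotient argument for the incidence statement.
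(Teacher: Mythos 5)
Your proposal is correct in substance but reaches the product formula by a different route than the paper. The paper's proof is essentially a citation: the closed form for $t_n(q)$ is quoted directly from Lemma 23.3.2 of \cite{Hirschfeld&Thas}, and the second assertion is proved exactly as you prove it, by observing that $f$ induces a non-degenerate unitary form on $P^\perp/P\simeq\ff_q^{2n-2}$, so that generators through a totally isotropic point $P$ correspond bijectively to generators of the quotient, of which there are $t_{n-1}(q)$. Your quotient argument is thus identical to the paper's; what differs is that you then feed it into a double count of incident pairs (totally isotropic point, generator) to obtain the recurrence $t_n(q)=(\sqrt q\cdot q^{n-1}+1)\,t_{n-1}(q)$ and deduce the closed form by induction, whereas the paper does not derive the formula at all. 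Your version is more self-contained in spirit, reducing the generator count to the more elementary isotropic-point count and making explicit that the two assertions of the lemma are linked; but it is not citation-free, since the point count must itself be quoted, so the gain is mainly expository, and the paper's version is shorter. (Your closing alternative, citing the generator count outright, is exactly what the paper does.)

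One slip to correct: the isotropic-point count you quote, $(\sqrt q\cdot q^{2n-1}+1)(q^{2n-1}-1)/(q-1)$, is wrong. The non-degenerate Hermitian variety in $\ff_q^{2n}$, $q$ a square, has
\begin{align*}
\frac{(q^n-1)(\sqrt q\cdot q^{n-1}+1)}{q-1}
\end{align*}
totally isotropic $1$-subspaces; for $n=1$ your expression would give $q\sqrt q+1$ instead of the correct $\sqrt q+1$. With the correct value, the factor $(q^n-1)/(q-1)$ — the number of $1$-subspaces of a generator, all of which are isotropic since the generator is totally isotropic — cancels in your double count, and the recurrence telescopes to $t_n(q)=\prod_{j=1}^n(\sqrt q\cdot q^{j-1}+1)$ as claimed, so the rest of your argument goes through unchanged.
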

\begin{proof}
The formula for $t_n(q)$ can be found in Lemma 23.3.2 of \cite{Hirschfeld&Thas}. Let $P$ be a subspace of dimension one. If $P$ is totally isotropic, then $f$ induces a non-degenerate unitary form in the factor space $P^\perp/P\simeq \ff_q^{2n-2}$, which implies that $P$ lies on $t_{n-1}(q)$ generators. 
\end{proof}

We call a chamber of $\ff_q^{2n}$ \emph{hermitian} with respect to $f$, if $C_{2n-i}=C_i^\perp$ for $i=1,\dots,n$, which implies that $C_1,\dots,C_n$ are totally isotropic. These are in one-to-one correspondence with the chambers of the polar space and moreover the definition of oppositeness agrees \cite{AlgebraicApproach}.

Recall that $z_n(q)$ is the number of chambers of $\ff_q^n$. 

\begin{notation}\label{HermitianAntidesign}
Let $n\ge 1$ be an integer and let $f$ be a non-degenerate unitary form on $\ff_q^{2n}$. Put $k:=z_n(q)-1$. Let $\CC$ be the set of chambers of $\ff_q^{2n}$ and let $v_f$ be the map from $\CC$ to $\mathbb{Q}$ with
\begin{align*}
v_f(C)=\left\{\begin{array}{rl}
-k & \text{if $C_{n+i}=C_{n-i}^\perp$ for $i=0,\dots,n-1$,}
\\
1 & \text{if $C_n=C_n^\perp$ but $C_{n+i}\not=C_{n-i}^\perp$ for some $i\in\{1,\dots,n-1\}$,}
\\
0 &  \text{if $C_n\not=C_n^\perp$,}
\end{array}
\right.
\end{align*}
for all chambers $C$.
\end{notation}

\begin{theorem}
For a non-degenerate unitary form $f$ on the $\ff_q$ vector space $
\ff_q^{2n}$, $q$ a square, the map $v_f$ is an antidesign satisfying $\allones^\top  v_f=0$.
\end{theorem}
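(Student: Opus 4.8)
The plan is to verify, following Theorem \ref{PreprintJanSamKlaus}, that $v_f^\top\chi^i_P=0$ for every $i\in\{1,\dots,n\}$ and every $1$-subspace $P$, and separately that $\allones^\top v_f=0$. The function $v_f$ takes value $-k$ on the fully hermitian chambers (those with $C_{n+i}=C_{n-i}^\perp$ for all $i$), value $1$ on chambers that are merely ``half-hermitian'' (totally isotropic middle subspace $C_n=C_n^\perp$ but failing the full perpendicularity condition somewhere), and $0$ otherwise. My first step is to record that the number of chambers with a totally isotropic $C_n$ equals $t_n(q)\,z_n(q)^2$, since there are $t_n(q)$ generators and each lies in $z_n(q)^2$ chambers of $\ff_q^{2n}$ (using that below the generator we may choose any chamber of it, and above it any chamber of the quotient, both counted by $z_n(q)$). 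Among these, the fully hermitian chambers number $t_n(q)\,z_n(q)$, because once the totally isotropic part $C_1\subset\cdots\subset C_n$ is chosen (in $z_n(q)$ ways per generator) the perpendicularity condition forces $C_{n+i}=C_{n-i}^\perp$ uniquely. Hence the half-hermitian chambers number $t_n(q)z_n(q)^2-t_n(q)z_n(q)=t_n(q)z_n(q)(z_n(q)-1)=t_n(q)z_n(q)\,k$. With this, $\allones^\top v_f=(-k)\cdot t_n(q)z_n(q)+1\cdot t_n(q)z_n(q)\,k=0$, giving the stated count immediately.

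The substance is the antidesign condition. Following the pattern of the symplectic proof, I would split the support of $\chi^i_P$ by its value. Write $v_f^\top\chi^i_P=q^i\cdot(\text{weighted count of chambers with }P\le C_n,\ P\not\le C_{n-i}) - 1\cdot(\text{weighted count with }P\le C_{n+i},\ P\not\le C_n)$, where ``weighted'' means the chamber contributes its $v_f$-value $-k$ or $+1$. The key reduction is to handle the fully hermitian and the half-hermitian chambers together by first computing, for the \emph{unweighted} indicator of a totally-isotropic middle subspace, the two relevant counts; then correct for the $-k$ versus $+1$ weighting by subtracting $(k+1)=z_n(q)$ times the counts restricted to the fully hermitian chambers. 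This works because $v_f=\charvec_{\{C_n \text{ t.i.}\}} - z_n(q)\,\charvec_{\{\text{fully hermitian}\}}$ on the relevant support. So I would prove two separate sub-identities: that the indicator $\charvec_{\{C_n \text{ t.i.}\}}$ satisfies $\chi^i_P$-orthogonality, and likewise that $z_n(q)\,\charvec_{\{\text{fully hermitian}\}}$ does; their difference is then automatically orthogonal.

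For the first sub-identity one counts chambers with totally isotropic $C_n$, which is essentially the symplectic-type count with generator numbers $t_n(q),t_{n-1}(q)$ in place of $g_n,g_{n-1}$; the $P\le C_n$ branch gives $t_{n-1}(q)\gauss{n-1}{n-i}q^{n-i}z_iz_{n-i}$ (choosing $C_n\supset P$ totally isotropic, then $C_{n-i}\not\ni P$ inside it, then freely extending), while the $P\le C_{n+i},\ P\not\le C_n$ branch requires $C_{n-i}\le P^\perp$ and is counted as $q^n t_{n-1}(q)\gauss{n-1}{n-i}z_iz_{n-i}$, so multiplying the first by $q^i$ matches the second. For the fully hermitian sub-identity I must be more careful, since a fully hermitian chamber is determined by its totally isotropic half and the two conditions $P\le C_n,P\not\le C_{n-i}$ versus $P\le C_{n+i},P\not\le C_n$ must be re-expressed purely in terms of the totally isotropic flag; using $C_{n+i}=C_{n-i}^\perp$, the condition $P\le C_{n+i}$ becomes $C_{n-i}\le P^\perp$. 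The main obstacle I anticipate is this second count: one must carefully verify that the totally isotropic flag extensions are counted by $z_i z_{n-i}$ consistently and, crucially, that the perpendicularity constraint does not interact with the $P$-incidence conditions in a way that breaks the clean $q^i$-balance. Assuming these counts align exactly as in the symplectic case — which the parallel structure of totally isotropic subspaces in the unitary geometry strongly suggests — the relation $v_f^\top\chi^i_P=0$ follows for all $i$ and $P$, and Theorem \ref{PreprintJanSamKlaus} finishes the proof.
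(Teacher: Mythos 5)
Your computation of $\allones^\top v_f=0$ agrees with the paper, and the decomposition $v_f=\charvec_{T}-z_n(q)\,\charvec_{H}$, where $T$ denotes the set of chambers with $C_n=C_n^\perp$ and $H$ the set of hermitian chambers, is correct. However, the step your whole argument rests on --- that $\charvec_{T}$ and $\charvec_{H}$ are \emph{each} orthogonal to every $\chi^i_P$, so that their difference is ``automatically'' orthogonal --- is false, and it fails exactly at the point you flagged as the anticipated obstacle. For $P$ totally isotropic, the number of generators \emph{not} containing $P$ is $t_n-t_{n-1}=\sqrt q\,q^{n-1}t_{n-1}$, not $q^n t_{n-1}$ as your count asserts: the symplectic identity $g_n-g_{n-1}=q^n g_{n-1}$ has no unitary analogue. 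Moreover, for a chamber of $T$ that is not hermitian, the condition $P\le C_{n+i}$ cannot be rewritten as $C_{n-i}\le P^\perp$, and the free upper half contributes a factor $z_n$ that your counts omit. Doing the count correctly gives
\begin{align*}
\charvec_{T}^\top\chi^i_P
&=q^i\,t_{n-1}\gauss{n-1}{n-i}q^{n-i}z_iz_{n-i}z_n-\sqrt q\,q^{n-1}\,t_{n-1}\gauss{n-1}{i-1}z_iz_{n-i}z_n
\\
&=t_{n-1}\gauss{n-1}{i-1}z_iz_{n-i}z_n\,q^{n-1}(q-\sqrt q)\neq 0,
\end{align*}
and similarly $\charvec_{H}^\top\chi^i_P=t_{n-1}\gauss{n-1}{n-i}z_iz_{n-i}\,q^{n-1}(q-\sqrt q)\neq 0$. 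In addition, for $P$ not totally isotropic --- a case your proposal never treats --- no generator contains $P$, so $\chi^i_P$ takes only the values $0$ and $-1$ on both supports and both pairings are strictly negative. So neither sub-identity holds: unlike in the symplectic case, the unitary polar-space chambers do not form an antidesign by themselves, which is precisely why the paper must use the weighting by $-k$ and $1$.

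What is true, and what the paper actually proves, is that the two defects cancel: $\charvec_{T}^\top\chi^i_P=z_n(q)\cdot\charvec_{H}^\top\chi^i_P$ for every $i$ and every $P$ (compare the two displays above and use $\gauss{n-1}{i-1}=\gauss{n-1}{n-i}$; the non-isotropic case works the same way with $t_n$ in place of $\sqrt q\,q^{n-1}t_{n-1}$). The structural reason is that within each class of chambers on which $\chi^i_P$ is constant, the chambers of $T$ are obtained from the hermitian ones by freeing the upper half, so each relevant count for $T$ is exactly $z_n(q)$ times the corresponding count for $H$; in the paper's notation these are the identities $A_{12}=kA_{11}$ and $A_{22}=kA_{21}$ (and $A=z_nB$ when $P$ is not isotropic), from which $-kq^iA_{11}+q^iA_{12}+kA_{21}-A_{22}=0$. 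Your decomposition can be salvaged by replacing the two false sub-identities with this proportionality statement, but as written the proposal's main argument does not go through.
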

\begin{proof}
We write $t_i$ for $t_i(q)$.
The number of chambers $C$ with $C_n=C_n^\perp$, that is for which $C_n$ is totally isotropic, is $t_n\cdot z_n(q)^2$. Exactly $t_nz_n(q)$ of these satisfy $C_{n+i}=C_i^\perp$ for $i=0,\dots,n-1$, and the remaining $t_nz_n(q)(z_n(q)-1)$ do not. Since $k=z_n(q)-1$, it follows that $\allones^\top  v_f=0$.
%} 

We now prove that $v_f$ is an antidesign.
In view of Theorem \ref{PreprintJanSamKlaus} this requires
us
to show that $v_f^\top \chi^i_P=0$ for all $i=1,\dots,n$ and all $1$-subspace $P$ of $\ff_q^{2n}$. Recall that $v_f^\top \chi^i_P=\sum_{\mathcal{C}}v_f(C)\chi^i_P(C)$ where the sum is over all chambers. For the calculation of this sum, we distinguish two cases. Recall that $\chi^i_P\in\{0,-1,q^i\}$.
\bigskip

\textbf{Case 1.} $P$ is not totally isotropic. 

Consider a chamber $C$ with $\chi^i_P(C)\not=0$ and $v_f(C)\not=0$. Then $C_n$ is totally isotropic, so $P\not\le C_n$, hence $\chi^i_P(C)=-1$, that is $P\le C_{n+i}$ and $P\not\le C_n$. 

The number of these chambers $C$ is $A:=t_n\gauss{n-1}{i-1}z_nz_iz_{n-i}$ for the following reasons. There are $t_n$ generators $U$ and each generator is contained in $\gauss{n-1}{i-1}$ subspaces $V$ of dimension $n+i$ that also contain $P$. Furthermore, Lemma \ref{ChambeerExtensions} implies that each flag $\{U,V\}$ extends in $z_nz_iz_{n-i}$ ways to a chamber. Let $B$ be the number of these chambers that are hermitian.

We calculate $B$. If $C$ is a hermitian chamber with $P\le C_{n+i}$, then $P\le C_{n-i}^\perp$, that is $C_{n-i}\le P^\perp$. Given a generator $U$, then $P\not\le U=U^\perp$, so $P^\perp\cap U$ has dimension $n-1$ and contains $\gauss{n-1}{n-i}$ subspaces $V$ of dimension $n-i$. For these $P\le V^\perp$.
Lemma \ref{ChambeerExtensions} states that for every $U$ and $V$ with $V\le U$ and a generator $U$, there exists $z_iz_{n-i}$ hermitian chambers containing $U$ and $V$. Hence $B:=t_n\gauss{n-1}{n-i}z_iz_{n-i}$. It follows that 
\begin{align*}
v_f^\top \chi^i_P&=
B\cdot k+(A-B)\cdot (-1)=-A+B(k+1)
\\
&=t_n\gauss{n-1}{i-1}z_iz_{n-i}(k+1-z_n)
\end{align*}
and this is zero, since $k=z_n-1$.
\bigskip
      
\textbf{Case 2.} $P$ is totally isotropic.

\begin{enumerate}[1.]
\item We count the number $A_{11}$ of chambers $C$ of $\ff_q^{2n}$ that satisfy $\chi^i_P(C)=q^i$ and $v_f(C)=-k$, that is the number of hermitian chambers satisfying $P\le C_n$, $P\not\le C_{n-i}$.

There are $t_{n-1}$ generators $U$ containing $P$ and each such $U$ contains $\gauss{n-1}{n-i}q^{n-i}$ subspaces $V$ of dimension $n-i$ that do not contain $P$. For every $U$ and $V$ with $V\le U$ there are $z_iz_{n-i}$ hermitian chambers containing $U$ and $V$. This gives 
\begin{align*}
A_{11}=t_{n-1}\gauss{n-1}{n-i}q^{n-i}z_iz_{n-i}.
\end{align*}

\item We count the number $A_{12}$ of chambers $C$ of $\ff_q^{2n}$ that satisfy $\chi^i_P(C)=q^i$ and $v_f(C)=1$, that is the number of chambers that are not hermitian and satisfy $P\le C_n$, $P\not\le C_{n-i}$.
    
    The number of all chambers $C$ with $P\le C_n$, $P\not\le C_{n-i}$ and for which $C_n$ is a generator is $t_{n-1}\gauss{n-1}{n-i}q^{n-i}z_iz_{n-i}z_n$, since there are $t_{n-1}$ choices for the generator and each contains $\gauss{n-1}{n-i}q^{n-i}$ subspaces of rank $n-i$ that do not contain $P$. Since $A_{11}$ of these are hermitian, it follows that
    \begin{align*}
    A_{12}&=t_{n-1}\gauss{n-1}{n-i}q^{n-i}z_iz_{n-i}z_n- A_{11} %t_{n-1}\gauss{n-1}{n-i}q^{n-i}z_iz_{n-i}
    \\
    &=t_{n-1}\gauss{n-1}{n-i}q^{n-i}z_iz_{n-i}(z_n-1).
    \end{align*}

\item We count the number $A_{21}$ of chambers $C$ of $\ff_q^{2n}$ that satisfy $\chi^i_P(C)=-1$ and $v_f(C)=-k$, that is the number of hermitian chambers satisfying $P\le C_{n+i}$ and $P\not\le C_n$, which is equivalent to $C_{n-i}\le P^\perp$ and $P\not\le C_n$. 
    
    There are $t_n-t_{n-1}=\sqrt qq^{n-1}t_{n-1}$  generators $U$ that do not contain $P$. For such a generator $U$ the subspace $P^\perp\cap U$ has dimension $n-1$ and hence $U$ contains $\gauss{n-1}{n-i}$ subspaces $V$ of dimension $n-i$ with $V\le P^\perp$. Each of the resulting flags $\{V,U\}$ is contained in $z_iz_{n-i}$ hermitian chambers. This gives 
    \begin{align*}
 A_{21}=\sqrt qq^{n-1}t_{n-1}\gauss{n-1}{n-i}z_iz_{n-i}. 
\end{align*}

\item We count the number $A_{22}$ of chambers $C$ of $\ff_q^{2n}$ that satisfy $\chi^i_P(C)=-1$ and $v_f(C)=1$, that is the number of chambers $C$ that are not hermitian such that $C_n$ is a generator, $P\le C_{n+i}$ and $P\not\le C_n$.
    
    The number of all chambers $C$ such that $C_n$ is a generator, $P\le C_{n+i}$ and $P\not\le C_n$ is equal to $\sqrt qq^{n-1}t_{n-1}\gauss{n-1}{i-1}z_iz_{n-1}z_n$, since there are $t_n-t_{n-1}=\sqrt qq^{n-1}t_{n-1}$ generators $U$ that do not contain $P$ and each is contained $\gauss{n-1}{i-1}$ subspaces of dimension $n+i$ that contain $P$. Since $A_{21}$ of these are hermitian, it follows that
    \begin{align*}
    A_{22}&=\sqrt qq^{n-1}t_{n-1}\gauss{n-1}{i-1}z_iz_{n-1}z_n-A_{21}
    \\
    &=\sqrt qq^{n-1}t_{n-1}\gauss{n-1}{i-1}z_iz_{n-i}(z_n-1)
    \end{align*}
\end{enumerate}
Finally, we have 
\begin{align*}
v_f^\top \chi^i_P&=
A_{11}\cdot (-kq^i)+A_{12}\cdot q^i+A_{21}\cdot k+A_{22}\cdot (-1)
\end{align*}
and this is zero, since $A_{11}=kA_{12}$ and $A_{21}=kA_{22}$.

Hence, $v_f^\top \chi^i_P=0$ in Case 1 as well as in Case 2. Therefore $v_f$ is an antidesign.
\end{proof}

\begin{cor}\label{cor:unitary}

If $f$ is a non-degenerate unitary form on the $\ff_q$ vector space $\ff_q^{2n}$, $q$ square, and $\EKR$ is a maximum EKR-set of $\Gamma_{2n}(q)$, then $v_f^\top\charvec_\EKR=0$.
\end{cor}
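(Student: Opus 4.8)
The plan is to read off the result directly from Proposition \ref{Intersection_F_with_antidesing}, applied to the specific antidesign $v_f$ furnished by the preceding theorem. All of the genuine work has already been done there: the theorem establishes both that $v_f$ lies in the orthogonal complement of the eigenspace $E$ (so that $e^\top v_f = 0$ for every $e\in E$, i.e.\ that $v_f$ is an antidesign) and that $\allones^\top v_f = 0$. What remains is a one-line substitution.

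Concretely, I would argue as follows. By hypothesis $\EKR$ is a maximum EKR-set, hence a largest independent set of $\Gamma_{2n}(q)$, so Proposition \ref{Intersection_F_with_antidesing} applies to the antidesign $v_f$ and yields
\begin{align*}
\charvec_\EKR^\top v_f = \frac{\allones^\top v_f}{q^n+1}.
\end{align*}
Since the inner product on $\mathbb{Q}^d$ is symmetric we have $v_f^\top \charvec_\EKR = \charvec_\EKR^\top v_f$, and since the theorem gives $\allones^\top v_f = 0$, the right-hand side vanishes. Therefore $v_f^\top \charvec_\EKR = 0$, as claimed.

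There is essentially no obstacle to overcome at the level of the corollary itself; its content is entirely inherited from the two results it cites, and the only point worth flagging is the symmetry $\charvec_\EKR^\top v_f = v_f^\top \charvec_\EKR$ needed to align the orientation of the inner product in the statement with that in Proposition \ref{Intersection_F_with_antidesing}. The real substance lies upstream, in verifying the antidesign property of $v_f$ through Theorem \ref{PreprintJanSamKlaus} and in the cancellations $A_{11}=kA_{12}$ and $A_{21}=kA_{22}$ (together with the Case 1 identity $k+1=z_n$) that force $v_f^\top\chi^i_P = 0$ for every $i$ and every $1$-subspace $P$.
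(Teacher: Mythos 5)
Your proof is correct and is precisely the paper's own (unwritten) argument: the corollary follows immediately by applying Proposition \ref{Intersection_F_with_antidesing} to the antidesign $v_f$ and substituting $\allones^\top v_f=0$ from the preceding theorem. The remark about symmetry of the inner product is a harmless formality; nothing further is needed.
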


\subsection{Subspace based antidesigns}

\begin{notation}
We say that two subspaces meet if they intersect non-trivially. On the other hand, we say that two subspaces are skew if they intersect trivially.
\end{notation}

In this subsection we introduce the antidesigns that will play a prominent role in the characterizations of the largest cocliques of $\Gamma_{2n}(q)$.

\begin{notation}\label{SubspaceAntidesign}
Let $n\ge 1$ be an integer and let $\CC$ be the set of chambers of $\ff_q^{2n}$. For $s\in\{1,\dots,n\}$ and each subspace $S$ of dimension $s$, we define $v_S:\CC\to \mathbb{Q}$ such that
\begin{align*}
v_S(C)=\left\{\begin{array}{ll}
q^{s(2n-s)-n} & \text{if $S=C_s$,}
\\
1 &  \text{if $S\cap C_{2n-s}=\{0\}$,}
\\0 & \text{otherwise.}
\end{array}
\right.
\end{align*}
\end{notation}

In order to show that this is an antidesign, we will show that $v_Q^\top \chi^i_P=0$ for all $i$ and $P$. We distinguish two cases, which will be handled separately in the next two lemmata.

\begin{lemma}\label{L: subspace antidesign 1}
For each $1$-subspace $P$ and each subspace $S$ with $1\le s:=\dim(S)\le n$ and $P\le S$, we have $v_S\chi^i_P=0$ for all $i\in\{1,\dots,n\}$.
\end{lemma}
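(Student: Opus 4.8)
The plan is to verify, via Theorem \ref{PreprintJanSamKlaus}, that the inner product $v_S^\top \chi^i_P$ vanishes by carefully counting chambers weighted by both $v_S$ and $\chi^i_P$, in the special case where the chosen $1$-subspace $P$ lies inside $S$. The key structural observation is that if $P \le S$, then a chamber $C$ with $S = C_s$ automatically satisfies $P \le C_s \le C_n$, and the relationship between $P$ and the intermediate spaces $C_{n-i}$ is constrained by how $S$ sits inside the chamber. I would first split the support of $v_S$ into the two weighted pieces: chambers with $S = C_s$ (weight $q^{s(2n-s)-n}$) and chambers with $S \cap C_{2n-s} = \{0\}$ (weight $1$); note that since $P \le S$, the second piece forces $P \not\le C_{2n-s}$, so $\chi^i_P(C) = 0$ there for every $i$. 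Thus only the first piece contributes, and the whole computation reduces to chambers with $C_s = S$.

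First I would fix $i \in \{1,\dots,n\}$ and analyze chambers $C$ with $C_s = S$, partitioning them according to the value of $\chi^i_P(C)$, which is governed by whether $P \le C_{n-i}$, $P \le C_n$ but $P \not\le C_{n-i}$, or $P \not\le C_{n+i}$. Because $P \le S = C_s$ and $s \le n$, we always have $P \le C_n$, so the value $-1$ (requiring $P \not\le C_n$) never occurs; only the contributions $q^i$ (when $P \not\le C_{n-i}$) and $0$ arise. So the sum $v_S^\top \chi^i_P$ equals $q^{s(2n-s)-n} \cdot q^i$ times the number of chambers $C$ with $C_s = S$ and $P \not\le C_{n-i}$ — but I must reconcile this against the claim that the total vanishes. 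The resolution must be that when $i$ is small relative to $s$ (specifically when $n - i \ge s$), the condition $P \le S = C_s \le C_{n-i}$ holds automatically, forcing $\chi^i_P(C) = 0$ for all such chambers and giving $v_S^\top \chi^i_P = 0$ trivially; the genuine content lies in the range where $n - i < s$, and there I expect the count of chambers with $P \not\le C_{n-i}$ to itself be zero, or the two surviving $\chi^i_P$-values to cancel, forcing the product to vanish. I would use Lemma \ref{numberofsubspaces} to count the $(n-i)$-subspaces $V$ with $S \cap V$ of a prescribed dimension and $P \not\le V$, and Lemma \ref{ChambeerExtensions} to count chamber extensions of each resulting flag through $S$.

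The main obstacle I anticipate is bookkeeping the case $n - i < s$ correctly: here $C_{n-i}$ is a genuine subspace of $S$ (since $C_{n-i} \subset C_s = S$), and $P \le C_s$ does not force $P \le C_{n-i}$. So there really are chambers with $P \not\le C_{n-i}$ contributing weight $q^i$, and there appears to be no offsetting $-1$ term, which seems to contradict vanishing. I expect the correct resolution is that whenever $n - i < s$, one cannot simultaneously have $C_s = S$ and $P \not\le C_{n-i}$ together with $P \le S$ unless $P$ lies in the $(n-i)$-subspace, because the chain $C_{n-i} \subset C_s = S$ with $P \le S$ forces, upon summing over all chambers through $S$, the number of chambers avoiding $P$ at level $n-i$ to be balanced. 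Concretely, I would count chambers of the space $S$ (of dimension $s$) that induce a flag avoiding $P$ at the $(n-i)$-th level and show the weighted count is zero; this is where I would lean on the identity relating $z_s$ to flag extensions in $S$ together with the Gaussian-coefficient recurrences from Remarks \ref{firstremarks}, and a small Abel-type summation over the possible dimensions of $C_{n-i} \cap P$. Verifying that the powers of $q$ and the Gaussian coefficients combine to the exact cancellation is the computational heart of the lemma, and I would organize it so that the factor $q^{s(2n-s)-n}$ from the weight precisely absorbs the $q^i$ and the subspace-counting powers, leaving a factor that vanishes identically.
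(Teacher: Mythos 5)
Your reduction in the first paragraph is where the proof breaks. From $P\le S$ and $S\cap C_{2n-s}=\{0\}$ you correctly deduce $P\not\le C_{2n-s}$, but you then conclude $\chi^i_P(C)=0$ for \emph{every} $i$; that conclusion needs $P\not\le C_{n+i}$, which only follows when $C_{n+i}\le C_{2n-s}$, i.e.\ when $s+i\le n$. When $s+i>n$ the containment is reversed ($C_{2n-s}\le C_{n+i}$), and there genuinely are chambers with $S\cap C_{2n-s}=\{0\}$, $P\le C_{n+i}$ and $P\not\le C_n$ (the last is automatic since $C_n\le C_{2n-s}$); each of these has $v_S(C)=1$ and $\chi^i_P(C)=-1$. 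Counting them as the paper does --- $q^{s(2n-s)}$ choices of a $(2n-s)$-subspace $U$ skew to $S$ (Lemma \ref{numberofsubspaces}), then $\gauss{s-1}{s-n+i-1}$ subspaces of dimension $n+i$ through $U$ and $P$, then $z_{2n-s}z_{s+i-n}z_{n-i}$ chamber extensions (Lemma \ref{ChambeerExtensions}) --- gives $B=q^{s(2n-s)}\gauss{s-1}{s-n+i-1}z_{2n-s}z_{s+i-n}z_{n-i}>0$. These are exactly the ``offsetting $-1$ terms'' whose absence you puzzle over in your last paragraph: they exist, but you discarded them at the start.

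Because of that, your fallback for the case $n-i<s$ --- that the count of chambers with $C_s=S$ and $P\not\le C_{n-i}$ is zero or self-cancelling --- cannot be repaired: that count equals $A=\gauss{s-1}{n-i}q^{n-i}z_{n-i}z_{s+i-n}z_{2n-s}>0$, and every such chamber contributes with the same positive sign $q^{s(2n-s)-n}\cdot q^i$ (no value $-1$ can occur when $C_s=S$, since $P\le C_s\le C_n$), so no internal cancellation is possible; moreover an ``Abel-type summation over the dimensions of $C_{n-i}\cap P$'' has nothing to sum over, as $P$ is one-dimensional, so $C_{n-i}\cap P$ is either $P$ or $\{0\}$. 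The actual content of the lemma is a cross-cancellation between the two pieces of the support of $v_S$: for $s+i>n$ one checks $q^{s(2n-s)-n+i}A=B$ using the symmetry $\gauss{s-1}{n-i}=\gauss{s-1}{s-n+i-1}$, which is precisely what the weight $q^{s(2n-s)-n}$ in Notation \ref{SubspaceAntidesign} is designed to achieve. (Your treatment of the easy case $s+i\le n$, where every term vanishes, and of the two mixed cases that are empty, does agree with the paper; it is the heart of the lemma that is missing.)
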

\begin{proof}
\begin{enumerate}[1.]
\item We count the number of chambers $C$ with $C_s=S$, $P\le C_n$ and $P\not\le C_{n-i}$.

Since $P\le S$, this number is zero when $s\le n-i$. Now consider the situation when $s\ge n-i+1$.

Then $S$ has $\gauss{s-1}{n-i}\cdot q^{n-i}$ subspaces of dimension $n-i$ that do not contain $P$. For each such subspace $X$, the flag $\{X,S\}$ can be extended in $z_{n-i}z_{s+i-n}z_{2n-s}$ ways to a chamber fulfilling the requirements. Hence the number of chambers in question is
\begin{align*}
A:=\gauss{s-1}{n-i}\cdot q^{n-i}\cdot z_{n-i}\cdot z_{s+i-n}\cdot z_{2n-s}
\end{align*}

\item We count the number of chambers $C$ with $C_s=S$, $P\le C_{n+i}$ and $P\not\le C_n$.

Since $P\le S$ and each chamber $C$ satisfies $C_s\le C_n$, this number is zero.

\item We count the number of chambers $C$ such that $C_{2n-s}\cap S=\{0\}$, $P\le C_n$ and $P\not\le C_{n-i}$.

Since $P\le S$ and  each chamber $C$ satisfies $C_n\le C_{2n-s}$, this number is zero.

\item We count the number of chambers $C$ such that $C_{2n-s}\cap S=\{0\}$ and $P\le C_{n+i}$ and $P\not\le C_n$.

If $n+i\le 2n-s$, then each chamber $C$ satisfies $C_{n+i}\le C_s$ and hence no chamber has the desired properties, so its number is zero.

Now consider the situation when $2n-s\le n+i-1$, that is $s\ge n-i+1$. Then there exist $q^{s(2n-s)}$ subspaces $U$ of dimension $2n-s$ with $U\cap S=\{0\}$. For each such subspace $U$ there are $\gauss{2n-(2n-s+1)}{n+i-(2n-s+1)}=\gauss{s-1}{s-n+i-1}$ subspaces $V$ of dimension $n+i$ containing $U$ and $P$. Finally, each such flag $\{U,V\}$ can be extended to $z_{2n-s}z_{s+i-n}z_{n-i}$ distinct chambers, which all fulfill the requirements. Hence the number in question is
\begin{align*}
B:=q^{s(2n-s)}\cdot \gauss{s-1}{s-n+i-1}\cdot z_{2n-s}\cdot z_{s+i-n}\cdot z_{n-i}.
\end{align*}
\end{enumerate}
We have $v_S\chi^i_P=q^{s(2n-s)-n}A-B$. Since the Gaussian coefficients occurring in $A$ and $B$ are equal, it follows that $v_S\chi^i_P=0$.
\end{proof}

\begin{lemma} \label{L: subspace antidesign 2}
For each point $P$ and each subspace $S$ with $1\le s:=\dim(S)\le n$ and $P\not\le S$, we have $v_S\chi^i_P=0$ for all $i\in\{1,\dots,n\}$. For each point $P$ and each subspace $S$ of dimension at most $n$ with $P\not\le S$, we have $v_S\chi^i_P=0$ for all $i\in\{1,\dots,n\}$.
\end{lemma}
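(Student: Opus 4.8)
The plan is to proceed exactly as in the companion Lemma \ref{L: subspace antidesign 1}, expanding
\[
v_S^\top\chi^i_P=\sum_{C\in\CC}v_S(C)\,\chi^i_P(C)
\]
and observing that a chamber $C$ contributes only when both factors are nonzero. Since, by Notation \ref{SubspaceAntidesign}, $v_S(C)$ takes the value $q^{s(2n-s)-n}$ on the family $\{C:C_s=S\}$ and the value $1$ on the disjoint family $\{C:S\cap C_{2n-s}=\{0\}\}$, while $\chi^i_P(C)$ takes the value $q^i$ when $P\le C_n,\ P\not\le C_{n-i}$ and the value $-1$ when $P\le C_{n+i},\ P\not\le C_n$, the sum splits into four families of chambers. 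Writing $N_{\mathrm I},N_{\mathrm{II}},N_{\mathrm{III}},N_{\mathrm{IV}}$ for their cardinalities, one has
\[
v_S^\top\chi^i_P=q^{s(2n-s)-n}\bigl(q^iN_{\mathrm I}-N_{\mathrm{II}}\bigr)+\bigl(q^iN_{\mathrm{III}}-N_{\mathrm{IV}}\bigr),
\]
and the goal is to show that this vanishes.

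The decisive difference from Lemma \ref{L: subspace antidesign 1} is that, because now $P\not\le S$, none of the four families is forced to be empty by a trivial containment, so all four counts genuinely have to be evaluated. I would compute each $N_\bullet$ by the two-step recipe used throughout the section: first use Lemma \ref{numberofsubspaces} to count the subspaces of the prescribed dimension lying in the required position relative to $S$ and $P$ (for instance the $(n-i)$-spaces that do or do not contain $P$, or the $(2n-s)$-spaces skew to $S$ that do or do not contain $P$), and then use Lemma \ref{ChambeerExtensions} to multiply by the number of chambers extending each resulting flag. For the families with $C_s=S$ it is convenient to pass to the quotient $\ff_q^{2n}/S$, in which $P$ has a nonzero image $\bar P$ and the conditions $P\le C_j$ with $j\ge s$ become $\bar P\le C_j/S$; for the families with $S\cap C_{2n-s}=\{0\}$ one instead works inside the complement $C_{2n-s}$ of $S$.

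The place where care is needed — and where I expect the main bookkeeping obstacle to lie — is that the position of $P$ relative to $C_{n-i},C_n,C_{n+i}$ interacts with $\dim(S)$: whether $C_{n-i}$ lies inside or contains $S$ is governed by the sign of $n-i-s$, and whether $C_{n+i}\le C_{2n-s}$ is governed by the sign of $(n+i)-(2n-s)=i-(n-s)$. I therefore anticipate splitting the analysis according to how $i$ compares with $n-s$ and evaluating the four counts in each range. Once closed forms for $N_{\mathrm I},\dots,N_{\mathrm{IV}}$ are obtained as products of Gaussian coefficients and chamber-numbers $z_\bullet$, the final step is purely formal: substitute into the displayed expression and simplify, using the symmetry $\gauss{a}{b}=\gauss{a}{a-b}$ and the $q$-Pascal identity (exactly the identities that forced the cancellation $q^{s(2n-s)-n}A=B$ in Lemma \ref{L: subspace antidesign 1}), to conclude $v_S^\top\chi^i_P=0$ for every $i$. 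This is the assertion of the lemma, and combined with Lemma \ref{L: subspace antidesign 1} and Theorem \ref{PreprintJanSamKlaus} it shows that $v_S$ is an antidesign.
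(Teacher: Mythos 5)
Your plan coincides with the paper's proof: the paper performs exactly the same four-way split (its $A_{11},A_{12},A_{21},A_{22}$ are your $N_{\mathrm I},N_{\mathrm{II}},N_{\mathrm{III}},N_{\mathrm{IV}}$), evaluates each count by first locating the relevant subspaces relative to $S$ and $P$ and then applying Lemma \ref{ChambeerExtensions}, distinguishes the ranges $s+i\le n$ and $s+i\ge n$ as you anticipate, and substitutes into exactly your displayed identity (equation \eqref{eqn_antidesign_finalequation} in the paper).

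One substantive caveat about your final step, where you expect ``exactly the identities that forced the cancellation $q^{s(2n-s)-n}A=B$ in Lemma \ref{L: subspace antidesign 1}'' to finish the job. That pairwise mechanism only works in the range $s+i\le n$: there one indeed has $q^iN_{\mathrm I}=N_{\mathrm{II}}$ and $q^iN_{\mathrm{III}}=N_{\mathrm{IV}}$, so each of your two brackets vanishes separately. For $s+i>n$, however, the counts give
\[
\frac{q^iN_{\mathrm I}}{N_{\mathrm{II}}}=\frac{q^i\,(q^{n-s}-1)}{q^{n-s}(q^i-1)}\neq 1,
\]
so neither bracket is zero on its own; only the weighted sum $q^{s(2n-s)-n}\bigl(q^iN_{\mathrm I}-N_{\mathrm{II}}\bigr)+\bigl(q^iN_{\mathrm{III}}-N_{\mathrm{IV}}\bigr)$ vanishes, and verifying this forces you to expand all four counts in the $[c]$ notation and combine terms \emph{across} the two families (``$C_s=S$'' against ``$C_{2n-s}\cap S=\{0\}$''); this is the longest computation in the paper's proof. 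Relatedly, in that range the count $N_{\mathrm{IV}}$ itself is not a single product: since $2n-s\le n+i$, one must split according to whether the $(2n-s)$-space skew to $S$ contains $P$ or not, which your plan hints at but does not carry out. So your outline is the right one and does succeed, but a reader following it while expecting Lemma-\ref{L: subspace antidesign 1}-style pairwise cancellation would stall precisely at $s+i>n$.
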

\begin{proof}
\begin{enumerate}[1.]
\item We count the number of chambers $C$ with $C_s=S$, $P\le C_n$ and $P\not\le C_{n-i}$.

There are $\gauss{2n-s-1}{n-s-1}$ subspaces $U_n$ of dimension $n$ satisfying $P,S\le U_n$. Notice that this is zero if $s=n$.

If $n-i\le s$, then the flag $\{S,U_n\}$ extends to exactly $z_sz_{n-s}z_n$ chambers and all these have the desired properties.

If $s\le n-i$, then for each subspace $U_n$ as above, there we see in the quotient space on $S$, that there exist $\gauss{n-s-1}{n-i-s}q^{n-i-s}$ subspaces $U_{n-i}$ satisfying $S\le U_{n-i}\le U_n$ and $P\notin U_{n-i}$, and each flag $\{S,U_{n-i},U_n\}$ extends to $z_sz_{n-i-s}z_iz_n$ chambers.

Using the symmetry of the Gaussian coefficients we find
\begin{align*}
A_{11}=\begin{cases}
\displaystyle \gauss{2n-s-1}{n-s-1}\gauss{n-s-1}{n-i-s}q^{n-i-s}z_sz_{n-i-s}z_iz_n & \mbox{if $s+i\le n$,}
\\\\
\displaystyle \gauss{2n-s-1}{n-s-1}z_sz_{n-s}z_n & \mbox{if $s+i\ge n$}
\end{cases}
\end{align*}
and of course, both values coincide when $s+i=n$.

\item We count the number of chambers $C$ with $C_s=S$, $P\le C_{n+i}$ and $P\not\le C_n$.

There exist $\gauss{2n-s-1}{n-s}q^{n-s}$ subspaces $U_n$ with $S\le U_n$ and $P\not\le U_n$, and for each such there exist $\gauss{2n-n-1}{i-1}$ subspaces $U_{n+i}$ of dimension $n+i$ satisfying $U_n,P\le U_{n+i}$. It follows that
\begin{align*}
A_{12}=\gauss{2n-s-1}{n-s}q^{n-s}\cdot \gauss{n-1}{i-1}\cdot z_s\cdot z_{n-s}\cdot z_{i}\cdot z_{n-i}.
\end{align*}

\item We count the number $A_{21}$ of chambers $C$ with $C_{2n-s}\cap S=\{0\}$, $P\le C_n$ and $P\not\le C_{n-i}$.

There exist $q^{s(2n-1-s)}$ subspaces $U_{2n-s}$ of dimension $2n-s$ that are skew to $S$ and contain $P$. Each contains $\gauss{2n-s-1}{n-1}$ subspaces $U_n$ of dimension $n$ satisfying $P\le U_n$, and each such subspace $U_n$ contains $\gauss{n-1}{n-i}q^{n-i}$ subspaces $U_{n-i}$ of dimension $n-i$ with $P\not\le U_{n-i}$. Hence
\begin{align*}
A_{21}=q^{s(2n-1-s)}\cdot \gauss{2n-s-1}{n-1}\cdot \gauss{n-1}{n-i}q^{n-i}\cdot z_{n-i}\cdot z_i\cdot z_{n-s}\cdot z_{s}.
\end{align*}

\item We count the number $A_{22}$ of chambers $C$ with $C_{2n-s}\cap S=\{0\}$ and $P\le C_{n+i}$ and $P\not\le C_n$.

\textbf{Case 1.} $s+i\le n$.

Then $n+i\le 2n-s$ and hence each chamber $C$ with $C_{2n-s}\cap S=\{0\}$ also satisfies $C_{n+i}\cap S=\{0\}$. In the quotient space on $P$, we see that there are
\begin{align*}
\gauss{2n-s-1}{n+i-1}q^{s(n+i-1)}
\end{align*}
subspaces $U_{n+i}$ satisfying $P\le U_{n+i}$ and $U_{n+i}\cap S=\{0\}$. Each such subspace $U_{n+i}$ contains
\begin{align*}
\gauss{n+i-1}{n}q^{n}
\end{align*}
subspaces $U_n$ with $P\not\le U_n$, and each subspace $U_{n+i}$ is contained in
\begin{align*}
q^{s(2n-s-(n+i))}
\end{align*}
subspaces $U_{2n-s}$ with $U_{2n-s}\cap S=\{0\}$. Hence for $s\le n-i$ we have
\begin{align*}
A_{22}=\gauss{2n-s-1}{n+i-1}
\gauss{n+i-1}{n}    \cdot q^{s(2n-s-1)+n} \cdot
z_n\cdot z_{i}\cdot z_{2n-s-n-i}\cdot z_s.
\end{align*}

\textbf{Case 2.} $s+i\ge n$.

Then $n+i\ge 2n-s$. There exist $q^{s(2n-s)}$ subspaces $U_{2n-s}$ that intersect $S$ trivially, and of these $q^{s(2n-s-1)}$ contain $P$.

If $U_{2n-s}$ is a subspace skew to $S$ and if $P\in U_{2n-s}$, then $U_{2n-s}$ lies in $\gauss{2n-(2n-s)}{n+i-(2n-s)}$ subspaces $U_{n+i}$ of dimension $n+i$ and all these contain $P$, and $U_{2n-s}$ contains $\gauss{2n-s-1}{n}q^n$ subspaces of dimension $n$ that all do not contain $P$.

If however $U_{2n-s}$ is a subspace skew to $S$ and if $P\notin U_{2n-s}$, then $U_{2n-s}$ lies in $\gauss{2n-(2n-s+1)}{n+i-(2n-s+1)}$ subspaces $U_{n+i}$ of dimension $n+i$ that contain $P$, and $U_{2n-s}$ contains $\gauss{2n-s}{n}$ subspaces of dimension $n$ and none of these contains $P$.

Hence, the number of flags $(U_n,U_{2n-s},U_{n+i})$ that can be extended to a chamber satisfying the requirements in question is \begin{align*}
&(q^{s(2n-s)}-q^{s(2n-s-1)})
\gauss{2n-(2n-s+1)}{n+i-(2n-s+1)}\gauss{2n-s}{n}
\\
&+q^{s(2n-s-1)}\gauss{2n-(2n-s)}{n+i-(2n-s)}\gauss{2n-s-1}{n}q^n.
\end{align*}
Since each such flag can be extended in exactly $z_nz_{n-s}z_{n+i-(2n-s)}z_{n-i}$ ways to a chamber, which all fulfill the requirements, we find after some simplifications of the above term that the total number of chambers in question is equal to
\begin{align*}
A_{22}=
q^{s(2n-s-1)} &\left((q^s-1)
\gauss{s-1}{s+i-n-1}\gauss{2n-s}{n}+\gauss{s}{s+i-n}\gauss{2n-s-1}{n}q^n\right)z_nz_{n-s}z_{i+s-n}z_{n-i}.
\end{align*}
\end{enumerate}
We have
\begin{align}\label{eqn_antidesign_finalequation}
v_S\chi^i_P=q^{s(2n-s)-n+i}A_{11}-q^{s(2n-s)-n}A_{12}+q^iA_{21}-A_{22}.
\end{align}
To see that this is zero, we first consider the case that $i+s\le n$. Using Lemma \ref{firstremarks} we find
\begin{align*}
\frac{A_{11}q^i}{A_{12}}&=\frac
{\gauss{2n-s-1}{n-s-1}}
{\gauss{2n-s-1}{n-s}}
\cdot
\frac
{\gauss{n-s-1}{i-1}}
{\gauss{n-1}{i-1}}
\cdot
\frac
{z_{n-i-s}\cdot z_n}
{z_{n-s}\cdot z_{n-i}}
\\
&=\frac
{[2n-s-1]\cdot [n-s]\cdot [n-1]}
{[n-s-1]\cdot [n]\cdot [2n-s-1]}
\cdot
\frac
{[n-s-1]\cdot [i-1]\cdot [n-i]}
{[i-1]\cdot [n-s-i]\cdot [n-1]}
\cdot
\frac
{[n-i-s]\cdot [n]}
{[n-s]\cdot [n-i]}
=1
\end{align*}
and
\begin{align*}
\frac{A_{21}q^i}{A_{22}}&=\frac
{\gauss{2n-s-1}{n-1}\cdot \gauss{n-1}{n-i}\cdot z_{n-i}\cdot z_{n-s}}
{\gauss{2n-s-1}{n+i-1}
\gauss{n+i-1}{n}\cdot z_n\cdot z_{n-s-i}}
\\
&=
\frac
{[2n-s-1]\cdot [n+i-1]\cdot [n-s-i]\cdot [n-1]\cdot [n]\cdot [i-1]\cdot [n-i]\cdot [n-s]}
{[n-1]\cdot [n-s]\cdot [2n-s-1]\cdot [n-i]\cdot [i-1]\cdot [n+i-1]\cdot [n]\cdot [n-s-i]}=1.
\end{align*}
It follows that the right-hand side of \eqref{eqn_antidesign_finalequation} is zero. Now consider the case when $i+s\ge n$.
Then

\begin{align*}
& \hspace{-2ex} \frac{(q-1)^{n+s}}{z_{n-s}q^{s(2n-1-s)}}\cdot\left(q^{s(2n-s)-n+i}A_{11}+A_{21}q^i\right)
\\
&=\gauss{2n-s-1}{n-s-1}\cdot q^{i+s-n}z_nz_s
 +q^{n}\cdot \gauss{2n-s-1}{n-1}\cdot \gauss{n-1}{n-i}\cdot z_{n-i}\cdot z_i\cdot  z_{s}
\\
&=\frac{[2n-s-1]\cdot q^{i+s-n}[n][s]}
{[n-s-1]\cdot [n]}
 +\frac{q^{n}\cdot [2n-s-1][n-1]\cdot[n-i][i][s]}
 {[n-1][n-s][n-i][i-1]}
\\
&=\frac{[2n-s-1]\cdot q^{i+s-n}[s]}
{[n-s-1]}
 +\frac{q^{n}\cdot [2n-s-1][i][s]}
 {[n-s][i-1]}
\\
&=\frac{[2n-s-1][s]}
{[n-s]}\left(
q^{i+s-n}(q^{n-s}-1)
 +q^{n}(q^i-1)
\right)
\\
&=\frac{[2n-s-1][s]}
{[n-s]}\left(
q^i-q^{i+s-n}+q^{n+i}-q^n
\right)
\end{align*}
and
\begin{align*}
& \hspace{-2ex} \frac{(q-1)^{n+s}}{z_{n-s}q^{s(2n-1-s)}}q^{s(2n-s)-n}A_{12}+A_{22}
\\
&=\gauss{2n-s-1}{n-s}\cdot \gauss{n-1}{i-1}\cdot [s]\cdot [i]\cdot [n-i]
\\
&+\left((q^s-1)
\gauss{s-1}{s+i-n-1}\gauss{2n-s}{n}+\gauss{s}{s+i-n}\gauss{2n-s-1}{n}q^n\right)[n]\cdot [i+s-n][n-i]
\\
&=\frac{[2n-s-1]\cdot [s]\cdot [i]}
{[n-s][i-1]}
+(q^s-1)\frac{[s-1][2n-s]\cdot [i+s-n]}
{[s+i-n-1][n-s]}
+
\frac{[s][2n-s-1]q^n}
{[n-s-1]}
\\
&=\frac{[2n-s-1][s]}
{[n-s]}\left(
\frac{[i]}
{[i-1]}
+\frac{(q^{2n-s}-1)\cdot [i+s-n]}
{[s+i-n-1]}
+\frac{q^n(q^{n-s}-1)}
{1}
\right)
\\
&=\frac{[2n-s-1][s]}
{[n-s]}\left(
(q^i-1)
+(q^{2n-s}-1)(q^{s+i-n}-1)
+q^n(q^{n-s}-1)
\right)
\\
&=\frac{[2n-s-1][s]}
{[n-s]}\left(q^i+q^{n+i}-q^{s+i-n}-q^n\right)
\end{align*}
and hence also in this case the right-hand side of \eqref{eqn_antidesign_finalequation} is zero.
\end{proof}

\begin{theorem} \label{T: subspace antidesign}
For every nontrivial subspace $S$ of dimension at most $n$ of $\ff_q^{2n}$, the map $v_S$ defined in \ref{SubspaceAntidesign} is an antidesign. If $s=\dim(S)$, then $\allones^\top v_S=q^{s(2n-s)-n}(q^{n}+1)z_sz_{2n-s}$
\end{theorem}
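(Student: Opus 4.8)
The plan is to deduce the antidesign property directly from the two preceding lemmas together with Theorem~\ref{PreprintJanSamKlaus}, and then to evaluate $\allones^\top v_S$ by a short counting argument.

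For the first assertion, recall that by Theorem~\ref{PreprintJanSamKlaus} the eigenspace for the smallest eigenvalue of $\Gamma_{2n}(q)$ is spanned by the vectors $\chi^i_P$ with $i\in\{1,\dots,n\}$ and $P$ a $1$-subspace. Hence, to show that $v_S$ is an antidesign, it suffices to verify $v_S^\top\chi^i_P=0$ for every such $i$ and $P$. Every $1$-subspace $P$ satisfies either $P\le S$ or $P\not\le S$, and these two cases are precisely those treated in Lemmas~\ref{L: subspace antidesign 1} and~\ref{L: subspace antidesign 2} respectively. Since both lemmas yield $v_S^\top\chi^i_P=0$ for all admissible $i$, the antidesign property follows immediately.

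For the value of $\allones^\top v_S=\sum_{C}v_S(C)$, observe that $v_S$ is supported on two classes of chambers: those with $C_s=S$, each contributing $q^{s(2n-s)-n}$, and those with $S\cap C_{2n-s}=\{0\}$, each contributing $1$. I would first check that these classes are disjoint: if $C_s=S$ then, since $s\le n\le 2n-s$, we have $S=C_s\le C_{2n-s}$, whence $S\cap C_{2n-s}=S\neq\{0\}$. Thus no chamber lies in both classes and the sum splits cleanly. To count the first class, note that a chamber with $C_s=S$ is exactly a chamber containing the flag $\{S\}$ of type $\{s\}$; by Lemma~\ref{ChambeerExtensions} there are $z_sz_{2n-s}$ of these. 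For the second class, Lemma~\ref{numberofsubspaces} (with $a=s$, $b=2n-s$, $d=2n$) gives $q^{s(2n-s)}$ subspaces of dimension $2n-s$ that are skew to $S$, and for each such subspace $U$ the chambers with $C_{2n-s}=U$ are the extensions of the flag $\{U\}$ of type $\{2n-s\}$, of which there are $z_{2n-s}z_s$ again by Lemma~\ref{ChambeerExtensions}; hence this class has $q^{s(2n-s)}z_sz_{2n-s}$ chambers.

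Combining the two counts with their respective weights then yields
\begin{align*}
\allones^\top v_S=q^{s(2n-s)-n}\,z_sz_{2n-s}+q^{s(2n-s)}\,z_sz_{2n-s}
=q^{s(2n-s)-n}(1+q^{n})\,z_sz_{2n-s},
\end{align*}
which is the claimed formula. The substantive work has already been carried out in Lemmas~\ref{L: subspace antidesign 1} and~\ref{L: subspace antidesign 2}, so the theorem itself is a short combination of these with the spanning set of Theorem~\ref{PreprintJanSamKlaus}. The only point requiring genuine care here is the disjointness of the two supports (which rests on the hypothesis $s\le n$), together with the correct application of Lemma~\ref{ChambeerExtensions} to enumerate the chambers through a fixed $s$- or $(2n-s)$-subspace.
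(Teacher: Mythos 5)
Your proof is correct and follows essentially the same route as the paper: the antidesign property is obtained by combining Theorem~\ref{PreprintJanSamKlaus} with Lemmas~\ref{L: subspace antidesign 1} and~\ref{L: subspace antidesign 2} (which together cover the exhaustive cases $P\le S$ and $P\not\le S$), and the value of $\allones^\top v_S$ is computed by counting the chambers with $C_s=S$ and those with $S\cap C_{2n-s}=\{0\}$ via Lemmas~\ref{ChambeerExtensions} and~\ref{numberofsubspaces}. Your explicit check that the two support classes are disjoint is a detail the paper leaves implicit, but otherwise the arguments coincide.
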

\begin{proof}
Since the eigenspace for the smallest eigenvalue of $\Gamma_{2n}(q)$ is spanned by the vectors $\chi^i_P$, Lemma \ref{L: subspace antidesign 1} and Lemma \ref{L: subspace antidesign 2} show that $v_S$ is an antidesign.

The formula for $\allones^\top v_S$ follows from the definition of $v_S$ and the observation that the number of chambers $C$ satisfying $C_s=S$ is $z_sz_{2n-s}$ and that the number of chambers satisfying $S\cap C_{2n-s}=\{0\}$ is $q^{s(2n-s)}z_sz_{2n-s}$.
\end{proof}

\begin{cor} \label{L: preparation for EKR proof}
If $S$ is a subspace of dimension $s\le n$ and if $\EKR$ is a maximum EKR-set of chambers of $\ff_q^{2n}$, then $v_S^\top \charvec_{\EKR}=q^{s(2n-s)-n}z_sz_{2n-s}$.
\end{cor}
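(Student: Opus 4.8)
The plan is to obtain this as an immediate consequence of the antidesign property established in Theorem \ref{T: subspace antidesign} together with the general intersection identity recorded in Proposition \ref{Intersection_F_with_antidesing}. The only conceptual point to verify is that the hypotheses of that proposition are met: a maximum EKR-set of chambers of $\ff_q^{2n}$ is, by definition, a set of chambers no two of which are opposite, hence a coclique of $\Gamma_{2n}(q)$, and being of maximum size it is precisely a largest independent set of $\Gamma_{2n}(q)$ (its cardinality equals $\alpha(\Gamma_{2n}(q))=z_{2n}/(q^n+1)$ by Theorem \ref{thm:maxsize} and Lemma \ref{basicnumbers}). Thus $\charvec_\EKR$ lies in $\langle\allones\rangle+E$, which is exactly the setting in which Proposition \ref{Intersection_F_with_antidesing} applies.

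First I would invoke Theorem \ref{T: subspace antidesign} to conclude that $v_S$ is an antidesign and to record the value
\[
\allones^\top v_S=q^{s(2n-s)-n}(q^n+1)z_sz_{2n-s}.
\]
Second, since $v_S$ is an antidesign and $\EKR$ is a largest independent set, Proposition \ref{Intersection_F_with_antidesing} yields
\[
v_S^\top\charvec_\EKR=\frac{\allones^\top v_S}{q^n+1}.
\]
Third, I would substitute the value of $\allones^\top v_S$ from the first step; the factor $(q^n+1)$ cancels against the denominator, leaving
\[
v_S^\top\charvec_\EKR=q^{s(2n-s)-n}z_sz_{2n-s},
\]
which is the asserted equality.

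There is no genuine obstacle here, as all the substance has already been carried out in Lemma \ref{L: subspace antidesign 1}, Lemma \ref{L: subspace antidesign 2}, and Theorem \ref{T: subspace antidesign}, where the delicate verification $v_S^\top\chi^i_P=0$ and the counting of the weighted chambers were done. The corollary merely specialises the abstract identity $\charvec_\EKR^\top v=(\allones^\top v)/(q^n+1)$ to $v=v_S$ and performs the cancellation of $q^n+1$; accordingly I would keep the argument to the three short steps above rather than expanding any calculation.
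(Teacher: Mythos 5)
Your proof is correct and takes essentially the same approach as the paper: both rest on Theorem \ref{T: subspace antidesign} combined with the fact that $\charvec_\EKR$ lies in $\langle\allones\rangle+E$ for a maximum independent set. The only difference is presentational: the paper inlines the cancellation of $q^n+1$ by writing $\allones^\top\charvec_\EKR=\gauss{2n-1}{n}z_n^2$ and simplifying Gaussian coefficients over several lines, whereas you invoke Proposition \ref{Intersection_F_with_antidesing} directly, which already encapsulates that computation.
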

\begin{proof}
Since $\allones^\top\charvec_\EKR=\gauss{2n-1}{n}z_n^2$ and since the total number of chambers is $z_{2n}$, Theorem \ref{T: subspace antidesign} shows that
\begin{align*}
\charvec_\EKR^\top v_S&=\frac{\gauss{2n-1}{n}z_n^2\cdot q^{s(2n-s)-n}(q^{n}+1)}{z_{2n}}\cdot z_sz_{2n-s}
\\
&=\frac{\gauss{2n-1}{n}[n]^2\cdot q^{s(2n-s)-n}(q^{n}+1)}{[2n]}\cdot z_sz_{2n-s}
\\
&=\frac{[2n-1][n]\cdot q^{s(2n-s)-n}(q^{n}+1)}{[2n][n-1]}\cdot z_sz_{2n-s}
\\
&=\frac{[n]\cdot q^{s(2n-s)-n}(q^{n}+1)}{(q^{2n}-1)[n-1]}\cdot z_sz_{2n-s}
\\
&=q^{s(2n-s)-n}z_sz_{2n-s}.
\end{align*}
\end{proof}

 \section{Classifying maximum EKR-sets of chambers} \label{Section: Classification of ekr}

In this section we acquire all results needed to prove Theorem \ref{thm:Bound}.

\begin{notation} Let $\EKR$ be an EKR-set of chambers of $\ff_q^{2n}$.
\begin{enumerate}[1.]
\item For every subspace $S$ of $\ff_q^{2n}$ the weight of $S$ (with respect to $\EKR$) is the number of chambers of $\EKR$ that contain $S$. We call $S$ \emph{heavy} (with respect to $\EKR$) if all chambers that contain $S$ belong to $\EKR$; otherwise, we call $S$ \emph{light}.
\item A chamber is called \emph{light} if all its subspaces are light, and otherwise it is called \emph{heavy}.
\end{enumerate}
\end{notation}

\begin{lemma} \label{C: key for the proof}
Let $0<s<2n$, let $S$ be a subspace of dimension $s$, and $\EKR$ a maximum EKR-set of chambers of $\ff_q^{2n}$. Consider the chambers $C \in \EKR$, let $x$ be the number of these with $C_s=S$, let $y$ the number of these with $C_{2n-s}\cap S=\{0\}$, and let $z$ the number of these with $C_s\neq S$ and $C_{2n-s}\cap S\neq \{0\}$.
\begin{enumerate}[(a)]
\item We have $y=q^{s(2n-s)-n}z_sz_{2n-s}-xq^{s(2n-s)-n}$.
\item If $S$ is heavy, then $x=z_sz_{2n-s}$ and $z=\frac{z_{2n}}{1+q^{n}}-z_sz_{2n-2}$.
\item If $S$ is light, then $x\le z_sz_{2n-s}-q^{n^2-n+(n-s)^2}$ and $z\leq \frac{z_{2n}}{1+q^n}-q^{2n^2-2n}$.
\end{enumerate}
\end{lemma}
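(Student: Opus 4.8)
The plan is to exploit the antidesign identity from Corollary \ref{L: preparation for EKR proof}, which states that $v_S^\top \charvec_\EKR = q^{s(2n-s)-n}z_sz_{2n-s}$. By the definition of $v_S$ in Notation \ref{SubspaceAntidesign}, the left-hand side decomposes according to the value of $v_S$ on chambers of $\EKR$: chambers with $C_s=S$ contribute $q^{s(2n-s)-n}$ each, chambers with $C_{2n-s}\cap S=\{0\}$ contribute $1$ each, and all other chambers contribute $0$. Thus $v_S^\top \charvec_\EKR = x\, q^{s(2n-s)-n} + y$, and setting this equal to $q^{s(2n-s)-n}z_sz_{2n-s}$ and solving for $y$ gives part (a) immediately. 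This is essentially bookkeeping.

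For part (b), if $S$ is heavy then by definition every chamber containing $S$ lies in $\EKR$. The number of chambers with $C_s=S$ is exactly $z_sz_{2n-s}$ (the number of ways to extend the flag $\{S\}$ to a chamber, as in Lemma \ref{ChambeerExtensions}), and since all of these belong to $\EKR$ we get $x=z_sz_{2n-s}$. For $z$, I would note that $x+y+z$ counts all chambers of $\EKR$ except those with $C_s\neq S$ but $C_{2n-s}\cap S\neq\{0\}$ — wait, rather $x$, $y$, $z$ partition a relevant subset. Actually $z$ is defined as the number of chambers in $\EKR$ with $C_s\neq S$ and $C_{2n-s}\cap S\neq\{0\}$, so $x+y+z$ equals $|\EKR| = \frac{z_{2n}}{1+q^n}$ minus the count of chambers with $C_s\neq S$ and $C_{2n-s}\cap S=\{0\}$; but that latter count is absorbed into $y$ together with the $C_s=S$ chambers (note $C_s=S$ forces $C_{2n-s}\cap S=S\neq\{0\}$, so the sets defining $x$ and $y$ are disjoint). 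Hence $x+y+z = |\EKR|$, giving $z = \frac{z_{2n}}{1+q^n} - x - y$. Substituting $x=z_sz_{2n-s}$ and the value of $y$ from (a), the powers of $q$ should telescope to leave $z = \frac{z_{2n}}{1+q^n} - z_sz_{2n-2}$; verifying this algebraic simplification is routine but must be checked carefully.

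For part (c), when $S$ is light there exists at least one chamber containing $S$ that is \emph{not} in $\EKR$. The strategy is to bound $x$ from above by showing that a single missing chamber forces many others to be missing, via the oppositeness structure: if a chamber $D$ with $D_s=S$ is opposite to some chamber $D'\in\EKR$, then $D\notin\EKR$. More precisely, I would fix one chamber $D^*$ containing $S$ with $D^*\notin\EKR$, and count chambers with $C_s=S$ that are \emph{forced} to be excluded. The count $q^{n^2-n+(n-s)^2}$ strongly suggests using Lemma \ref{basicnumbers}(e): the number of chambers through a fixed subspace opposite to a given chamber, with the exponent $\binom{s}{2}+\binom{2n-s}{2}$-type terms reorganized. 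The deficit in $x$ then propagates to a deficit in $z$ through the partition identity and part (a), yielding the bound on $z$.

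The main obstacle will be part (c): translating ``$S$ is light'' into the explicit numerical deficit $q^{n^2-n+(n-s)^2}$ for $x$ and $q^{2n^2-2n}$ for $z$. The delicate point is to identify the right family of chambers that must be excluded once one chamber through $S$ is known to be absent, and to count them via the opposite-chamber formulas in Lemma \ref{basicnumbers}, ensuring no double-counting and that the exponents match exactly. The bounds for $x$ and $z$ are presumably linked: a light $S$ means some chamber $D$ through $S$ is opposite to an element of $\EKR$, and every chamber of $\EKR$ opposite to $D$ contributes to the exclusion count; the quantity $q^{2n^2-2n} = q^{\binom{2n}{2} - \binom{2n}{2}+\dots}$ matching the opposite-count from Lemma \ref{basicnumbers}(d) (which gives $q^{\binom{2n}{2}}$ total opposite chambers, scaled appropriately) is the key numeric coincidence to pin down.
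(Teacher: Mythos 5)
Your parts (a) and (b) coincide with the paper's argument: write $\charvec_\EKR^\top v_S = xq^{s(2n-s)-n}+y$, invoke Corollary \ref{L: preparation for EKR proof}, and use the partition $x+y+z=|\EKR|$ (in (b), substituting $x=z_sz_{2n-s}$ into (a) gives $y=0$ outright, so no algebraic ``telescoping'' is needed; the $z_{2n-2}$ in the statement is a typo for $z_{2n-s}$). However, there is a concrete gap: the lemma is stated for all $0<s<2n$, while $v_S$ (Notation \ref{SubspaceAntidesign}) and Corollary \ref{L: preparation for EKR proof} are only defined and proved for $s\le n$. For $n<s<2n$ your central identity is literally undefined, so the proposal as written does not prove the statement. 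The paper's proof opens by reducing to $s\le n$ by duality: the map $C\mapsto\{C_i^\perp\}$ preserves chambers and oppositeness, hence sends maximum EKR-sets to maximum EKR-sets, it interchanges the conditions defining $x$, $y$, $z$ for $S$ with those for $S^\perp$ of dimension $2n-s$, and the stated bounds are symmetric under $s\mapsto 2n-s$. Some such reduction must be added.

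For part (c) your ingredients are the right ones, and your route differs from the paper's only in how the witness chamber is produced: you use maximality (a chamber $D^*$ through $S$ with $D^*\notin\EKR$ must be opposite to some $D'\in\EKR$, and then automatically $D'_{2n-s}\cap S=D'_{2n-s}\cap D^*_s=\{0\}$), whereas the paper notes that $x<z_sz_{2n-s}$ forces $y>0$ by part (a), which directly supplies a member $C\in\EKR$ with $C_{2n-s}\cap S=\{0\}$. Either way, the excluded chambers are those through $S$ that are opposite to that \emph{one fixed member} of $\EKR$ --- not, as your last sentence suggests, something accumulated over ``every chamber of $\EKR$ opposite to $D$'' --- and Lemma \ref{basicnumbers}(e) counts exactly $q^{{s\choose 2}+{2n-s\choose 2}}=q^{n^2-n+(n-s)^2}$ of them, none of which can lie in $\EKR$; this gives the bound on $x$. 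The propagation to $z$, which you only assert, should be made explicit: by (a), $y=q^{s(2n-s)-n}(z_sz_{2n-s}-x)\ge q^{s(2n-s)-n}\cdot q^{{s\choose 2}+{2n-s\choose 2}}=q^{{2n\choose 2}-n}=q^{2n^2-2n}$, hence $z=|\EKR|-x-y\le|\EKR|-q^{2n^2-2n}$. This computation replaces your vague appeal to Lemma \ref{basicnumbers}(d) ``scaled appropriately''; the exponent identity actually used is $s(2n-s)+{s\choose 2}+{2n-s\choose 2}={2n\choose 2}$.
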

\begin{proof}
It suffices to prove this for $s\le n$, since the remaining cases are dual to these ones.
Therefore, we assume $s\le n$. We have $x+y+z=|\EKR|=\frac{z_{2n}}{1+q^{n}}$. From the definition of $v_S$ (see Notation \ref{SubspaceAntidesign}) we see that $\charvec_\EKR^\top v_S=xq^{s(2n-s)-n}+y$ so Corollary \ref{L: preparation for EKR proof} proves the formula for $y$. It follows that $x\le z_sz_{2n-s}$. If equality holds, then $y=0$ and $z=|\EKR|-x$ and we are in situation (b).

Now consider the situation when $x<z_sz_{2n-s}$. Then $y$ is positive, so there exists a chamber $C\in \EKR$ such that $C_{2n-s}\cap S=\{ 0\}$. Lemma \ref{basicnumbers} shows that the number of chambers that contain $S$ and are opposite to $C$ is $q^{{s\choose 2}+{2n-s\choose 2}}=q^{2n^2-2ns-n+s^2}$. Since the total numbers of chambers containing $S$ is $z_sz_{2n-s}$, it follows that $x\le z_sz_{2n-s}-q^{2n^2-2ns-n+s^2}$. Using the formula for $y$ and the equation $x+y+z=\frac{z_{2n}}{1+q^{n}}$, it follows that we are in situation (c).
\end{proof}

First, we consider the case in which every chamber in $\EKR$ is heavy.

\begin{lemma} \label{L: heavy subspaces pairwise meet.}
Let $\EKR$ be a maximum EKR-set of chambers of $\ff_q^{2n}$ and let $1\leq s\leq 2n-1$.
\begin{itemize}
	\item[(a)] A subspace $S$ of dimension $s$ is heavy if and only if
for every chamber $C$ in $\EKR$ we have $C_{2n-s}\cap S\neq \{ 0\}$.
	\item[(b)] If $s\le n$, the heavy subspaces of dimension $s$ mutually meet non-trivially.
	\item[(c)] For $s\le n$ the number of heavy $s$-subspaces is at most $\Gauss{2n-1}{s-1}$ and the number  of heavy $(2n-s)$-subspaces is at most $\Gauss{2n-1}{s-1}$.
  \item[(d)] A heavy $n$-subspace meets the $n$-subspace of every chamber of $\EKR$.
\end{itemize}
\end{lemma}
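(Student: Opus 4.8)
The plan is to derive all four statements from the count in Lemma \ref{C: key for the proof} together with the classical vector-space Erdős–Ko–Rado theorem of Frankl and Wilson.

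\textbf{Parts (a) and (d).} Part (a) is essentially a restatement of Lemma \ref{C: key for the proof}(a). Writing $x$ for the number of chambers $C\in\EKR$ with $C_s=S$ and $y$ for the number with $C_{2n-s}\cap S=\{0\}$, that lemma gives $y=q^{s(2n-s)-n}(z_sz_{2n-s}-x)$. Since the factor $q^{s(2n-s)-n}$ is positive and since $z_sz_{2n-s}$ is the total number of chambers containing $S$ (Lemma \ref{ChambeerExtensions}), we have $y=0$ if and only if $x=z_sz_{2n-s}$, i.e. if and only if every chamber through $S$ lies in $\EKR$, which is exactly the condition that $S$ is heavy. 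As $y=0$ means precisely that no chamber of $\EKR$ meets $S$ trivially in dimension $2n-s$, this proves (a). Part (d) is then the special case $s=n$ of (a), for which $2n-s=n$.

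\textbf{Part (b).} Suppose, for a contradiction, that $S$ and $T$ are heavy subspaces of dimension $s\le n$ with $S\cap T=\{0\}$. Choosing a complement $W$ of $S\oplus T$ in $\ff_q^{2n}$ and setting $U:=S\oplus W$, I obtain a subspace of dimension $2n-s$ with $S\le U$ and $U\cap T=\{0\}$ (the latter because the sum $S\oplus T\oplus W$ is direct). By Lemma \ref{ChambeerExtensions} the flag $\{S,U\}$ extends to a chamber $C$ with $C_s=S$ and $C_{2n-s}=U$. Since $S$ is heavy, $C\in\EKR$; but then part (a) applied to the heavy subspace $T$ forces $C_{2n-s}\cap T\neq\{0\}$, contradicting $U\cap T=\{0\}$. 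Hence distinct heavy $s$-subspaces cannot be skew, which is the claim.

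\textbf{Part (c).} By part (b) the heavy $s$-subspaces form a family of $s$-subspaces of $\ff_q^{2n}$ any two of which intersect non-trivially, and $2n\ge 2s$ because $s\le n$. The Erdős–Ko–Rado theorem for subspaces \cite{ekr_vectorspaces} then bounds such a family by the number $\Gauss{2n-1}{s-1}$ of $s$-subspaces through a fixed point, giving the first bound. For the heavy $(2n-s)$-subspaces the pairwise-meeting argument is too weak (for $s<n$ any two $(2n-s)$-subspaces already meet), so here I pass to the dual: the duality $C\mapsto C^{*}$ with $(C^{*})_i=(C_{2n-i})^{\circ}$ (annihilator in the dual space) preserves oppositeness, so it sends $\EKR$ to a maximum EKR-set $\EKR^{*}$ of the dual space, and it sends a heavy $(2n-s)$-subspace $S$ to a heavy $s$-subspace $S^{\circ}$ of $\EKR^{*}$. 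Applying the first bound to $\EKR^{*}$ then yields at most $\Gauss{2n-1}{s-1}$ heavy $(2n-s)$-subspaces.

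The routine verifications (that $z_sz_{2n-s}$ is the number of chambers through $S$, and the linear-algebra construction of $U$) are immediate. The only genuinely delicate points are the correct invocation of the Frankl–Wilson bound in the boundary case $s=n$, where $2n=2s$ and the extremal intersecting families are no longer only the point-pencils, and the bookkeeping of the duality in part (c) — verifying that $C\mapsto C^{*}$ interchanges the roles of $s$ and $2n-s$, preserves oppositeness, and hence carries heavy subspaces to heavy subspaces. I expect this duality bookkeeping to be the main thing to state carefully.
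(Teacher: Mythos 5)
Your proof is correct and takes essentially the same route as the paper: (a) read off from Lemma \ref{C: key for the proof}, (b) by constructing a chamber through one heavy subspace whose $(2n-s)$-subspace avoids the other, and (c) via the Frankl--Wilson bound plus duality. The only (harmless) deviation is part (d), which you obtain as the case $s=n$ of (a) rather than by the paper's separate direct argument with opposite chambers; your shortcut is valid and slightly cleaner.
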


\begin{proof}
\begin{itemize}
\item[(a)] The statement follows directly from Lemma \ref{C: key for the proof}.
\item[(b)] Assume that two heavy subspaces $S_1$ and $S_2$ of dimension $s\leq n$ meet trivially. As $S_1$ and $S_2$ meet trivially, there exists a chamber $C$ with $C_s=S_1$ and $C_{2n-s}\cap S_2=\{0\}$. As $S_1$ is heavy, then $C\in\EKR$. But this contradicts (a) applied to $S_2$.
  \item[(c)] For $s\le n$, the heavy $s$-subspaces mutually intersect non-trivially, so the EKR-theorem for vector spaces (\cite{ekr_vectorspaces}) shows that there exists at most $\Gauss{2n-1}{s-1}$ heavy $s$-subspaces. The second statement follows by duality.
  \item[(d)] Consider two skew $n$-subspaces $S_1$ and $S_2$ and assume that there is a chamber in $\EKR$ that contains $S_1$ and there is a chamber $C$ in $\EKR$ that contains $S_2$. If $S_1$ is heavy, all chambers that have $S_1$ as their $n$-subspace are in $\EKR$. One of these chambers is opposite to $C$, contradiction.
 \end{itemize}
\end{proof}

\begin{lemma} \label{L: properties of O}
	\begin{itemize}
	\item[(a)] For $n\geq k$ we have $\deg(\Gauss{n}{k})=k(n-k)$.\\
	\item[(b)] For $s\in \mathbb{N}_0$ we have $\deg(z_s)=\frac{s(s-1)}{2}$.
%	\item[c)] If $f(q)$ and $g(q)$ are two polynomials and $deg(f(q))>deg(g(q))$, there is an $m(f,g)\in \mathbb{N}$, such that $f(q)>g(q)$ for all $q\geq m(f,g)$.
	\end{itemize}
\end{lemma}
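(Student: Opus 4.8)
For $n\geq k$ we have $\deg\left(\Gauss{n}{k}\right)=k(n-k)$, and for $s\in\mathbb{N}_0$ we have $\deg(z_s)=\frac{s(s-1)}{2}$.

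The plan is to treat both parts by a direct degree computation, reading off the leading term of each expression as a polynomial in $q$. For part (a), I would start from the product formula in Notation \ref{Def_chi_PG}, namely $\Gauss{n}{k}_q=\prod_{i=1}^k\frac{q^{n-k+i}-1}{q^i-1}$. Each factor $\frac{q^{n-k+i}-1}{q^i-1}$ is a rational function of $q$ whose numerator has degree $n-k+i$ and whose denominator has degree $i$; since each such factor is actually a polynomial in $q$ (being a Gaussian coefficient of the shape $\Gauss{n-k+i}{1}=1+q+\cdots+q^{n-k+i-1}$ divided appropriately, or more simply because the ratio equals $\frac{q^{n-k+i}-1}{q^i-1}$ which has degree $n-k$), the degree contributed by the $i$-th factor is $(n-k+i)-i=n-k$. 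Summing over $i=1,\dots,k$ gives $\deg\left(\Gauss{n}{k}\right)=\sum_{i=1}^k(n-k)=k(n-k)$, as claimed.

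For part (b), I would use Lemma \ref{basicnumbers}(b), which gives $z_s(q)=(q-1)^{-s}[s]_q=(q-1)^{-s}\prod_{i=1}^s(q^i-1)$. The numerator $\prod_{i=1}^s(q^i-1)$ has degree $\sum_{i=1}^s i=\frac{s(s+1)}{2}$, while the denominator $(q-1)^s$ has degree $s$. Hence $\deg(z_s)=\frac{s(s+1)}{2}-s=\frac{s(s-1)}{2}$, which is exactly the asserted value. Alternatively, one can write each factor as $\frac{q^i-1}{q-1}=1+q+\cdots+q^{i-1}$ of degree $i-1$, so that $\deg(z_s)=\sum_{i=1}^s(i-1)=\frac{s(s-1)}{2}$ directly; I would include this second viewpoint since it makes manifest that $z_s$ is a genuine polynomial (not merely a rational function) of the stated degree.

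There is essentially no obstacle here; the only point requiring a word of care is justifying that the given quantities really are polynomials in $q$ so that the notion of $\deg$ is unambiguous, and that the leading coefficients do not cancel. Since each factor $\frac{q^m-1}{q^e-1}$ (with $e\mid m$ in the relevant cases, or more generally as a Gaussian binomial) has positive leading coefficient $1$, no cancellation occurs and the degree of a product is the sum of the degrees. I would state this observation once and then the two degree sums follow immediately.
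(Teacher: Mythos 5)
Your proof is correct and follows essentially the same route as the paper: the paper deduces (a) directly from the product definition of the Gaussian coefficient and obtains (b) by writing $z_s=\Gauss{s}{1}\Gauss{s-1}{1}\cdots\Gauss{2}{1}$ and summing the degrees $i-1$, which is exactly your second viewpoint on part (b). One caution: your parenthetical claim that each individual factor $\frac{q^{n-k+i}-1}{q^i-1}$ is a polynomial in $q$ is false in general (e.g.\ $\frac{q^4-1}{q^3-1}$ is not a polynomial), but this is harmless, since degree is additive for rational functions and the full product $\Gauss{n}{k}$ is a polynomial, so the computation $\deg\Gauss{n}{k}=\sum_{i=1}^{k}(n-k)=k(n-k)$ stands.
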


\begin{proof}
Part (a) follows directly from the definition of the Gaussian coefficient. For part (b) we have
\begin{align*}
	\begin{split}
		z_s=\Gauss{s}{1}\Gauss{s-1}{1} \ldots \Gauss{2}{1},
	\end{split}
\end{align*}
with (a) and Gauss summation we obtain
\begin{align*}
	\deg(z_s)=(s-1)+(s-2)+\ldots +1=\frac{s(s-1)}{2}.
\end{align*}
%Finally, c) is trivial.
\end{proof}

\begin{remark} \label{R: O size of EKR-set}
By Theorem \ref{thm:maxsize} the maximum EKR-sets of chambers of $\ff_q^{2n}$
 contain $\frac{z_{2n}}{1+q^n}$ elements. Lemma \ref{L: properties of O} shows that $\deg(\frac{z_{2n}}{1+q^n})={2n^2-2n}$. \\
\end{remark}

We need the following Hilton-Milner type theorems.

\begin{result} \label{L: HM_new}
Let $M$ be a set of pairwise intersecting $n$-subspaces in $\ff_q^{2n}$. Assume that there is no subspace $P$ of dimension $1$ or $2n-1$, such that all subspaces in $M$ contain $P$ or are contained in $P$.
\begin{itemize}
\item[(a)] (Theorem 6 in \cite{Ihringer_Hilton_Milner}) Let $q\geq 4$ and $n\geq 4$. Then $|M|<3\Gauss{n}{1}\Gauss{2n-2}{n-2}$.
\item[(b)] (Theorem 6.1 in \cite{k=3_Hilton_Milner})
  Let $n=3$. Then $|M|\leq q^5+2q^4+3q^3+2q^2+q+1$.
\end{itemize}
\end{result}

Note that for $n=3$, we have $q^5+2q^4+3q^3+2q^2+q+1=\Gauss{n}{1}\Gauss{2n-2}{n-2}-(q^2+q)$.

\begin{lemma} \label{L: omega}
Let $n\geq 3$ and $q\geq 4$ and suppose that $\EKR$ is a
maximum EKR-set of chambers of $\ff_q^{2n}$. Furthermore, suppose that $\EKR$ is not of classical type.  Then there exist less than $3\Gauss{n}{1}\Gauss{2n-2}{n-2}$ heavy $n$-subspaces.
\end{lemma}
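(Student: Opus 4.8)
The plan is to argue by contradiction: I assume $\EKR$ is not classical but that the set $M$ of heavy $n$-subspaces satisfies $|M|\ge 3\Gauss{n}{1}\Gauss{2n-2}{n-2}$, and then derive a contradiction. The first ingredient is that $M$ is a family of pairwise intersecting $n$-subspaces, which is precisely Lemma \ref{L: heavy subspaces pairwise meet.}(b) in the case $s=n$. Hence the Hilton--Milner type dichotomy of Result \ref{L: HM_new} is available for $M$.

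First I would invoke Result \ref{L: HM_new} in contrapositive form. Whenever $M$ has no common $1$-subspace and no common $(2n-1)$-subspace, its conclusion bounds $|M|$ strictly below $3\Gauss{n}{1}\Gauss{2n-2}{n-2}$ in both regimes, since for $n=3$ the paper records $\Gauss{n}{1}\Gauss{2n-2}{n-2}-(q^2+q)<3\Gauss{n}{1}\Gauss{2n-2}{n-2}$. Thus the standing assumption $|M|\ge 3\Gauss{n}{1}\Gauss{2n-2}{n-2}$ forces either a $1$-subspace $P$ lying in every member of $M$, or a $(2n-1)$-subspace $H$ containing every member of $M$. These two situations are interchanged by the natural duality of $\Gamma_{2n}(q)$, which carries a maximum EKR-set to a maximum EKR-set, a classical one to a classical one, and heavy $n$-subspaces to heavy $n$-subspaces; so I would treat only the common-point case and obtain the common-hyperplane case by applying the same argument to the dual EKR-set.

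So suppose every heavy $n$-subspace contains the fixed $1$-subspace $P$. Let $\EKR_P$ be the classical EKR-set of all chambers whose $n$-subspace contains $P$ (Example \ref{ex:classical}); by Theorem \ref{thm:maxsize} it has size $\frac{z_{2n}}{q^n+1}=|\EKR|$. Since $\EKR$ is not classical we have $\EKR\neq\EKR_P$, and equality of sizes then forces $\EKR\not\subseteq\EKR_P$, so there is a chamber $C\in\EKR$ with $P\not\le C_n$. At this point I would apply Lemma \ref{L: heavy subspaces pairwise meet.}(d): every heavy $n$-subspace meets $C_n$ nontrivially. Consequently $M$ is contained in the family of $n$-subspaces $U$ with $P\le U$ and $U\cap C_n\neq\{0\}$.

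It then remains to count this family and compare. Passing to the quotient $\ff_q^{2n}/P\cong\ff_q^{2n-1}$, the $n$-subspaces through $P$ correspond to $(n-1)$-subspaces; since $P\cap C_n=\{0\}$ the image $\overline{C_n}$ has dimension $n$, and $U\cap C_n\neq\{0\}$ is equivalent to $\overline U\cap\overline{C_n}\neq\{0\}$. By Lemma \ref{numberofsubspaces} the number of $(n-1)$-subspaces of $\ff_q^{2n-1}$ skew to $\overline{C_n}$ is $q^{n(n-1)}$, so those meeting it number $\Gauss{2n-1}{n-1}-q^{n(n-1)}$, giving $|M|\le\Gauss{2n-1}{n-1}-q^{n(n-1)}$. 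The argument would finish with the elementary inequality $\Gauss{2n-1}{n-1}-q^{n(n-1)}<3\Gauss{n}{1}\Gauss{2n-2}{n-2}$ for all $n\ge 3$ and $q\ge 4$, contradicting the assumption. I expect this last polynomial comparison to be the only delicate point: by Lemma \ref{L: properties of O} both sides have degree $n^2-n-1$, so a bare degree count does not suffice, and one must observe that the leading coefficient of the right-hand side is $3$ while that of $\Gauss{2n-1}{n-1}-q^{n(n-1)}$ is $1$, together with a uniform bound on the lower-order terms that is already valid at $q=4$.
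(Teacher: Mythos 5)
Your proposal follows the same skeleton as the paper's proof: the Hilton--Milner dichotomy of Result \ref{L: HM_new} applied to the pairwise-intersecting family $M$ of heavy $n$-subspaces (Lemma \ref{L: heavy subspaces pairwise meet.}(b)), reduction of the common-hyperplane case to the common-point case by duality, the observation that non-classicality yields a chamber $C\in\EKR$ with $P\not\le C_n$, and Lemma \ref{L: heavy subspaces pairwise meet.}(d) to force every member of $M$ to meet $C_n$. The only divergence is the endgame, and it is where your write-up is incomplete. The paper bounds $|M|$ by over-counting through $2$-subspaces: every admissible $n$-subspace contains one of the $\gauss{n}{1}$ subspaces $\langle P,Q\rangle$ with $Q$ a $1$-subspace of $C_n$, and each $2$-subspace lies in $\gauss{2n-2}{n-2}$ $n$-subspaces, so $|M|\le\gauss{n}{1}\gauss{2n-2}{n-2}$, which is trivially below $3\gauss{n}{1}\gauss{2n-2}{n-2}$ with no polynomial estimates at all. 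You instead compute the exact count $\gauss{2n-1}{n-1}-q^{n(n-1)}$ in the quotient $\ff_q^{2n}/P$ (this count and the equivalence $U\cap C_n\neq\{0\}\Leftrightarrow\overline{U}\cap\overline{C_n}\neq\{0\}$ are correct), but you are then left needing $\gauss{2n-1}{n-1}-q^{n(n-1)}<3\gauss{n}{1}\gauss{2n-2}{n-2}$ for all $n\ge 3$, $q\ge 4$, which you only gesture at via degrees and leading coefficients; the phrase ``a uniform bound on the lower-order terms that is already valid at $q=4$'' is exactly the part that would require real work if done that way. The claim is true, and it closes in one line without any asymptotics: every $(n-1)$-subspace of $\ff_q^{2n-1}$ meeting $\overline{C_n}$ contains one of the $\gauss{n}{1}$ one-dimensional subspaces of $\overline{C_n}$, and each of these lies in $\gauss{2n-2}{n-2}$ $(n-1)$-subspaces, so your exact count is at most $\gauss{n}{1}\gauss{2n-2}{n-2}$ --- the paper's over-count dominates your exact count, and the factor $3$ gives strictness. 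Substituting this double count for your leading-coefficient sketch makes your argument complete, at which point it is essentially the paper's proof organized as a contradiction rather than as three cases.
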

\begin{proof}
Let $M$ be the set of heavy $n$-subspaces in $\EKR$.

\textbf{Case 1.} There exists a $1$-dimensional subspace $P$ that is contained in all subspaces of $M$.\\
Since $\EKR$ ist not of classical type, there exists a chamber in $\EKR$ whose $n$-subspace $S$ does not contain $P$. By Lemma \ref{L: heavy subspaces pairwise meet.} every subspace of $M$ meets $S$ non-trivially. The number of $2$-dimensional subspaces that contain $P$ and meet $S$ is $\Gauss{n}{1}$. The number of $n$-subspaces incident with a given $2$-subspace is $\Gauss{2n-2}{n-2}$. Together we get
\begin{align*}
 	|M|\leq \Gauss{n}{1} \Gauss{2n-2}{n-2}.
\end{align*}

\textbf{Case 2.} There exists a $(2n-1)$-dimensional subspace $P$ that contains all subspaces of $M$.\\
This is dual to Case 1.\\

\textbf{Case 3.} There does not exist a subspace $P$ as in Case 1 or Case 2.\\
Then Result \ref{L: HM_new} implies
 \begin{align*}
 	|M|\leq 3\Gauss{n}{1}\Gauss{2n-2}{n-2}.
  \end{align*}
\end{proof}

\begin{prop} \label{P: heavy chambers}
Let $n\geq 3$ and $q\geq 4$. Suppose that $\EKR$ is a maximum EKR-set. Furthermore, suppose that $\EKR$ is not classical
and that every chamber of $\EKR$ is heavy. Then there exists a constant $m$ depending on $n$ but not on $q$ such that  $|\EKR|<mq^{2n^2-2n-1}$.
\end{prop}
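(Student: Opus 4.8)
The plan is to exploit the hypothesis that every chamber of $\EKR$ is heavy in order to cover $\EKR$ by the ``stars'' of the heavy subspaces, and then to bound both the size of each such star and the number of heavy subspaces of each dimension. Since each chamber of $\EKR$ is heavy, it contains at least one heavy subspace; hence, writing $h_s$ for the number of heavy subspaces of dimension $s$, a crude union bound (overcounting chambers that contain several heavy subspaces, which only helps for an upper estimate) gives
\begin{align*}
|\EKR|\le \sum_{s=1}^{2n-1} h_s\cdot z_sz_{2n-s},
\end{align*}
where $z_sz_{2n-s}$ is, by Lemma \ref{ChambeerExtensions}, exactly the number of chambers through a fixed $s$-subspace.

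Next I would bound $h_s$ for each dimension and track degrees in $q$ via Lemma \ref{L: properties of O}. By Lemma \ref{L: heavy subspaces pairwise meet.}(c), for $s\le n$ we have $h_s\le \Gauss{2n-1}{s-1}$ and $h_{2n-s}\le \Gauss{2n-1}{s-1}$, so every dimension $s\ne n$ is controlled by a Gaussian binomial of degree $(s-1)(2n-s)$. A direct computation then shows that the degree of the term $h_s\,z_s z_{2n-s}$ equals $2n^2-3n+s$ for $s\le n$ (and the same value for the dual dimension $2n-s$). This is increasing in $s$ and attains its maximum $2n^2-2n-1$ at $s=n-1$ and $s=n+1$, so every term with $s\ne n$ already has degree at most $2n^2-2n-1$.

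The critical term is $s=n$, and here the non-classical hypothesis enters decisively. The generic bound $h_n\le \Gauss{2n-1}{n-1}$ has degree $n^2-n$, which multiplied by the degree $n^2-n$ of $z_n^2$ yields degree $2n^2-2n$, matching $\deg\!\left(|\EKR|\right)$ by Remark \ref{R: O size of EKR-set} and thus being useless. Since $n\ge 3$ and $q\ge 4$, Lemma \ref{L: omega} instead gives $h_n<3\Gauss{n}{1}\Gauss{2n-2}{n-2}$, whose degree is $(n-1)+(n^2-2n)=n^2-n-1$; multiplying by the degree $n^2-n$ of $z_n^2$ produces degree $2n^2-2n-1$, shaving off precisely one power of $q$ and bringing the central term down to the same order as the side terms.

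Finally, the right-hand side is a sum of $2n-1$ terms, each a polynomial in $q$ of degree at most $2n^2-2n-1$ with coefficients depending only on $n$; hence the sum is itself such a polynomial, and since it is bounded by $m\,q^{2n^2-2n-1}$ for a suitable $m=m(n)$ and all $q\ge 4$, we obtain $|\EKR|<m\,q^{2n^2-2n-1}$. The main obstacle is exactly the dimension-$n$ term: without the assumption that $\EKR$ is not classical, the generic count forces degree $2n^2-2n$ and the estimate collapses, so the heart of the argument is recognizing that the Hilton--Milner-type improvement of Lemma \ref{L: omega} is just strong enough to reduce the dominant contribution by one power of $q$, after which every other dimension is comfortably subleading.
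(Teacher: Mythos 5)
Your proposal is correct and follows essentially the same route as the paper's proof: cover $\EKR$ by the stars of heavy subspaces via Lemma \ref{ChambeerExtensions}, bound the number of heavy $s$-subspaces for $s\neq n$ by $\Gauss{2n-1}{s-1}$ using Lemma \ref{L: heavy subspaces pairwise meet.}(c), use the Hilton--Milner-type bound of Lemma \ref{L: omega} for the critical dimension $s=n$, and conclude by the degree count of Lemma \ref{L: properties of O}. Your degree computations (each term having degree at most $2n^2-2n-1$) match the paper's exactly, and your added remark on why the non-classical hypothesis is indispensable for the $s=n$ term accurately identifies the heart of the argument.
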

\begin{proof}
For $1\le s\le 2n-1$ let $\alpha_s$ be the number of heavy $s$-subspaces in $\ff_q^{2n}$. Since every chamber in $\EKR$ contains at least one heavy subspace, Lemma \ref{ChambeerExtensions} implies that
\begin{align*}
 |\EKR|  \leq \sum\limits_{i=1}^{2n-1} \alpha_s \cdot z_sz_{2n-s}=\alpha_nz_n^2+\sum_{s=1}^{n-1}(\alpha_s+\alpha_{2n-s})z_sz_{2n-s}.
\end{align*}
For $\alpha_n$ we use the upper bound from Lemma \ref{L: omega} and for the other $\alpha_s$ we use the upper bound from part (c) of Lemma \ref{L: heavy subspaces pairwise meet.} and find
\begin{align}\label{A: heavy chamber}
	|\EKR|< 3\Gauss{n}{1}\Gauss{2n-2}{n-2}z_n^2 + 2 \sum\limits_{s=1}^{n-1}\Gauss{2n-1}{s-1}  z_sz_{2n-s}.
\end{align}
For $1\le s<n$ Lemma \ref{L: properties of O} implies
\begin{align*}
	\deg(\Gauss{2n-1}{s-1}  z_sz_{2n-s})=2n^2-3n+s\le 2n^2-2n-1
\end{align*}
and
\begin{align*}
\deg(3\Gauss{n}{1}\Gauss{2n-2}{n-2}z_n^2)=q^{2n^2-2n-1}.
\end{align*}
Hence the right-hand side of \eqref{A: heavy chamber} is a polynomial in $q$ of degree $2n^2-2n-1$. The result follows.
\end{proof}

Now we consider the case in which $\EKR$ contains a light chamber.

\begin{prop} \label{P: light chamber}
	Let $\EKR$ be a maximum Erd\H os-Ko-Rado set of chambers of $\ff_q^{2n}$. Assume that $\EKR$ contains a light chamber. Then  there exists a constant $m$ such that  $|\EKR|<mq^{2n^2-2n-1}$.
\end{prop}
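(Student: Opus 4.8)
The plan is to fix a light chamber $C\in\EKR$ and bound $|\EKR|$ by sorting every $D\in\EKR$ according to the position of its middle space $D_n$ relative to $C_n$. Since $\EKR$ contains no two opposite chambers, every $D\in\EKR$ is non-opposite to $C$, so there is an index $i\in\{1,\dots,2n-1\}$ with $C_i\cap D_{2n-i}\neq\{0\}$. I split $\EKR$ into three classes: (i) $D_n=C_n$; (ii) $D_n\neq C_n$ but $D_n\cap C_n\neq\{0\}$; (iii) $D_n\cap C_n=\{0\}$. Writing $x,z,y$ for their sizes, $|\EKR|=x+y+z$, and the goal is to show each summand has degree at most $2n^2-2n-1$ in $q$.

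The two easy classes come first. The count $x$ is at most the number $z_n^2$ of chambers through the fixed $n$-space $C_n$ (Lemma \ref{basicnumbers}), which has degree $n^2-n$ by Lemma \ref{L: properties of O}. Class (ii) is precisely the quantity ``$z$'' of Lemma \ref{C: key for the proof} applied to $S=C_n$; as $C$ is light, $C_n$ is light, so part (c) gives $z\le \frac{z_{2n}}{1+q^n}-q^{2n^2-2n}$. The decisive arithmetic point is that $\frac{z_{2n}}{1+q^n}$ has degree $2n^2-2n$ and leading term $q^{2n^2-2n}$ (Remark \ref{R: O size of EKR-set}), so the subtraction cancels the top term and this bound has degree at most $2n^2-2n-1$; this cancellation is exactly what makes Lemma \ref{C: key for the proof}(c) a genuine saving of one power of $q$.

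The main work is the skew class $y$. For $D$ in class (iii) the meeting index $i$ furnished by non-oppositeness cannot be $n$ (that would give $C_n\cap D_n\neq\{0\}$), so $i\neq n$. I claim moreover that $D_i\neq C_i$: if $i<n$ then $D_i\subseteq D_n$ and $C_i\subseteq C_n$ force $D_i\cap C_i\subseteq D_n\cap C_n=\{0\}$, whereas if $i>n$ then $C_n$ and $D_n$ would be two $n$-subspaces of the common space $C_i=D_i$ of dimension at most $2n-1$ and would therefore intersect nontrivially; both possibilities contradict $D_n\cap C_n=\{0\}$. Hence $D$ satisfies $D_i\neq C_i$ and $C_i\cap D_{2n-i}\neq\{0\}$, i.e. $D$ is counted by the quantity ``$z$'' of Lemma \ref{C: key for the proof} applied to $S=C_i$. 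Since every $C_i$ is light, each such quantity is again at most $\frac{z_{2n}}{1+q^n}-q^{2n^2-2n}$, and a union bound over the $2n-2$ admissible indices yields $y\le (2n-2)\big(\frac{z_{2n}}{1+q^n}-q^{2n^2-2n}\big)$.

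Combining the three estimates gives $|\EKR|\le z_n^2+(2n-1)\big(\frac{z_{2n}}{1+q^n}-q^{2n^2-2n}\big)$, a polynomial in $q$ of degree at most $2n^2-2n-1$, whence $|\EKR|<mq^{2n^2-2n-1}$ for a suitable constant $m=m(n)$. The step I expect to require the most care is the incidence argument of the third paragraph, establishing that a chamber meeting $C$ only away from the middle and skew to $C_n$ is always ``caught'' by one of the light spaces $C_i$ with $D_i\neq C_i$; coupled with the top-term cancellation in $\frac{z_{2n}}{1+q^n}-q^{2n^2-2n}$, it is what drives the whole estimate one power of $q$ below the true size $\frac{z_{2n}}{1+q^n}$ of a maximum EKR-set.
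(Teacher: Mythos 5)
Your proof is correct and follows essentially the same route as the paper's: both fix the light chamber $C$, cover $\EKR$ via non-oppositeness by the events ``$B_s=C_s$'' or ``$B_s\neq C_s$ and $B_{2n-s}\cap C_s\neq\{0\}$'', apply Lemma \ref{C: key for the proof}(c) (valid since every $C_s$ is light), and conclude by the degree count in which the cancellation of the leading term $q^{2n^2-2n}$ is the decisive point. Your refinements --- splitting at the middle dimension, bounding the class $D_n=C_n$ trivially by $z_n^2$, and the observation that a chamber with $D_n\cap C_n=\{0\}$ is caught by some $C_i$ with $D_i\neq C_i$ --- merely reorganize the paper's summation $\sum_{s=1}^{2n-1}(x^s+z^s)$ without changing the underlying argument.
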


\begin{proof}
	Let $C$ be a light chamber in $\EKR$.
	Every chamber $B=(B_1,\ldots,B_{2n-1})$ in $\EKR$ has to satisfy $B_{2n-s}\cap C_s\neq \{0 \}$ for some $1\leq s\leq 2n-1$. Let $x^s$ be the number of chambers $B$ with $B_s=C_s$ and let $z^s$ be the number of chambers $B$ with $B_s\neq C_s$ and $B_{2n-s}\cap C_s\neq \{0 \}$. Then
	\begin{align} \label{A: |F| light chamber}
		|\EKR|\leq \sum\limits_{s=1}^{2n-1} (x^s+z^s).
	\end{align}
Lemma \ref{C: key for the proof} implies
	\begin{align*}
	\begin{split}
		x^s&\leq z_sz_{2n-s}-q^{2n^2-2ns-n+s^2}, \\
		z^s&\leq \frac{z_{2n}}{1+q^n}-q^{2n^2-2n}.
	\end{split}
	\end{align*}
	 First, we consider $x^s$. From Lemma \ref{L: properties of O} we have
	 \begin{align*}
	 	\deg(z_sz_{2n-s})& = \frac{s(s-1)}{2}+\frac{(2n-s)(2n-s-1)}{2}=2n^2-2ns-n+s^2.
	 \end{align*}
Recall $1\leq s\leq 2n-1$. Since the leading coefficient of $z_sz_{2n-s}$ is $1$, we get
\begin{align*}
	\deg(z_sz_{2n-s}-q^{2n^2-2ns-n+s^2})\leq 2n^2-2ns-n+s^2-1\leq 2n^2-3n+1.
\end{align*}
Now we consider $z^s$. We have
	 \begin{align*}
	 \deg(\frac{z_{2n}}{1+q^n})&=n(2n-1)-n=2n^2-2n.
		\end{align*}
	Since the leading coefficient of $\frac{z_{2n}}{1+q^n}$ is $1$, we get
\begin{align*}
	\deg(\frac{z_{2n}}{1+q^n}-q^{2n^2-2n})\leq 2n^2-2n-1.
\end{align*}
Using (\ref{A: |F| light chamber}) yields the desired result.
\end{proof}

Theorem \ref{thm:Bound} follows directly from Remark \ref{R: O size of EKR-set}, Proposition \ref{P: heavy chambers} and \ref{P: light chamber}.

\begin{remark} 
For small values of $n$ a precise investigation of the polynomial inequalities (\ref{A: heavy chamber}) and (\ref{A: |F| light chamber}) enables us to give an explicit value for $m_n$ occurring in Theorem \ref{thm:Bound}, as given in the following table. \begin{table}[htbp]
\begin{tabular}{lllllll}
\toprule
$n$ & 3 & 4 & 5 & 6 & 7& 8 \\ 
$m_n$ & 23& 46& 77& 116 &163& 218\\ 
\bottomrule
\end{tabular}
\end{table}
\end{remark}

\section{The case $n=2$}

We have shown for $n\geq 3$ that every maximum EKR-set of chambers is of classical type for  sufficiently large $q$. In this section we prove the same result for $n=2$ and all $q$. In order to show this, we need to closely examine the geometric properties of chambers of $\ff_q^4$. We use observations made in \cite{heeringmetsch2023secondmax}. Since \cite{heeringmetsch2023secondmax} utilizes the language of projective spaces, we switch to projective notation for this section. A chamber of $\ff_q^4$ is now a chamber of $\PG(3,q)$ that consist of a (projective) point, a (projective) line and a (projective) plane that are mutually incident. 
We adopt some notation from \cite{heeringmetsch2023secondmax}.

\begin{notation}
\begin{enumerate}[1.]
\item Chambers of $\PG(3,q)$ consisting of a point $P$, a line $\ell$ and a plane $\pi$ are written as triples $(P,\ell,\pi)$.
\item The chambers of an EKR-set $\EKR$ are also called \emph{$\EKR$-chambers}.
\item  If $\ell$ is a line, then the number of chambers of an EKR-set $\EKR$ that contain $\ell$ is called the \emph{weight} of $\ell$.
\end{enumerate}
\end{notation}

\begin{lemma}  \label{P: weight of lines}
Let $\EKR$ be a maximum independent set of chambers of $\PG(3,q)$ and let $\ell$ be a line.
Then $\ell$ has weight $0$, $1$, $2$, $q+1$, $2q+1$ or $(q+1)^2$. Moreover, the following hold.
\begin{enumerate}[\rm (a)]
\item The line $\ell$ has weight $(q+1)^2$ if and only if it meets the line of every $\EKR$-chamber.
\item If $\ell$ has weight $2q+1$, then $\ell$ is incident with a point $P$ and a plane $\pi$ such that the $\EKR$-chambers that contain $\ell$ are the chambers that contain $\ell$ and $P$ or $\pi$. Furthermore every $\EKR$-chamber $(Q,h,\tau)$ with $h\cap\ell=\emptyset$ satisfies $Q\in\pi$ and $P\in \tau$. In particular, every line $h$ of a chamber in $\EKR$ with $h\cap\ell=\emptyset$ has weight $1$.
\item If $\ell$ has weight $q+1$, then one of the following two cases occurs.
\begin{enumerate}[(i)]
\item There exists a plane $\pi$ on $\ell$, such that the $\EKR$-chambers of $\ell$ are the $q+1$ chambers that contain $\pi$ and $\ell$. In this case every $\EKR$-chamber $(Q,h,\tau)$ satisfies $h\cap\ell\not=\emptyset$ or $Q\in\pi$.
\item There exists a point $P$ on $\ell$, such that the $\EKR$-chambers of $\ell$ are the $q+1$ chambers that contain $P$ and $\ell$. In this case every $\EKR$-chamber $(Q,h,\tau)$ satisfies $h\cap\ell\not=\emptyset$ or $P\in\tau$. 
    \end{enumerate}
\end{enumerate}
\end{lemma}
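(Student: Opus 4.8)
The plan is to work entirely inside the grid of chambers that contain the fixed line $\ell$. Write each such chamber as $(P,\ell,\pi)$ with $P$ a point on $\ell$ and $\pi$ a plane on $\ell$; there are $q+1$ choices for each, so these $(q+1)^2$ chambers form a $(q+1)\times(q+1)$ grid whose rows are indexed by the points of $\ell$ and whose columns by the planes on $\ell$. First I would record the only way a grid chamber $(P,\ell,\pi)$ can be opposite to another chamber $(Q,h,\tau)$: oppositeness forces $h\cap\ell=\emptyset$, $P\notin\tau$ and $Q\notin\pi$. Thus only chambers whose line is skew to $\ell$ can be opposite to a grid chamber. For a fixed such \emph{skew} chamber $\sigma=(Q,h,\tau)$, the intersection $P_\sigma:=\tau\cap\ell$ is a single point (else $\ell$ and $h$ would be coplanar, hence meet) and $\pi_\sigma:=\langle\ell,Q\rangle$ is a single plane (as $Q\notin\ell$); the grid chambers \emph{not} opposite to $\sigma$ are exactly those with $P=P_\sigma$ or $\pi=\pi_\sigma$, i.e.\ the union of one row and one column of the grid, which I will call the \emph{cross} of $\sigma$.

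The second step is to use maximality. Since $\EKR$ is maximum it is maximal, and a grid chamber is opposite to no chamber of $\EKR$ precisely when it lies in the cross of every skew chamber of $\EKR$; by maximality such a chamber must itself belong to $\EKR$. Hence the set of $\EKR$-chambers on $\ell$ is exactly $\bigcap_\sigma \mathrm{cross}(\sigma)$, the intersection taken over all skew chambers $\sigma\in\EKR$, and the weight of $\ell$ is the size of this intersection.

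It then remains to compute the possible sizes of an intersection of crosses. Encoding each skew chamber by the pair $(P_\sigma,\pi_\sigma)$, let $A$ and $B$ be the sets of points $P_\sigma$ and planes $\pi_\sigma$ that occur. If no skew chamber exists the intersection is the whole grid, giving weight $(q+1)^2$; this is precisely the content of (a). If $|A|=|B|=1$ the intersection is a single full cross, of size $2q+1$, and reading the common point and plane back into the grid yields (b). If exactly one of $A,B$ is a singleton the intersection is a single full row or full column, of size $q+1$, giving the two alternatives of (c), including their trailing incidence statements. The only remaining case is $|A|\ge 2$ and $|B|\ge 2$: here a short argument shows the intersection has at most two cells. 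Indeed a cell $(P,\pi)$ survives iff every pair $(P_\sigma,\pi_\sigma)$ lies on the row $P$ or the column $\pi$; if three distinct points $P$ furnished surviving cells, then any cross-point lying in a fourth row is forced simultaneously into the three corresponding columns, which makes those columns coincide, and one concludes that \emph{all} pairs $(P_\sigma,\pi_\sigma)$ share a single column, contradicting $|B|\ge 2$. This exhausts the cases and produces the list $0,1,2,q+1,2q+1,(q+1)^2$.

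The part that needs the most care is the final clause of (b), that every line $h$ of an $\EKR$-chamber with $h\cap\ell=\emptyset$ has weight $1$. Here I would apply the already-proved weight list to $h$ and chase the crosses that the $2q+1$ chambers on $\ell$ (all skew to $h$) induce in the grid of the set $W_h$ of $\EKR$-chambers on $h$: because the cross on $\ell$ contains at least two points and at least two planes, the induced crosses involve at least two rows and at least two columns of the grid on $h$, and intersecting them collapses $W_h$ to the single cell given by $\sigma$ itself. As a cross-check, and as an independent quick proof of (a), one can use Corollary \ref{L: preparation for EKR proof} with $s=2$, which gives $q^{2}w+y=q^{2}(q+1)^{2}$ where $w$ is the weight of $\ell$ and $y$ the number of $\EKR$-chambers whose line is skew to $\ell$; in particular $w=(q+1)^2$ holds exactly when $y=0$, that is, exactly when $\ell$ meets the line of every $\EKR$-chamber.
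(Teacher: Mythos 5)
Your proposal is correct, but it takes a genuinely different route from the paper: the paper does not prove this lemma at all, it simply cites Proposition 2.3 and Notation 2.8 of \cite{heeringmetsch2023secondmax}, whereas you give a self-contained argument. Your grid/cross setup is sound: oppositeness of $(P,\ell,\pi)$ and $(Q,h,\tau)$ does require $h\cap\ell=\emptyset$, $P\notin\tau$ and $Q\notin\pi$; each $\EKR$-chamber $\sigma=(Q,h,\tau)$ with $h$ skew to $\ell$ cuts out exactly the cross through the cell $(\tau\cap\ell,\,\langle\ell,Q\rangle)$; and since a maximum independent set is maximal, the $\EKR$-chambers on $\ell$ are precisely $\bigcap_\sigma\mathrm{cross}(\sigma)$. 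The case analysis on the sets $A$, $B$ of occurring points and planes then yields the weight list and parts (a)--(c) simultaneously, with the incidence statements in (b) and (c) falling out of the identifications $P_\sigma=\tau\cap\ell$ and $\pi_\sigma=\langle\ell,Q\rangle$. What your approach buys is independence from the earlier paper, using only maximality plus elementary incidence geometry in $\PG(3,q)$; what the citation buys the authors is brevity.

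Two steps in your write-up are compressed and need to be fleshed out. First, in the case $|A|\ge 2$, $|B|\ge 2$, your three-cell argument only treats the subcase in which some $\sigma$ lies in a ``fourth row''; the clean repair is to observe that two surviving cells can share neither a row (else every $\sigma$ would have $P_\sigma$ equal to that row, forcing $|A|=1$) nor a column (dually $|B|=1$), so three surviving cells occupy pairwise distinct rows and pairwise distinct columns, and then any single $\sigma$ already gives a contradiction: $P_\sigma$ can hit at most one of the three rows, while $\pi_\sigma$ cannot equal two distinct columns. Second, for the last clause of (b), knowing that the intersection of the induced crosses on the grid of $h$ has at most two cells is not enough, since the claim is weight exactly $1$. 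You need the sharper observation that the $q+1$ chambers $(P,\ell,\pi')\in\EKR$ induce crosses sharing the column $\langle h,P\rangle$ whose rows $\pi'\cap h$ run over all $q+1$ points of $h$, so their intersection is exactly that column, and dually the $q+1$ chambers $(P',\ell,\pi)$ induce crosses whose intersection is exactly the row $\pi\cap h$; combining forces the single cell $(\pi\cap h,\,h,\,\langle h,P\rangle)$, hence weight $1$. Both repairs are short and entirely in the spirit of your outline, so the proposal stands.
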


\begin{proof}
This is Proposition 2.3 and Notation 2.8 in \cite{heeringmetsch2023secondmax}.
 \end{proof}

 \begin{notation}
Let $\ell$ be a line of weight $<(q+1)^2$. If there is a plane $\tau$ such that all the $q+1$ chambers on $\ell$ and $\tau$ are in $\EKR$, we call $\ell$ a $\pi$-line.
If there is a point $Q$ such that all the $q+1$ chambers on $\ell$ and $Q$ are in $\EKR$, we call $\ell$ a $P$-line.
\end{notation}

 \begin{lemma} \label{L: requirements}
 \begin{enumerate}[\rm (a)]
 	\item A line of weight $2q+1$ is a $\pi$-line and a $P$-line.
 	\item No two $\pi$-lines are skew.
 	\item No two $P$-lines are skew.
 	\item There are at most two pairwise skew lines of weight $2$.
 	\item A line of weight $2$ meets every line of weight $>2$.
 \end{enumerate}
 \end{lemma}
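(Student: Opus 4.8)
The plan is to prove each of the five statements in Lemma \ref{L: requirements} by exploiting Lemma \ref{P: weight of lines}, which catalogues the possible weights of a line and the rigid structure forced on the $\EKR$-chambers in each case. Throughout I would repeatedly use the single governing principle that $\EKR$ contains no two opposite chambers; in $\PG(3,q)$ two chambers $(P,\ell,\pi)$ and $(Q,h,\tau)$ are opposite precisely when $\ell$ and $h$ are skew, $P\notin\tau$ and $Q\notin\pi$. So whenever two lines are skew, a collision is produced by choosing the point and plane of each chamber to avoid the other chamber's plane and point.

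For part (a), a line $\ell$ of weight $2q+1$ is handled directly by Lemma \ref{P: weight of lines}(b): that statement already exhibits a point $P$ and a plane $\pi$ on $\ell$ such that the $\EKR$-chambers through $\ell$ are exactly those containing $P$ or $\pi$. The $q+1$ chambers through $\ell$ and $\pi$ witness that $\ell$ is a $\pi$-line, and the $q+1$ chambers through $\ell$ and $P$ witness that $\ell$ is a $P$-line, giving the claim. For parts (b) and (c), suppose two $\pi$-lines $\ell$ and $h$ were skew, with associated planes $\pi$ and $\tau$. Since $\ell$ is a $\pi$-line, \emph{all} chambers on $\ell$ and $\pi$ lie in $\EKR$; likewise all chambers on $h$ and $\tau$ lie in $\EKR$. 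I would then pick a chamber $(P,\ell,\pi)\in\EKR$ and a chamber $(Q,h,\tau)\in\EKR$ with $P\notin\tau$ and $Q\notin\pi$, which is possible because $\ell\not\subseteq\tau$ and $h\not\subseteq\pi$ give enough freedom in choosing the points; this pair is opposite, a contradiction. Part (c) is the dual argument with points and planes interchanged, or one can invoke the duality of $\PG(3,q)$ directly.

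For part (d), I would argue that three pairwise skew lines of weight $2$ cannot coexist. The key input is Lemma \ref{P: weight of lines}(b): if $h$ is a line skew to a line $\ell$ of weight $2q+1$, then $h$ has weight $1$; but the contrapositive idea needed here is to show that any two weight-$2$ lines impose mutually incompatible local structure once a third skew one is added. Concretely, a weight-$2$ line $\ell$ carries exactly two $\EKR$-chambers, and I would analyse the positions of their points and planes relative to a transversal configuration; three pairwise skew lines of weight $2$ would force six chambers whose points and planes cannot simultaneously avoid all the opposition constraints, yielding an opposite pair. This counting-plus-incidence step is where I expect the main obstacle, since the weight-$2$ case gives the least structural rigidity and the argument must carefully track which of the two chambers on each line can be made opposite to a chamber on another line.

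For part (e), let $\ell$ have weight $2$ and let $h$ have weight greater than $2$, and suppose for contradiction that $\ell$ and $h$ are skew. If $h$ has weight $(q+1)^2$ then by Lemma \ref{P: weight of lines}(a) it meets the line of every $\EKR$-chamber, but $\ell$ itself is the line of an $\EKR$-chamber and is skew to $h$, a contradiction. If $h$ has weight $2q+1$, then by Lemma \ref{P: weight of lines}(b) every $\EKR$-chamber whose line is skew to $h$ has weight $1$; since $\ell$ has weight $2$ and is skew to $h$, this is impossible. If $h$ has weight $q+1$, I would use Lemma \ref{P: weight of lines}(c): in case (i) every $\EKR$-chamber $(Q,k,\tau)$ with $k$ skew to $h$ satisfies $Q\in\pi_h$ for the associated plane, and in case (ii) satisfies $P_h\in\tau$; applying this to the two $\EKR$-chambers on $\ell$ forces both of them to share a common point (case ii) or common plane (case i) dictated by $h$, and then picking one of them opposite to an appropriate $\EKR$-chamber through $h$ produces the contradiction. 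In all cases the skewness of $\ell$ and $h$ collides with the structural description forced by the larger weight of $h$, completing the proof.
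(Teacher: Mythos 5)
Your parts (a), (b) and (c) are sound. Part (a) indeed reads off Lemma \ref{P: weight of lines}(b). For (b), skewness of the two $\pi$-lines forces $\ell\cap\tau$ and $h\cap\pi$ to be single points (if, say, $\ell\subseteq\tau$ then $\ell$ and $h$ would both lie in $\tau$ and hence meet), so you may pick $P\in\ell\setminus\tau$ and $Q\in h\setminus\pi$ and obtain two opposite chambers of $\EKR$, a contradiction; (c) is dual. Be aware, however, that the paper does not prove any of this from first principles: its proof of Lemma \ref{L: requirements} is essentially a citation, deferring (b), (c) and (d) to Lemmas 2.9, 5.3 and 5.6 of \cite{heeringmetsch2023secondmax}, and it does not even address (e) explicitly.

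The genuine gaps in your proposal are in (d) and (e). For (d) you openly state that the ``counting-plus-incidence step'' is the main obstacle, so there is no proof there: the local non-opposition constraints (for each pair of skew lines and each pair of their chambers, one chamber's point must lie in the other's plane) do not visibly contradict one another for three pairwise skew weight-$2$ lines, and the paper's source for this fact, Lemma 5.6 of \cite{heeringmetsch2023secondmax}, is a substantive structural result about maximum EKR-sets, not a short oppositeness check. For (e), your subcases where $h$ has weight $(q+1)^2$ or $2q+1$ are correct, but the subcase of weight $q+1$ fails. First, you have the two conclusions swapped: in case (i) of Lemma \ref{P: weight of lines}(c) both chambers on $\ell$ acquire the same \emph{point}, namely $\ell\cap\pi$ (since $\ell\not\subseteq\pi$), and dually case (ii) forces a common plane $\langle \ell,P\rangle$. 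More importantly, your closing step --- ``picking one of them opposite to an appropriate $\EKR$-chamber through $h$'' --- cannot yield a contradiction: in case (i) every $\EKR$-chamber through $h$ has plane $\pi$, while both chambers on $\ell$ have their point in $\pi$, so no such pair is ever opposite; the structure forced by Lemma \ref{P: weight of lines}(c) is exactly what makes the skew configuration free of oppositions. A contradiction in this case requires additional input beyond the local argument you sketch --- ultimately the structural machinery of \cite{heeringmetsch2023secondmax} on which the paper itself relies.
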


\begin{proof}
Part (a) follows directly from Lemma \ref{P: weight of lines}. Part (b) and (c) are dual, and (b) follows from Lemma 2.9 in \cite{heeringmetsch2023secondmax}. 
Lemma 5.3 in \cite{heeringmetsch2023secondmax} implies (c)
and Lemma 5.6 in \cite{heeringmetsch2023secondmax} is part (d).
\end{proof}

We restate Theorem \ref{T: chambers of f_q^4} in projective language.

\begin{thm} \label{P: weight in spreads}
  Let $\EKR$ be a maximum EKR-set of chambers of $\PG(3,q)$. Then for any two chambers $(P_1,\ell_1,\pi_1)$ and $(P_2,\ell_2,\pi_2)$ in $\EKR$, we have that $\ell_1$ and $\ell_2$ are not skew.
\end{thm}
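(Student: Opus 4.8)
The plan is to argue by contradiction, using the spread antidesign of Corollary~\ref{C: spreads intersection size} together with the weight dichotomy recorded in Lemma~\ref{P: weight of lines} and Lemma~\ref{L: requirements}. Suppose $\ell_1$ and $\ell_2$ are skew and that each is the line of some chamber in $\EKR$. Any two skew lines of $\PG(3,q)$ lie in a common line spread (for instance, transport a skew pair of a Desarguesian spread by an element of $\mathrm{PGL}(4,q)$, which is transitive on ordered skew pairs), so I fix a line spread $S$ with $\ell_1,\ell_2\in S$. Writing $w(\ell)$ for the weight of $\ell$, every $\EKR$-chamber whose line lies in $S$ is counted exactly once by that line, so $v_S^\top\charvec_\EKR=\sum_{\ell\in S}w(\ell)$. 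Since $z_2(q)=q+1$, Corollary~\ref{C: spreads intersection size} with $t=1$ gives $\sum_{\ell\in S}w(\ell)=(q+1)^2$. Note that the lines of $S$ are pairwise skew and that $\ell_1,\ell_2\in S$ both have weight at least $1$.

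Next I would pin down the weight distribution on $S$. First, no line of $S$ has weight $(q+1)^2$: by Lemma~\ref{P: weight of lines}(a) such a line meets the line of every $\EKR$-chamber, but the lines of $S$ are pairwise skew, so it could only be $\ell_1$ or $\ell_2$, and neither meets the other. Because the lines of $S$ are pairwise skew, Lemma~\ref{L: requirements} yields at most one $\pi$-line, at most one $P$-line, at most two lines of weight $2$, and — crucially — that a weight-$2$ line and a line of weight $>2$ cannot both lie in $S$ (part (e) would force them to meet). If $S$ had no line of weight $>2$, then $\sum_{\ell\in S}w(\ell)\le 2\cdot 2+(q^2-1)\cdot 1=q^2+3<(q+1)^2$, a contradiction. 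Hence $S$ contains a line of weight $>2$, so no weight-$2$ line, and comparing with the total $(q+1)^2$ forces exactly one of: (A1) a single line $\ell^*$ of weight $2q+1$ with all remaining $q^2$ lines of $S$ of weight exactly $1$; or (A2) one $\pi$-line $m_\pi$ and one $P$-line $m_P$, both of weight $q+1$, with all remaining $q^2-1$ lines of $S$ of weight exactly $1$.

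The final and most delicate step is to eliminate (A1) and (A2). In (A1), Lemma~\ref{P: weight of lines}(b) supplies a point $P^*$ and a plane $\pi^*$ on $\ell^*$ (so $P^*\in\pi^*$) such that every $\EKR$-chamber $(Q,h,\tau)$ with $h$ skew to $\ell^*$ satisfies $Q\in\pi^*$ and $P^*\in\tau$; applied to the $q^2$ weight-$1$ lines $h\in S\setminus\{\ell^*\}$ this forces their unique chambers to be $(h\cap\pi^*,\,h,\,\langle h,P^*\rangle)$, and $h\mapsto h\cap\pi^*$ is a bijection onto $\pi^*\setminus\ell^*$. Two such chambers on skew lines $h,h'$ are non-opposite precisely when $Q,Q',P^*$ are collinear in $\pi^*$; imposing this for all pairs forces all $q^2$ points $h\cap\pi^*$ onto a single line through $P^*$, which is impossible since such a line meets $\pi^*\setminus\ell^*$ in only $q$ points. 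Case (A2) is handled identically once one shows that the special point $P_0$ of $m_P$ lies on the special plane $\pi_0$ of $m_\pi$: otherwise, pairing an $\EKR$-chamber $(R,m_\pi,\pi_0)$ with an $\EKR$-chamber $(P_0,m_P,\sigma)$ chosen with $R\notin\sigma$ yields an opposite pair inside $\EKR$ (using Lemma~\ref{P: weight of lines}(c)). With $P_0\in\pi_0$ the weight-$1$ chambers are again $(h\cap\pi_0,\,h,\,\langle h,P_0\rangle)$, and the same collinearity argument, now with the $q^2-1$ points together with $P_0$, gives a contradiction for all $q\ge 2$. I expect the main obstacle to be exactly this endgame — organising the forced weight-$1$ chambers and, in case (A2), establishing $P_0\in\pi_0$ — whereas the counting reduction is routine once the spread antidesign value is in hand.
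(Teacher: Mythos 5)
Your proposal is correct and follows essentially the same route as the paper: extend the two skew lines to a line spread, use the spread antidesign (Corollary~\ref{C: spreads intersection size}) to get total weight $(q+1)^2$, invoke Lemmas~\ref{P: weight of lines} and~\ref{L: requirements} to force exactly one $\pi$-line and one $P$-line (possibly equal) with all other spread lines of weight $1$, and derive a contradiction from the collinearity constraint on the forced weight-$1$ chambers. Your only deviations are cosmetic: you split the endgame into cases (A1)/(A2) where the paper treats them uniformly, and you prove $P_0\in\pi_0$ by exhibiting an opposite pair rather than by applying Lemma~\ref{P: weight of lines} directly to the chambers on the $P$-line, both of which are valid.
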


\begin{proof}
Assume that two lines of chambers of $\EKR$ are skew. Extend these two skew lines to a line-spread $S=\{\ell_0,\ldots,\ell_{q^2}\}$ of $\PG(3,q)$ and
let $L$ be the set of all chambers of $\PG(3,q)$ whose line is in $S$.
Let $v_S$ be a map from the set of chambers of $\PG(3,q)$ to $\mathbb{Q}$, a chamber gets mapped to $1$ if it is in $L$ and to $0$ if it is not in $L$.
Theorem \ref{thm:SpreadAntidesigns} yields that $v_S$ is an antidesign.
Corollary \ref{C: spreads intersection size} yields $|\EKR \cap L|=(q+1)^2$.\\
Any line $\ell_i\in S$ is skew to at least one line of a chamber in $\EKR$.
Therefore Lemma \ref{P: weight of lines} implies that $\ell_i$ has weight $0$, $1$, $2$, $q+1$, or $2q+1$. We denote the weight of $\ell_i$ as $w(\ell_i)$. 
We have
\begin{align} \label{A: Proof chambers 3d}
\sum\limits_{i=0}^{q^2} w(\ell_i)=|\EKR \cap L| =(q+1)^2.
\end{align}
Assume that there is no $\pi$-line in $S$. Then we have two options. There is a $P$-line in $S$, or there is no $P$-line in $S$. \\
If there is a $P$-line in $S$, Lemma \ref{L: requirements} implies that there is at most one $P$-line
in $S$ and no line of weight $2$ in $S$. Note that in this case the $P$-line has to have weight $q+1$, since there is no $\pi$-line in $S$. This implies 
$$
\sum\limits_{i=0}^{q^2} w(\ell_i)\leq (q+1)+\sum\limits_{i=0}^{q^2-1} 1<(q+1)^2
$$
which is a contradiction to (\ref{A: Proof chambers 3d}).\\
If there is no $P$-line in $S$, Lemma \ref{L: requirements} implies that all lines in $S$ have weight $1$, except at most two lines, which can have weight $2$. This implies 
$$
\sum\limits_{i=0}^{q^2} w(\ell_i)\leq 2+2+\sum\limits_{i=0}^{q^2-2} 1<(q+1)^2
$$
which also stands in contradiction to (\ref{A: Proof chambers 3d}).\\
We have shown that there is a $\pi$-line in $S$. Using duality and Lemma \ref{L: requirements} we get  that there is exactly one $\pi$-line in $S$ and exactly one $P$-line in $S$, note that the $\pi$-line and the $P$-line might be the same line. It follows from (\ref{A: Proof chambers 3d}) that all the lines in $S$ that are neither a $P$-line, nor a $\pi$-line have to have weight $1$.\\
From Lemma \ref{P: weight of lines} we know that for every $\pi$-line, there is a special plane, such that all chambers of $\EKR$, whose line is skew to the $\pi$-line, have their point in this special plane. For the $P$-line we get the dual statement. Let $\pi$ be the special plane associated with the $\pi$-line in $S$ and let $P$ be the special point associated with the $P$-line in $S$. Lemma \ref{P: weight of lines} implies $P\in\pi$.\\
There are at least $q^2-1$ lines $\ell_0,\ldots,\ell_{q^2-2}$ in $S$ that have weight one and that are skew to the $\pi$-line and the $P$-line. Let $(Q_i,\ell_i,\tau_i)$ be the chambers of $\EKR$ associated with $\ell_i$ for $i=0,\ldots,q^2-2$. Since the lines of $S$ are pairwise skew, Lemma \ref{P: weight of lines} implies that $Q_i\in\pi$ and $P\in \tau_i$ for all $i=0,\ldots,q^2-2$.
Let $h$ be a line incident with $P$ and $\pi$. There are $q$ points $\neq P$ on $h$. Since $q^2-1>q$ for all $q\geq 2$, there are two points (w.l.o.g.) $Q_0$ and $Q_1$ in $\pi$, such that $P$, $Q_0$ and $Q_1$ are not collinear. But since $P\in\tau_0$, we have $\tau_0\cap \pi=PQ_0$ and hence $Q_1\notin \tau_0$. Similarly, we get $Q_0\notin \tau_1$. Since $\ell_0$ and $\ell_1$ are skew, we get that the chambers $(Q_0,\ell_0,\tau_0)$ and $(Q_1,\ell_1,\tau_1)$ are opposite, which is a contradiction.
\end{proof}

\section*{Acknowledgement}
The second author was supported by the German Academic Exchange Service (DAAD) [Research Grants - Short-Term Grants, 2023].

\bibliographystyle{plainurl}
\bibliography{EKRs-and-Antidesigns_arXiv.bib}

\begin{thebibliography}{10}

\bibitem{k=3_Hilton_Milner}
A.~Blokhuis, A.E. Brouwer, and T.I. Szonyi.
\newblock {On the chromatic number of q-Kneser graphs}.
\newblock {\em Designs, Codes and Cryptography}, 65(3):187--197, 2012.
\newblock \href {https://doi.org/10.1007/s10623-011-9513-1} {\path{doi:10.1007/s10623-011-9513-1}}.

\bibitem{EKR_blocks_in_designs}
Maarten Boeck.
\newblock {The largest Erdős–Ko–Rado sets in 2-(v,k,1) designs}.
\newblock {\em Designs, Codes and Cryptography}, 75, 06 2014.
\newblock \href {https://doi.org/10.1007/s10623-014-9929-5} {\path{doi:10.1007/s10623-014-9929-5}}.

\bibitem{Intersection_theorems_in_permutation_groups}
P.~J. Cameron, M.~Deza, and P.~Frankl.
\newblock Intersection theorems in permutation groups.
\newblock {\em Combinatorica}, 8(3):249--260, 1988.
\newblock \href {https://doi.org/10.1007/BF02126798} {\path{doi:10.1007/BF02126798}}.

\bibitem{Cameron}
Peter~J. Cameron.
\newblock {\em Projective and polar spaces}, volume~13 of {\em QMW Maths Notes}.
\newblock Queen Mary and Westfield College, School of Mathematical Sciences, London, [1992].

\bibitem{Intersecting_families_of_permutations}
Peter~J. Cameron and C.~Y. Ku.
\newblock Intersecting families of permutations.
\newblock {\em Eur. J. Comb.}, 24(7):881--890, 2003.
\newblock \href {https://doi.org/10.1016/S0195-6698(03)00078-7} {\path{doi:10.1016/S0195-6698(03)00078-7}}.

\bibitem{PreprintJanSamKlaus}
Jan De~Beule, Sam Mattheus, and Klaus Metsch.
\newblock The eigenspace of the smallest eigenvalue of the chamber graph of some buildings.
\newblock {\em Preprint}.

\bibitem{AlgebraicApproach}
Jan De~Beule, Sam Mattheus, and Klaus Metsch.
\newblock An algebraic approach to {E}rdös-{K}o-{R}ado sets of flags in spherical buildings.
\newblock {\em J. Combin. Theory Ser. A}, 192:Paper No. 105657, 33, 2022.
\newblock \href {https://doi.org/10.1016/j.jcta.2022.105657} {\path{doi:10.1016/j.jcta.2022.105657}}.

\bibitem{Theorems_of_EKR_type_in_geometrical_settings}
Maarten De~Boeck and Leo Storme.
\newblock Theorems of {Erd{\H{o}}s}-{Ko}-{Rado} type in geometrical settings.
\newblock {\em Sci. China, Math.}, 56(7):1333--1348, 2013.
\newblock \href {https://doi.org/10.1007/s11425-013-4676-z} {\path{doi:10.1007/s11425-013-4676-z}}.

\bibitem{EllisSurvey3}
David Ellis.
\newblock Intersection problems in extremal combinatorics: theorems, techniques and questions old and new.
\newblock In {\em Surveys in combinatorics 2022}, volume 481 of {\em London Math. Soc. Lecture Note Ser.}, pages 115--173. Cambridge Univ. Press, Cambridge, 2022.

\bibitem{OriginalEKR}
P.~Erd\H{o}s, Chao Ko, and R.~Rado.
\newblock {Intersection theorems for systems of finite sets}.
\newblock {\em Quart. J. Math. Oxford Ser. (2)}, 12:313--320, 1961.
\newblock \href {https://doi.org/10.1093/qmath/12.1.313} {\path{doi:10.1093/qmath/12.1.313}}.

\bibitem{ekr_vectorspaces}
P.~Frankl and R.M. Wilson.
\newblock {The Erdös-Ko-Rado theorem for vector spaces}.
\newblock {\em Journal of Combinatorial Theory, Series A}, 43(2):228--236, 1986.
\newblock \href {https://doi.org/10.1016/0097-3165(86)90063-4} {\path{doi:10.1016/0097-3165(86)90063-4}}.

\bibitem{EKR_integer_sequences}
Peter Frankl and Zolt\'{a}n F\"{u}redi.
\newblock {The Erdös-Ko-Rado Theorem for Integer Sequences}.
\newblock {\em SIAM Journal on Algebraic Discrete Methods}, 1(4):376--381, 1980.
\newblock \href {https://doi.org/10.1137/0601044} {\path{doi:10.1137/0601044}}.

\bibitem{GodsilMeagher}
Chris Godsil and Karen Meagher.
\newblock {\em Erd\H{o}s-{K}o-{R}ado theorems: algebraic approaches}, volume 149 of {\em Cambridge Studies in Advanced Mathematics}.
\newblock Cambridge University Press, Cambridge, 2016.
\newblock \href {https://doi.org/10.1017/CBO9781316414958} {\path{doi:10.1017/CBO9781316414958}}.

\bibitem{heeringmetsch2023secondmax}
Philipp Heering and Klaus Metsch.
\newblock {Maximal cocliques and the chromatic number of the Kneser graph on chambers of PG(3,q)}.
\newblock {\em Journal of Combinatorial Designs}, 32(7):388--409, 2024.
\newblock URL: \url{https://onlinelibrary.wiley.com/doi/abs/10.1002/jcd.21940}, \href {https://doi.org/10.1002/jcd.21940} {\path{doi:10.1002/jcd.21940}}.

\bibitem{Hirschfeld}
J.~W.~P. Hirschfeld.
\newblock {\em Projective geometries over finite fields}.
\newblock Oxford Mathematical Monographs. The Clarendon Press, Oxford University Press, New York, second edition, 1998.

\bibitem{Hirschfeld&Thas}
J.~W.~P. Hirschfeld and J.~A. Thas.
\newblock {\em General {G}alois geometries}.
\newblock Oxford Mathematical Monographs. The Clarendon Press, Oxford University Press, New York, 1991.
\newblock Oxford Science Publications.

\bibitem{Ihringer_Hilton_Milner}
Ferdinand Ihringer.
\newblock {The Chromatic Number of the $q$-Kneser Graph for Large $q$}.
\newblock {\em The Electronic Journal of Combinatorics}, 26, 03 2019.
\newblock \href {https://doi.org/10.37236/7907} {\path{doi:10.37236/7907}}.

\bibitem{EKR_multisets}
Karen Meagher and Alison Purdy.
\newblock {An Erd{\"{o}}s-Ko-Rado Theorem for Multisets}.
\newblock {\em Electron. J. Comb.}, 18(1), 2011.
\newblock \href {https://doi.org/10.37236/707} {\path{doi:10.37236/707}}.

\bibitem{An_extension_of_the_EKR_theorem_to_t-designs}
B.~M.~I. Rands.
\newblock An extension of the {Erdoes}, {Ko}, {Rado} theorem to t-designs.
\newblock {\em J. Comb. Theory, Ser. A}, 32:391--395, 1982.
\newblock \href {https://doi.org/10.1016/0097-3165(82)90055-3} {\path{doi:10.1016/0097-3165(82)90055-3}}.

\end{thebibliography}

\end{document}